\definecolor{darkblue}{rgb}{0.0,0,0.7} 
\newcommand{\darkblue}{\color{darkblue}} 
\definecolor{darkred}{rgb}{0.7,0,0} 
\newcommand{\emp}[1]{\emph{\darkblue #1}} 
\newcommand{\ts}{\textsuperscript}
\newcommand\cB{{\mathcal B}}
\def\mb{\mathcal{B}_{2n}^{\mathrm{B,Mik}}}
\def\lt{\ell_{T}}
\def\ls{\ell_{S}}
\def\Bgnn{\mathcal{B}_{n+1}^{\mathrm{Mik}}}
\def\Bgn{\mathcal{B}_{n}^{\mathrm{Mik}}}
\def\Bpnn{\mathcal{B}_{n+1}^{\mathrm{per}}}
\def\bp{B(W)^{\mathrm{per}}}
\numberwithin{equation}{section}
\newtheorem{thm}[equation]{Theorem}
\newtheorem{theorem}[equation]{Theorem}
\newtheorem{prop}[equation]{Proposition}
\newtheorem{proposition}[equation]{Proposition}
\newtheorem{lem}[equation]{Lemma}
\newtheorem{corollary}[equation]{Corollary}
\newtheorem{conjecture}[equation]{Conjecture}
\theoremstyle{definition}
\newtheorem{defn}[equation]{Definition}
\newtheorem{definition}[equation]{Definition}
\newtheorem{rmq}[equation]{Remark}
\newtheorem{remark}[equation]{Remark}
\newtheorem{exple}[equation]{Example}
\newtheorem{problem}[equation]{Problem}
\def\resp{\mbox{\it resp}.\ }
\def\eg{\mbox{\it e.g.}}
\def\ie{\mbox{\it i.e}.}
\newcommand\inv{^{-1}}
\newcommand\lexp[2]{\kern\scriptspace\vphantom{#2}^{#1}\kern-\scriptspace#2}
\newcommand\ldiv{\mathbin{\preccurlyeq}}
\newcommand\DIV{\mathrm{Div}}
\newcommand\bc{{\boldsymbol c}}
\newcommand\bs{{\boldsymbol s}}
\newcommand\bt{{\boldsymbol t}}
\newcommand\bw{{\boldsymbol w}}
\newcommand\bx{{\boldsymbol x}}
\newcommand\bu{{\boldsymbol u}}
\newcommand\bv{{\boldsymbol v}}
\newcommand\by{{\boldsymbol y}}
\newcommand\bz{{\boldsymbol z}}
\newcommand\bD{{\boldsymbol D}}
\newcommand\bS{{\boldsymbol S}}
\newcommand\bT{{\boldsymbol T}}
\newcommand\bW{{\boldsymbol W}}
\newcommand\tl{{\mathrm{TL}_n}}
\newcommand\BZ{{\mathbb Z}}
\newcommand\BN{{\mathbb N}}
\DeclareMathOperator\rank{\mathrm{rank}}
\DeclareMathOperator\Dec{\mathrm{Dec}}
\title[Dual braid monoids, Mikado braids and positivity]{Dual braid monoids, Mikado braids and positivity in Hecke algebras}
\author{François Digne}
\address{LAMFA, Université de Picardie Jules Verne, 33 Rue Saint-Leu, 80039 Amiens Cedex 1, France.}
\email{digne@u-picardie.fr} 
\author{Thomas Gobet}
\address{TU Kaiserslautern, Fachbereich Mathematik, Postfach 3049, 67653 Kaiserslautern, Germany.}
\email{gobet@mathematik.uni-kl.de}
\begin{document}
\maketitle

\begin{abstract}
We study the rational permutation braids, that is the 
elements of an Artin-Tits group of spherical type which can be written $x^{-1} y$ where $x$ and $y$ are
prefixes of the Garside element of the braid monoid. We give a geometric characterization of these braids in
type $A_n$ and $B_n$ and then show that in spherical types different
from $D_n$ the simple elements of the dual braid monoid
(for arbitrary choice of Coxeter element) embedded in the braid group are
rational permutation braids (we conjecture this to hold also in type
$D_n$). This property implies positivity properties of the
polynomials arising in the linear expansion of their images in the Iwahori-Hecke algebra when expressed in the Kazhdan-Lusztig basis. In type $A_n$, it implies positivity properties of their images in the Temperley-Lieb algebra when expressed in the diagram basis.
\end{abstract}
\tableofcontents
\section{Introduction}

This paper is motivated by positivity properties arising when expanding simple dual braids in the canonical basis of the Iwahori-Hecke algebra. 

More precisely, a \emp{dual braid monoid} associated to a finite Coxeter
group and a choice of Coxeter element is a Garside monoid (\cite{DP}) whose group
of fractions is isomorphic
to the corresponding Artin-Tits group. The dual braid
monoids were introduced by Bessis \cite{Dual}, extending
definitions by Birman-Ko-Lee \cite{BKL} and Bessis-Michel and the first author
\cite{BDM}. As Garside monoids, they possess a finite set of \emp{simple elements}
which generates the whole monoid and forms a lattice under the left-divisibility order and they embed into their group of fractions.  Similar constructions have been made for some infinite Coxeter groups
and for complex reflection groups, but in this paper, unless explicitely
stated,  ``dual braid monoid''
will mean a dual braid monoid associated with
a finite Coxeter group.

In the framework of Artin-Tits groups, the standard example of a Garside
monoid is the braid or Artin-Tits monoid, also known as positive or classical
braid monoid. Its set of simple elements is the  canonical positive lift of
the Coxeter group in the Artin-Tits group. The embedding property in this
specific case was shown by Brieskorn-Saito (\cite{BS}) and by Deligne
(\cite{Del}). In the dual approach, the set of simple generators of the
Coxeter group is replaced in the spherical case by the whole set of
reflections, that is, by the conjugates of the simple generators. A
key tool in this approach is an action of the braid group on the set of reduced factorizations of a Coxeter element as a product of reflections, called the Hurwitz action. This action is known to be transitive (\cite{Dual}, \cite{BDSW}), providing an analogue in the dual approach of the Tits-Matsumoto property. 

The dual braid monoid is then an analogue to the classical braid monoid, but having as set of generators a copy of the set of reflections of the Coxeter group. In the classical setting, the lattice of simple elements is isomorphic to the lattice obtained by endowing the Coxeter group with the left weak order defined by the classical length function. By mimicking this approach in the dual setting, that is, by replacing the classical length function by the length function with respect to the whole set of reflections, one obtains an order called \emp{reflection} or \emp{absolute order} on the Coxeter group. The absolute order does not in general endow the Coxeter group with a lattice structure, but  by restricting such an order to the order ideal of a Coxeter element, one obtains a lattice (\cite{Dual}, \cite{BW}). The dual braid monoid is built using this property. Its lattice of simple elements is isomorphic to the order ideal of a given Coxeter element ordered by the restriction of the absolute order (\cite{Dual}).

There is a morphism from the Artin-Tits group to the group of invertible elements of the Iwahori-Hecke algebra attached to the corresponding Coxeter group. The images of the simple elements of the classical braid monoid through this morphism yield a basis of the Iwahori-Hecke algebra, the so-called \emp{standard basis}. It is therefore natural to ask about properties of the images of the simple elements of the dual Garside structure of an Artin-Tits group in the associated Iwahori-Hecke algebra.

In type $A_n$, it is known that their images in the Temperley-Lieb algebra
yield a basis (\cite{Z}, \cite{LL}, \cite{Vincenti}) such that the base change to the diagram basis is
triangular (\cite{Z}, \cite{GobTh}).

One of the main points of this paper consists of understanding how one can
express the simple dual braids in terms of the classical braid group
generators. Indeed, the isomorphism between the group of fractions of a dual
braid monoid and the Artin-Tits group is proven by a case-by-case analysis, and there is
no known general formula for the simple dual
braids in the braid group, even for the atoms. We recall the
construction of the classical and dual braid monoids in Sections \ref{sec:classicalbraidmonoid} and \ref{sec:dualbraidmonoid} where we also prove some new results on dual
Coxeter systems.  In particular we give a formula for the reflections in terms of the simple reflections (Proposition \ref{reflections in W}) which lifts well to the Artin-Tits group, allowing us to get a uniform formula for the dual atoms in
terms of the classical generators (Proposition \ref{prop:formula_dual_atoms}) and this with a case-free proof.

We then introduce in Section \ref{sec:good_permutation_braids} a finite set
of braids, which we call \emp{rational
permutation braids}. These braids already appear in a paper by
Dehornoy (\cite{Dehornoy}) in type $A_n$ and in unpublished work of Dyer (\cite{Dyernil}) for Artin-Tits groups attached to arbitrary (not necessarily finite) Coxeter groups. They may be
defined in spherical types as the braids of the form $\bx \by^{-1}$ where $\bx$ and $\by$ are
simple braids for the classical Garside structure. The images of such
braids in the Iwahori-Hecke algebra turn out to have positive Kazhdan-Lusztig
expansions; this is shown by Dyer-Lehrer (\cite{DL}) for finite Weyl groups using
perverse sheaves and by Dyer (\cite{D}) for arbitrary finite Coxeter groups using Soergel bimodules. These braids are closely related to the so-called \emp{mixed braid relations} introduced by Dyer (\cite{Dyernil}). 

We give uniform Garside-theoretic characterizations of the rational permutation braids in the spherical case in Proposition \ref{prop:gperm}, and then turn to a case-by-case analysis in types $A_n$ (Section \ref{sec:a}) and $B_n$ (Section \ref{sec:b}) to characterize them in terms of geometrical braids (Propositions \ref{prop:caract} and \ref{prop:caractb}). In these cases, they turn out to be what we call the \emp{Mikado braids}, that is, the braids where one can inductively
remove a strand which is above all the other strands. With this geometric
characterization, we can show that all the simple dual braids, for any choice
of Coxeter element, are rational permutation braids (Theorems
\ref{simplesmikadoa} and \ref{simplesmikadob})--the argument given here is
therefore topological. We also prove the same result for dihedral
groups and, by computer, for exceptional types (Theorem
\ref{simplesmikadoexc}); we conjecture the result to also hold in type $D_n$ (Conjecture \ref{conj:d}). 

It follows that the simple dual braids have a positive Kazhdan-Lusztig expansion in all types except possibly type $D_n$ (Theorem \ref{thm:positivity}). Positivity results in the Temperley-Lieb algebra (of type $A_n$) can also be derived (Theorem \ref{thm:positivitetl}).\\

\textbf{Acknowledgments}. We thank Patrick Dehornoy, Matthew Dyer and Jean
Michel for useful discussions. We also thank the anonymous referee for his careful reading of the manuscript and many interesting comments and remarks. 

\section{Classical braid monoid}\label{sec:classicalbraidmonoid}
Starting with a finite Coxeter System $(W,S)$,
one can define the associated Artin-Tits monoid or
\emp{classical braid monoid} $B^+(W)$ and for each choice of a Coxeter
element $c$ an associated \emp{dual braid monoid} $B^*_c(W)$
(\cite{Dual}). 

The classical braid monoid is defined by the following presentation:
$$
B^+(W)=\langle\bS\mid \underbrace{\bs_1\bs_2\ldots}_{m_{s_1,s_2}}=\underbrace
{\bs_2\bs_1\ldots}_{m_{s_1,s_2}} \text{ for } \bs_1,
\bs_2\in\bS\rangle^+$$
where $\bS$ is a set in bijection $\bs\mapsto s$ with $S$ and
$m_{s_1,s_2}$ is the order of $s_1s_2$ in $W$.

The monoid $B^+(W)$ is a \emp{Garside monoid} (see for instance \cite[I, 2.1]{DDGKM} for the definition), hence it satisfies the Ore conditions and  embeds in its group of fractions, the braid group $B(W)$ of $W$. This group has the same presentation as $B^+(W)$ but as a group:
$$B(W)=\langle\bS\mid \underbrace{\bs_1\bs_2\ldots}_{m_{s_1,s_2}}=\underbrace
{\bs_2\bs_1\ldots}_{m_{s_1,s_2}} \text{ for } \bs_1,
\bs_2\in\bS\rangle.$$

The group $W$ is a quotient of $B(W)$ by the normal subgroup generated by the
squares of the elements of $\bS$. We denote by $p:B(W)\rightarrow W$
this surjective morphism.
It has a set-wise section obtained by lifting $S$-shortest
expressions. We denote by $\bW$ the image of this section.
This set $\bW$ is the set of simples of the Garside monoid $B^+(W)$.
The Garside element of $B^+(W)$ is the lift $\Delta=\bw_0$ of the longest element
$w_0$ of $W$.

By definition of Garside monoids
the left- (\resp right-) divisibility gives a lattice structure to
$B^+(W)$ which restricts to a lattice structure on
$\bW$. Moreover by construction of $\bW$,
the map $p$ provides an isomorphism of lattices from $\bW$
to $W$ endowed with the Coxeter theoretic left- (\resp right-) weak order.

\section{Dual braid monoids}\label{sec:dualbraidmonoid} 

\subsection{Coxeter elements}
The dual braid monoids are defined using Coxeter elements. Before defining
these monoids we prove some properties of the Coxeter
elements that we will need.
\begin{definition} Let $(W,S)$ be a Coxeter system. An element $c\in W$ is a \emp{standard Coxeter
element of $(W,S)$} if it is a product of the elements of $S$ in some order. An element
$c\in W$ is a \emp{Coxeter element} if there exists $S'\subset T=\bigcup_{w\in W} wSw^{-1}$ such
that $(W,S')$ is a Coxeter system and $c$ is the product of the elements of $S'$ in some order,
that is, $c$ is a standard Coxeter element of $(W,S')$. The set $T$ is the set of \emp{reflections} of $(W,S)$.
\end{definition}
\begin{remark}
If $S'$ is as above then by \cite[Lemma 3.7]{muhl} one has $T=\bigcup_{w\in W} w S'
w^{-1}$. Moreover
if $W$ is finite and $S'$ is as above, then by \cite[Theorem 3.10]{muhl},
the Coxeter systems $(W,S)$ and $(W,S')$ are isomorphic. In particular, the type of $(W,S)$
depends only on $T$. If $(W,S)$ is infinite irreducible with no
infinite entry in its Coxeter matrix, a stronger property holds: any $S'$ such that
$(W,S')$ is a Coxeter system is conjugate to $S$ (\cite[Theorem 1]{FHM}).
The example of the dihedral group $I_2(5)$ with $S=\{s,t\}$ and
$S'=\{s,ststs\}$ shows that when $W$ is finite $S$ and $S'$ need not be
conjugate.
\end{remark}

\begin{definition} Let $(W,S)$ be a Coxeter system.
\begin{enumerate}

\item
We write $\ls$, \resp 
$\lt$, for the length function on $W$ with respect to $S$, \resp $T$.
\item We say that $x\in W$
\emp{divides} $y\in W$, written $x\ldiv_T y$, if $\lt(x\inv y)+\lt(x)=\lt(y)$. We write $\DIV(y):=\{x\in W\mid x\ldiv_T y\}$ and call the elements of $\DIV(y)$ the \emp{divisors} of $y$.
\end{enumerate}

Note that the definitions of $\ls$ and $\lt$ use the fact that
each one of the sets $S$ and $T$ positively generates $W$.
Note also that,
since $T$ is invariant under conjugation, the relation $\lt(x\inv y)+\lt(x)=\lt(y)$
is equivalent to  $\lt(yx\inv)+\lt(x)=\lt(y)$: there is no need to distinguish
between left- and right-divisibility.
\end{definition}
The following proposition shows how to
recover all reflections from a Coxeter element when $W$ is finite.
Recall that if $c$ is a Coxeter element in a Coxeter system of rank $n$,
one has $\lt(c)=n$ (see \eg, \cite[Lemma 1.2]{BDSW}). 
\begin{proposition}\label{reflections in W}
Let $(W,S)$ be a finite Coxeter system with $S=\{s_1,\ldots,s_n\}$. Let $c=s_1s_2\cdots s_n$. Then the set of reflections of $W$ is given by
$$\{c^ks_1s_2\ldots s_is_{i-1}\ldots s_1 c^{-k}\mid k\in\BN,\; 1\leq i\leq
n\}.$$
\end{proposition}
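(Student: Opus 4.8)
The plan is to reduce the statement to a fact about the action of the Coxeter element on the root system. Write $t_i := s_1\cdots s_{i-1}s_i s_{i-1}\cdots s_1 = (s_1\cdots s_{i-1})\,s_i\,(s_1\cdots s_{i-1})\inv$, so each $t_i$ is a conjugate of $s_i$, hence a reflection, and the set in the statement is $R := \{c^k t_i c^{-k}\mid k\in\BN,\ 1\le i\le n\}$. Since $T$ is stable under conjugation, $R\subseteq T$ is immediate; and $R$ is visibly stable under conjugation by $c$, so as $c$ has finite order $h$ (the Coxeter number, $c$ being a standard Coxeter element of $(W,S)$), only $k=0,\dots,h-1$ matter. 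The whole content is therefore the reverse inclusion $T\subseteq R$: the $nh$ reflections $c^k t_i c^{-k}$ must already exhaust $T$. As a sanity check worth recording, one verifies by induction that $t_j t_{j-1}\cdots t_1=s_1\cdots s_j$, so in particular $t_n t_{n-1}\cdots t_1=c$, exhibiting the $t_i$ as the reflections of a reduced reflection factorisation of $c$.

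I would pass to the reflection representation $V$ with root system $\Phi$, using the bijection $T\leftrightarrow\Phi^{+}$ that sends a reflection to its positive root. Setting $\rho_i := s_1\cdots s_{i-1}(\alpha_i)$, one has $c^k t_i c^{-k}=s_{c^k\rho_i}$, so $R$ corresponds to the set of lines $\{\,\mathbb{R}\,c^k\rho_i\,\}$. Two observations are cheap: each $\rho_i$ is a positive root, because $s_1\cdots s_n$ is reduced and so $s_1\cdots s_{i-1}$ does not invert $\alpha_i$; and $\rho_1,\dots,\rho_n$ are upper-triangular in the simple roots, hence linearly independent. The counting is now decisive: there are $|\Phi|=2N=nh$ roots, and exactly $nh$ vectors $c^k\rho_i$ (for $0\le k<h$, $1\le i\le n$). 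Thus $T\subseteq R$ is \emph{equivalent} to the assertion that these $nh$ vectors are pairwise distinct, i.e. run through all of $\Phi$ exactly once.

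The crux, and the step I expect to be the main obstacle, is precisely this distinctness. It is most transparently phrased as a crossing count along the word $(s_1\cdots s_n)^{h}=c^{h}$, which represents the identity and has length $nh$: in passing from the prefix $c^{k}$ to $c^{k+1}$ the $n$ roots crossed are exactly $c^{k}\rho_1,\dots,c^{k}\rho_n$, so distinctness is equivalent to saying that each positive root is crossed exactly twice over one period (once upward, once downward). This in turn is equivalent to the classical fact that a Coxeter element acts \emph{freely} on $\Phi$ with exactly $n$ orbits, each of size $h$, together with the statement that $\{\rho_1,\dots,\rho_n\}$ is a transversal of these orbits; I would obtain the free action uniformly from the Kostant--Steinberg eigenvalue picture (on the Coxeter plane $c$ is a rotation of order $h$ and every root projects nontrivially, so that even when a proper power $c^{m}$ fixes a nonzero vector of $V$—as happens when an exponent shares a factor with $h$—no such fixed vector is a root). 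Note that the orbit sums $\sum_{\beta\in O}\beta$ then vanish, since they are $c$-fixed and $c$ has no nonzero fixed vector, but the substantive point remains the transversal/crossing count. What makes this the real work is that the naive shortcut—hoping the length-$N$ prefix of $(s_1\cdots s_n)^{\infty}$ is a reduced word for $w_0$—is already false in type $A_3$, where that prefix is $c^{2}$ with $\ls(c^{2})=4<6=N$; so one genuinely needs the rotational structure of $c$ on $\Phi$, after which $T\subseteq R$ follows formally.
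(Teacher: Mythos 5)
Your reduction of the statement to the distinctness of the $nh$ vectors $c^k\rho_i$ ($0\le k<h$, $1\le i\le n$) is sound, and the Coxeter-plane argument does give that $c$ acts freely on $\Phi$, hence with exactly $n$ orbits of size $h$. But the proposal stops exactly at the point you yourself flag: nothing in it proves that $\rho_1,\dots,\rho_n$ lie in pairwise \emph{distinct} $c$-orbits. Free action does not rule out $\rho_j=c^m\rho_i$ for some $i\neq j$, $0<m<h$; in that case the vectors $c^k\rho_i$ would form a union of fewer than $n$ orbits (hit with repetitions) and the inclusion $T\subseteq R$ would fail. The crossing-count reformulation is not a substitute for a proof: a closed gallery crosses each wall an even number of times, so the count $nh=2|T|$ yields only that at most $|T|$ distinct walls are crossed, with equality if and only if the distinctness you want holds --- this is a restatement, not an argument. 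Moreover, for a non-bipartite standard Coxeter element this transversality is essentially the entire content of the proposition; your own type $A_3$ example (where $(s_1s_2s_3)^2$ is not reduced) shows why no soft argument along the word $(s_1\cdots s_n)^{\infty}$ can deliver it. So there is a genuine gap, and it sits precisely on the statement you label ``the substantive point''.

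What is missing is a mechanism to transport the result from a Coxeter element for which it can be established to the given one, and this is exactly how the paper proceeds: it first invokes the case of a Coxeter element whose relevant powers are reduced (e.g.\ a bipartite one), quoting the proof of \cite[3.9]{broue-michel} and \cite[Chapter V, \S 6 exercise 2]{bourbaki}, and then extends to an arbitrary standard Coxeter element by combining two observations: all standard Coxeter elements of $(W,S)$ are cyclically conjugate \cite[Theorem 3.1.4]{GP}, and for a cyclic shift $c'=s_{i+1}\cdots s_ns_1\cdots s_i$ one has $T_{c'}\supseteq (s_i\cdots s_2s_1)T_c(s_i\cdots s_2s_1)^{-1}$, hence $|T_{c'}|\geq |T_c|$; so once $|T_c|=|T|$ holds for one standard Coxeter element, it holds for all of them. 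Note that plain conjugacy of Coxeter elements --- which is all your Coxeter-plane facts require --- cannot play this role for transversality, because the roots $\rho_i$ are built from a reduced word of the specific element $c$, and conjugating does not relate them to the corresponding roots of $c'$. Either the cyclic-shift inclusion above, or a direct proof of your transversal statement, is what your proposal still needs in order to be a proof.
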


\begin{proof}
Let $T_c$ be the set of the statement and $T$ be the set of reflections of
$(W,S)$. We
have $T_c\subset T$. If $c$ is such that $\ls(c^i)=i\ls(c)$ for
any $i\leq |T|/h$, where $h=2|T|/n$ is the Coxeter number (the order of $c$),
then $T_c$ is equal to $T$
(this is a consequence of \eg, the proof of \cite[3.9]{broue-michel};
see also \cite[Chapter V, \S 6 exercise 2]{bourbaki} in the case where
$c$ is bipartite), whence the result holds in that case.
We now remark that if $c'=s_{i+1}\ldots
s_ns_1\ldots s_i$, with $1\leq i\leq n-1$ then $T_{c'}$ contains
$(s_i\ldots s_2s_1)T_c(s_i\ldots s_2s_1)^{-1}$ so has cardinality at least
$|T_c|$, whence the result holds: indeed, since $W$ is finite,
all standard Coxeter elements of $(W,S)$ are cyclically
conjugate (that is by a sequence of conjugations like the one deriving
$c'$ from $c$ above): see \eg, \cite[Theorem 3.1.4]{GP}.
\end{proof}

We take from \cite[Section 1]{BDSW} the second item of the following definition.
\begin{defn}\label{parabolic}
\begin{enumerate}
\item A \emp{parabolic subgroup} $W'$ of a Coxeter system $(W,S)$ is
a subgroup generated by a conjugate $S'$ of a subset of $S$.
We shall say that $(W',S')$ is a \emp{parabolic subsystem} of $(W,S)$.

\item Let $(W,S)$ be a Coxeter system with set of reflections $T$. Let $x\in W$.  Then $x$ is a \emp{parabolic Coxeter element} in $W$ if there exists a subset $S'=\{s'_1,\dots, s'_n\}\subset T$ such that $x=s'_1\cdots s'_m$ for some $m\leq n$ and $(W, S')$ is a Coxeter system. 
\end{enumerate}
\end{defn}

\begin{corollary}\label{coxeter parabolique}
Let $(W,S)$ be a finite Coxeter system and let $x\in W$. The following are equivalent:
\begin{enumerate}
\item There exists a Coxeter element $c\in W$ such that $x\ldiv_T c$. 
\item The element $x$ is a parabolic Coxeter element.
\end{enumerate}
\end{corollary}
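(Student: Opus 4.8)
The plan is to prove the equivalence $(1) \Leftrightarrow (2)$ in Corollary \ref{coxeter parabolique} by relating divisors of a Coxeter element to Coxeter elements of parabolic subsystems. Let me think about both directions.

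**Direction $(2) \Rightarrow (1)$:** Suppose $x$ is a parabolic Coxeter element, so there is $S' = \{s'_1, \dots, s'_n\} \subset T$ with $(W, S')$ a Coxeter system and $x = s'_1 \cdots s'_m$ for some $m \le n$. I want to find a Coxeter element $c$ with $x \ldiv_T c$. The natural candidate is $c = s'_1 \cdots s'_m s'_{m+1} \cdots s'_n$, a standard Coxeter element of $(W, S')$ — hence a Coxeter element of $W$ by definition. Then I need $\lt(x^{-1} c) + \lt(x) = \lt(c)$. Since $x = s'_1 \cdots s'_m$ is a product of $m$ reflections, $\lt(x) \le m$; and $x^{-1}c = s'_{m+1} \cdots s'_n$ is a product of $n-m$ reflections so $\lt(x^{-1}c) \le n-m$. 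Since $\lt(c) = n$ (it's a Coxeter element), and $\lt(c) \le \lt(x) + \lt(x^{-1}c) \le m + (n-m) = n$, all inequalities are equalities, giving $x \ldiv_T c$.

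**Direction $(1) \Rightarrow (2)$:** This is the harder direction and where I expect the main obstacle. Suppose $x \ldiv_T c$ for some Coxeter element $c$. By definition $c$ is a standard Coxeter element of some $(W, S')$ with $S' \subset T$; by the remark following the definition, $T$ is also the reflection set of $(W, S')$, so I may as well work inside $(W, S')$ and assume $c$ is a standard Coxeter element of $(W, S)$ itself, say $c = s_1 \cdots s_n$. I need to show that $x$, a $T$-divisor of $c$, is a parabolic Coxeter element — i.e. admits a reduced $T$-factorization $x = t_1 \cdots t_m$ where $\{t_1, \dots, t_m\}$ extends to a set $S''\subset T$ generating a parabolic subsystem with $x$ a standard Coxeter element. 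The key structural fact I would invoke is that for a Coxeter element $c$, the interval $\DIV(c) = [e,c]$ in the absolute order is exactly the lattice of noncrossing partitions / the simple elements of the dual monoid, and its elements are precisely the parabolic Coxeter elements. The cleanest route is a Hurwitz-action argument: since $x \ldiv_T c$, any reduced $T$-factorization of $x$ extends to a reduced $T$-factorization $x^{-1}c = t_{m+1}\cdots t_n$ giving a reduced factorization $c = t_1 \cdots t_m t_{m+1} \cdots t_n$ of $c$ into $n$ reflections. Because the Hurwitz action on reduced factorizations of $c$ is transitive (cited in the introduction, \cite{Dual}, \cite{BDSW}), the set $\{t_1, \dots, t_n\}$ generates $W$ and $(W, \{t_1,\dots,t_n\})$ is a Coxeter system (any reduced reflection factorization of a Coxeter element is itself a simple system, by transitivity of the Hurwitz action starting from $\{s_1, \dots, s_n\}$). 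Taking $S'' = \{t_1, \dots, t_n\}$ then exhibits $x = t_1 \cdots t_m$ as a standard Coxeter element of the parabolic subsystem generated by the first $m$ of these, hence a parabolic Coxeter element.

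The main obstacle is justifying that a reduced reflection factorization of a Coxeter element $c$ is itself a simple system for $W$ with reflection set $T$. This is where the transitivity of the Hurwitz action does the real work: starting from the factorization coming from $(W,S)$, every reduced factorization of $c$ is obtained by Hurwitz moves, and each Hurwitz move replaces a pair $(t_i, t_{i+1})$ by $(t_i t_{i+1} t_i, t_i)$, which visibly preserves the property of generating $W$ and (more delicately) the property of being a valid simple system. I would either cite this directly from \cite{Dual} (Bessis establishes that the atoms of the dual monoid are in bijection with such ``dual simple systems'') or give a short induction on the number of Hurwitz moves, checking that conjugating one generator by an adjacent one sends a Coxeter system to a Coxeter system with the same reflection set. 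Care must be taken that $x$ is a \emph{standard} Coxeter element of the parabolic it generates — i.e. that $t_1, \dots, t_m$ form a genuine simple system for that parabolic — which again follows because a sub-word $t_1 \cdots t_m$ of a reduced factorization is itself reduced and the corresponding reflections generate a parabolic subsystem of $(W, S'')$.
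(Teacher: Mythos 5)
Your direction $(2)\Rightarrow(1)$ is correct and agrees with what the paper leaves as ``immediate'': the subadditivity count $\lt(c)\le \lt(x)+\lt(x^{-1}c)\le m+(n-m)=n=\lt(c)$ is the standard way to spell it out. The gap is in $(1)\Rightarrow(2)$, at precisely the step you flagged as the main obstacle, and it is fatal as stated: it is \emph{false} that the underlying set of an arbitrary reduced $T$-factorization of a Coxeter element is a simple system, and false that a Hurwitz move sends a simple system to a simple system. Concretely, in $W=\mathfrak{S}_4$ take the Coxeter element $c=(1\,4\,3\,2)=(1\,4)(3\,4)(2\,3)$. The set $\{(1\,4),(3\,4),(2\,3)\}$ is a simple system (its diagram is a path of type $A_3$), but one Hurwitz move on the last two entries replaces $\bigl((3\,4),(2\,3)\bigr)$ by $\bigl((2\,4),(3\,4)\bigr)$, giving the reduced factorization $c=(1\,4)(2\,4)(3\,4)$; the set $\{(1\,4),(2\,4),(3\,4)\}$ generates $\mathfrak{S}_4$ but is \emph{not} a simple system, since all three pairwise products have order $3$, so the Coxeter group with that Coxeter matrix is the infinite affine group of type $\tilde A_2$, while $\mathfrak{S}_4$ is finite. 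Generation by the $t_i$ (which is true) is much weaker than $(W,\{t_1,\dots,t_n\})$ being a Coxeter system, which is what Definition \ref{parabolic}(2) demands. So your proposed ``short induction on the number of Hurwitz moves'' cannot be carried out, and transitivity of the Hurwitz action by itself gives no control over which factorizations are adapted to simple systems. Your fallback of citing \cite{Dual} also does not close the gap: as the remark after Corollary \ref{coxeter parabolique} in the paper explains, Bessis's notion of parabolic Coxeter element is a priori different from the one of \cite{BDSW} used here, and the equivalence of the two notions is essentially the content of this corollary, so that citation would be circular (or would leave the real work undone).

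For contrast, the paper's proof is built exactly to avoid this trap. It argues by reverse induction on $\lt(x)$, first treating $\lt(x)=n-1$, where $c=t_1x$: using the explicit description of the reflections in terms of $c$ (Proposition \ref{reflections in W}) it writes $t_1=\lexp{c^ks_1\cdots s_i}{s_{i+1}}$, then conjugates by $y=s_i\cdots s_1c^{-k}$ so that $\lexp{y}{c}$ becomes the cyclically shifted standard Coxeter element $s_{i+1}\cdots s_ns_1\cdots s_i$ and $\lexp{y}{t_1}=s_{i+1}$; hence $\lexp{y}{x}=s_{i+2}\cdots s_ns_1\cdots s_i$ is a standard Coxeter element of the standard parabolic subgroup generated by $S\setminus\{s_{i+1}\}$, and $x$ is a standard Coxeter element of the conjugate (hence genuine) parabolic subsystem $\lexp{y^{-1}}{(W_1,S_1)}$. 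The induction then continues inside that parabolic subsystem, using that its reflections are exactly the reflections of $W$ lying in it. The moral is that one must \emph{construct} a reduced factorization adapted to a simple system, by conjugation, rather than hope that every reduced factorization is so adapted.
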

\begin{proof}
Assume that $x\ldiv_T c$ for some Coxeter element $c$. Let
$S'=\{s_1,\ldots,s_n\}$ be a simple Coxeter system of $(W,S)$ such that
$c=s_1\ldots s_n$ and let $T$ be the set of reflections of $W$.
We argue by reverse induction on $\lt(x)$. First assume that
$\lt(x)=n-1$, so that there exists
a decomposition of $c$ into a product of $n$ reflections
$c=t_1\ldots t_n$ such that $x=t_2\ldots t_n$.
By the above proposition we have $t_1=\lexp{c^ks_1\ldots s_i}s_{i+1}$ for
some $k\in\BN$ and some $1\leq i\leq n-1$, where for two elements $a$ and $b$
in a group, we write $\lexp ab$ for $aba\inv$.
Let $y=s_i\ldots s_1c^{-k}$;
we have $\lexp y c=s_{i+1}\ldots s_ns_1\ldots s_i$ and $\lexp yt_1=s_{i+1}$.
Then $\lexp y x=s_{i+2}\ldots s_ns_1\ldots s_i$
is a standard Coxeter element in the standard parabolic subgroup $W_1$
generated by $S_1=\{s_1,\ldots, s_i,s_{i+2},\ldots s_n\})$ so that $x$ is
a standard Coxeter element in the parabolic subsystem $\lexp{y\inv}(W_1,S_1)$. Since a conjugate of a simple system is again a simple system, we deduce that $x$ is a parabolic Coxeter element.
Now the reflections of a parabolic
subgroup are precisely the reflections of $W$ which lie in this subgroup, so
that the induction can go on: a divisor of $x$ in $W$ is in the parabolic
subsystem of which $x$ is a standard Coxeter element. 
The converse is immediate.
\end{proof}
\begin{rmq} In his case, that is for finite Coxeter groups,
Bessis gives \cite[Section 1.4]{Dual} an alternative definition of parabolic
Coxeter elements and shows \cite[Lemma 1.4.3]{Dual} that an element $x$ of
$W$ is a parabolic Coxeter element in his sense if and only if there exists a
bipartite Coxeter element $c$ such that $x\ldiv_T c$. Since in finite
Coxeter groups any Coxeter element is conjugate to a bipartite one, one can
drop the word bipartite from this definition. This last fact and the above lemma show that Bessis approach and the one of \cite{BDSW},
that is of Definition \ref{parabolic} (2), actually lead to equivalent
definitions for finite Coxeter groups; this equivalence is not obvious if we compare the definitions from Bessis and \cite{BDSW}. 
 \end{rmq}

\subsection{Dual braid monoids}\label{sub:dbm}
The dual braid monoids are defined as follows: let $(W,S)$ be a Coxeter
system and let $T$ denote the set of
reflections of $(W,S)$.
Let $c$ be a standard Coxeter element.
The dual braid monoid $B_c^*$ associated to $c$ is defined by generators and relations as
follows. 
Let $\bD_c$ be a set in one-to-one correspondence with the
set $\DIV(c)$ of divisors of $c$. Then 
$B^*_c(W)$ is generated by $\bD_c$ with only relations $\bx\by=\bz$
for $\bx$, $\by$, $\bz$ in $\bD_c$ such that $xy=z\in\DIV(c)$ and
$\lt(z)=\lt(x)+\lt(y)$, where $x, y, z\in\DIV(c)$ are the elements
corresponding to $\bx,\by,\bz$ respectively.
The canonical bijection $\bD_c\rightarrow \DIV(c)$ extends to a
(surjective) morphism of monoids from $B^*_c(W)$ to $W$.  The above definition is valid for any Coxeter system but in the sequel
we study only dual braid monoids associated to finite Coxeter groups. 

In the spherical case the monoid $B^*_c(W)$ is a Garside monoid
with $\bD_c$ as its set of simples. In this case the monoid $B^*_c(W)$ has
a presentation with a generating set smaller
than $\bD_c$: first it can be shown that any reflection divides $c$ (see \cite[Lemma 1.3.3]{Dual}); if we denote by $\bT_c$ the lift of $T$ in $\bD_c\subset
B^*_c(W)$ we have
\begin{proposition}[\cite{Dual}, Theorem 2.1.4]\label{presentation of B*}
In the spherical case the dual braid monoid has the following presentation:
$$
B^*_c(W)=\langle\bT_c\mid \bt_1\bt_2=\bt_2\bt_3 \text{ for } \bt_1,\bt_2,\bt_3
\text{ in }\bT_c \text{ with } t_1t_2=t_2t_3\ldiv_T c\rangle^+.
$$
\end{proposition}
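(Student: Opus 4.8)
The plan is to realise the monoid $M$ defined by the presentation in the statement as isomorphic to $B^*_c(W)$, by constructing mutually inverse morphisms $\psi\colon M\to B^*_c(W)$ and $\phi\colon B^*_c(W)\to M$. Since every reflection divides $c$, we have $\bT_c\subseteq\bD_c$, and each defining relation $\bt_1\bt_2=\bt_2\bt_3$ of $M$ already holds in $B^*_c(W)$: when $t_1t_2=t_2t_3\ldiv_T c$ we have $\lt(t_1t_2)=2=\lt(t_1)+\lt(t_2)$ and likewise $\lt(t_2t_3)=\lt(t_2)+\lt(t_3)$, so both $\bt_1\bt_2$ and $\bt_2\bt_3$ equal the image of $t_1t_2$ under the canonical bijection. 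This produces $\psi$. To see that it is onto, I would take any $x\in\DIV(c)$ together with a reduced $T$-factorization $x=t_1\cdots t_k$ (so $k=\lt(x)$); iterating the defining relations of $B^*_c(W)$ gives $\bx=\bt_1\cdots\bt_k$, which lies in the image of $\psi$.

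For the reverse morphism I would define $\phi$ on the generating set $\bD_c$ by choosing, for each $x\in\DIV(c)$, a reduced $T$-factorization $x=t_1\cdots t_k$ and setting $\phi(\bx)=\bt_1\cdots\bt_k$. Everything then hinges on the following well-definedness lemma: \emph{for every $z\ldiv_T c$, any two reduced $T$-factorizations of $z$ represent the same element of $M$.} Granting it, $\phi(\bx)$ is independent of the chosen factorization, and $\phi$ respects the defining relations: if $xy=z\ldiv_T c$ with $\lt(z)=\lt(x)+\lt(y)$, then concatenating reduced factorizations of $x$ and of $y$ yields a reduced factorization of $z$, whence $\phi(\bx)\phi(\by)=\phi(\bz)$ in $M$. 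Finally $\psi\circ\phi$ and $\phi\circ\psi$ are the identity on generators---a reduced factorization of a single reflection is the reflection itself---so $\phi$ and $\psi$ are mutually inverse and $M\cong B^*_c(W)$.

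The real work, and the step I expect to be the main obstacle, is the well-definedness lemma. My approach is to link reduced $T$-factorizations by Hurwitz moves and to recognise each move as a single dual braid relation. First, if $z=t_1\cdots t_k$ is reduced, then every consecutive product satisfies $t_it_{i+1}\ldiv_T z\ldiv_T c$: writing $z$ as the product of the three blocks $t_1\cdots t_{i-1}$, $t_it_{i+1}$ and $t_{i+2}\cdots t_k$, their $T$-lengths are forced to add up to $\lt(z)$, so $t_it_{i+1}$ is a length-two divisor of $z$. An elementary Hurwitz move at position $i$ replaces $(t_i,t_{i+1})$ by $(\lexp{t_i}{t_{i+1}},t_i)$, and the induced change in $M$---replacing the factor $\bt_i\bt_{i+1}$ by the product of the $\bT_c$-generators attached to $\lexp{t_i}{t_{i+1}}$ and to $t_i$---is exactly the dual braid relation associated with the equality $t_it_{i+1}=(\lexp{t_i}{t_{i+1}})\,t_i\ldiv_T c$. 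By Corollary \ref{coxeter parabolique} every $z\ldiv_T c$ is a parabolic Coxeter element, hence a standard Coxeter element of a parabolic subsystem, so the Hurwitz action is transitive on its reduced $T$-factorizations (\cite{Dual}, \cite{BDSW}); combined with the previous observation this links any two reduced factorizations of $z$ by a chain of dual braid relations, proving the lemma. The one point requiring care is the bookkeeping ensuring that at each Hurwitz move the relevant length-two partial product still divides $c$, so that the corresponding relation of $M$ is genuinely available---this is precisely what the block-length computation guarantees.
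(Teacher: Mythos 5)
Your proof is correct and follows essentially the same route as the paper's: the paper likewise deduces the presentation from the transitivity of the Hurwitz action (Theorem \ref{hurwitz}) combined with Corollary \ref{coxeter parabolique}, observing that Hurwitz moves on reduced $T$-factorizations of divisors of $c$ are instances of the dual braid relations and that $\bT_c$ generates $B^*_c(W)$. Your two-morphism construction and explicit well-definedness lemma (with the block-length computation guaranteeing $t_it_{i+1}\ldiv_T c$) simply spell out the bookkeeping that the paper's sketch leaves implicit.
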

The relations in the above presentation
of $B^*_c(W)$ are called the \emp{dual braid relations}.  

\begin{proof}
We recall the proof. To this end, we need to recall a few facts on the Hurwitz action on sets of $T$-reduced decompositions of elements.

\subsection{Hurwitz action on reduced decompositions} 
We write $\mathcal{B}_n$ for the Artin braid group on $n$ strands, that is, the Artin-Tits group $B(W)$ where $W$ is of type $A_{n-1}$.

\begin{definition}[Hurwitz action]
Let $G$ be a group and $g$ be in $G$; the braid group $\mathcal{B}_n$ acts on the set
of $n$-tuples of elements of $G$ whose product is equal to $g$ as follows:
if $\sigma_1,\ldots,\sigma_{n-1}$ are the generators of the braid group $\mathcal{B}_n$,
the \emp{Hurwitz action} of
$\sigma_i$ maps a sequence $(t_1,\ldots,t_n)$  such that $g=t_1t_2\ldots t_n$
to the sequence
$(t_1,\ldots,t_{i-1},t_it_{i+1}t_i\inv,t_i,t_{i+2},\ldots,t_n)$.
\end{definition}
When considering the Hurwitz action, we will always denote the generators of $\mathcal{B}_n$ by
$\sigma_1,\dots, \sigma_{n-1}$. Otherwise we write the generators of $B(W)$ for $W$ of type
$A_{n-1}$ as $\bs_1,\dots,\bs_{n-1}$. In the case of reduced factorizations of a Coxeter element into reflections we have:

\begin{theorem}[\cite{BDSW}, Theorem 1.3]\label{hurwitz}
Let $(W,S)$ be a (not necessarily finite) Coxeter system with $T$ its set of reflections. 
If $x$ is a parabolic Coxeter element in $W$, then the Hurwitz action is transitive
on the set of decompositions of $x$ into a product of $\lt(x)$ reflections.
\end{theorem}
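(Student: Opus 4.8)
The plan is to argue by induction on $m:=\lt(x)$, and I would first fix a reference reduced decomposition. By Definition \ref{parabolic}(2), $x$ is a standard Coxeter element of a parabolic subsystem $(W',S')$ with $S'=\{s_1,\dots,s_m\}$ and $x=s_1\cdots s_m$; this is my reference. The preliminary step is a geometric lemma: every reflection occurring in an arbitrary reduced $T$-decomposition of $x$ already lies in $W'$ (its root lies in the $m$-dimensional support of $x$ for the standard geometric representation of $(W,S)$). Granting this, all reduced decompositions of $x$ take place inside $W'$, so I may replace $(W,S)$ by $(W',S')$ and assume that $x$ is a genuine Coxeter element of $(W,S)$ with $|S|=m$.

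The base cases $m\le 1$ are trivial. For $m=2$ the element $x$ is a Coxeter element of a dihedral parabolic subsystem $I_2(e)$ with $2\le e\le\infty$; since $\mathcal{B}_2$ is infinite cyclic generated by $\sigma_1$, one iterates $\sigma_1$ on a pair $(t_1,t_2)$ and checks directly that the orbit exhausts all two-reflection factorizations $x=rr'$ inside $I_2(e)$, the case $e=\infty$ included. For $m\ge 3$ it suffices to show that the Hurwitz orbit of any reduced decomposition contains one whose first entry is the fixed reflection $s_1$: once this is achieved, I strip $s_1$ off, the remaining tail is a reduced decomposition of the rank-$(m-1)$ parabolic Coxeter element $s_1x=s_2\cdots s_m$, and the subgroup $\langle\sigma_2,\dots,\sigma_{m-1}\rangle\cong\mathcal{B}_{m-1}$ acts on the tail exactly by the Hurwitz action, so the induction hypothesis applied to $s_1x$ matches the tail to $(s_2,\dots,s_m)$.

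The crux is therefore this first-entry statement. Write $F(\underline t)$ for the set of reflections occurring as first entry of some element in the orbit of $\underline t=(t_1,\dots,t_m)$, and $D$ for the set of reflections dividing $x$. Applying $\sigma_{i-1}^{-1},\dots,\sigma_2^{-1},\sigma_1^{-1}$ in turn brings the $i$-th entry unchanged to the front, so $\{t_1,\dots,t_m\}\subseteq F(\underline t)$. Two moves then enlarge $F(\underline t)$: the map $\sigma_1^{-1}$ sends $(r,r',\dots)$ to $(r',\lexp{r'}{r},\dots)$, promoting the second entry to the front; and by the induction hypothesis the tail following a first entry $r$ can be turned into any reduced decomposition of $rx$, hence its leading entry into any reflection dividing $rx$ (this uses that $rx$ is again a parabolic Coxeter element of rank $m-1$). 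Combining the two shows that $F(\underline t)$ is closed under the rule $r\rightsquigarrow r'$ whenever $r'\ldiv_T rx$. The whole theorem then reduces to the claim that the closure of $\{t_1,\dots,t_m\}$ under this rule equals $D$: granting it, every orbit satisfies $F=D$, so $s_1\in F(\underline t)$, and the induction closes.

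I expect the main obstacle to be precisely this connectivity claim in the infinite setting, together with its two supporting inputs: the support lemma of the first paragraph, and the infinite analogue of Corollary \ref{coxeter parabolique} asserting that divisors of a parabolic Coxeter element are again parabolic Coxeter elements, which is exactly what licenses the induction on the divisor $rx$. In finite type these become easy shortcuts: by \cite[Lemma 1.3.3]{Dual} every reflection divides $x$, so $D=T$, and the lattice structure of the noncrossing partitions $\DIV(c)$ yields connectivity at once. For infinite $W$ no lattice is available, $D$ is a proper subset of $T$, and the argument must be carried out purely through the length function $\lt$ and the geometric representation; establishing the divisor-closure property and reducing the connectivity of $\rightsquigarrow$ to the dihedral (rank-$2$) case is where the genuine work lies.
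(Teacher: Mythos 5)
The first thing to observe is that the paper contains no proof of this statement at all: Theorem \ref{hurwitz} is imported verbatim from \cite{BDSW} and used as a black box, so your proposal has to be judged as a standalone proof of a result about arbitrary (possibly infinite) Coxeter systems. On those terms it is not a proof but a reduction: the overall architecture (induct on $m=\lt(x)$, reduce transitivity to the claim that every Hurwitz orbit contains a decomposition with first entry $s_1$, strip that entry and let $\langle\sigma_2,\dots,\sigma_{m-1}\rangle$ act on the tail) is sound, and the pieces you actually verify (the rank-two case including $I_2(\infty)$, the promotion move $\sigma_1^{-1}$, the stripping step) are correct but routine. Everything hard is deferred, and you say so yourself in the last paragraph.

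Concretely, three claims carry the full weight of the theorem and none is established. (i) The support lemma: your one-line justification, that the root of each $t_i$ ``lies in the $m$-dimensional support of $x$'', is not available in infinite type. Already for $W=I_2(\infty)$ and $x=st$ one has $\lt(x)=2$ while $\mathrm{im}(x-1)$ is one-dimensional, so reflection length is not the dimension of the moved space and Carter-type root arguments do not transfer; the statement you need is true, but it requires the theory of reflection subgroups, not a dimension count. (ii) The divisor-closure property: to apply the induction hypothesis to $rx$ for an arbitrary first entry $r$, you need $rx$ to again be a parabolic Coxeter element. The paper proves this only for finite $W$ (Corollary \ref{coxeter parabolique}), by an argument resting on Proposition \ref{reflections in W}, which uses the Coxeter number, the finiteness of $T$ and cyclic conjugacy of Coxeter elements; in the general case the known route to this statement goes \emph{through} Hurwitz transitivity, so invoking it inside your induction is circular unless you give an independent argument. (iii) The connectivity claim for the rule $r\rightsquigarrow r'$ whenever $r'\ldiv_T rx$ is, as you note, essentially equivalent to the first-entry statement, i.e.\ to the theorem itself; in finite type it follows from the lattice property of $\DIV(c)$ (and from the fact that every reflection divides $c$, \cite[Lemma 1.3.3]{Dual}), but in the infinite setting it is exactly the content one must prove. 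In short, the proposal stops precisely where the proof of \cite{BDSW} begins, and the machinery that paper develops (reflection subgroups and their canonical generators) is what would be needed to fill these three gaps.
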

Note that for finite Coxeter groups, the assumption that $x$ is a parabolic Coxeter element can be replaced by $x\ldiv_T c$
for some Coxeter element $c$ of $(W,S)$ thanks to Corollary
\ref{coxeter parabolique}.

Let $c$ be a standard Coxeter element in a Coxeter system $(W,S)$ of rank $n$.
By definition of $B^*_c$, any decomposition $c=t_1\ldots t_n$ into a product of 
$n$ reflections is
lifted in $B^*_c(W)$ to $\bc=\bt_1\ldots \bt_n$ where $\bc$ is the lift of $c$
in $\bD_c$. Hence if $x$ divides $c$, writing $x=t_1\ldots t_{\lt(x)}$ with
$t_i\in T$ and $c=t_1\ldots t_n$, we see that the lift of $x$ is a 
product of $\lt(x)$ elements of $\bT_c$, so that $\bT_c$ generates $B^*_c(W)$.
Moreover by Theorem \ref{hurwitz} and Corollary \ref{coxeter parabolique}
one can pass from any decomposition of $\bx\in\bD_c$ to any other one by dual
braid relations. This gives Proposition \ref{presentation of B*}.
\end{proof}
\subsection{Groups of fractions of dual braid monoids are Artin-Tits groups}
The following is proved in \cite{Dual}:
\begin{proposition}\label{frac(B*)=B}
The group of fractions of $B^*_c(W)$ is isomorphic 
to B(W) and the restriction of the projection $p:B(W)\rightarrow W$ is the
canonical surjection of the dual braid monoid onto $W$.
\end{proposition}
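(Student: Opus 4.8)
Since $B^*_c(W)$ is a Garside monoid it satisfies the Ore conditions, so it embeds in its group of fractions $G:=\mathrm{Frac}(B^*_c(W))$, and as a group $G$ admits the presentation on the generating set $\bT_c$ subject to the dual braid relations of Proposition \ref{presentation of B*} (the enveloping group of a presented Ore monoid is given by the same presentation, read in the category of groups). The plan is to write down two mutually inverse homomorphisms $\phi\colon B(W)\to G$ and $\psi\colon G\to B(W)$ by comparing this presentation of $G$ with the braid presentation of $B(W)$ on $\bS$, and then to read off the compatibility with the surjections onto $W$. Everything reduces to checking that the defining relations of each group are satisfied by the images of the generators of the other.

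To build $\phi$, observe that each simple reflection $s_i$ is a reflection, hence lies in $\DIV(c)$, so it has a lift $\bs_i\in\bT_c\subset G$; set $\phi(\bs_i):=\bs_i$. One must verify that these atoms of $G$ satisfy the braid relation $\underbrace{\bs_i\bs_j\cdots}_{m_{s_i,s_j}}=\underbrace{\bs_j\bs_i\cdots}_{m_{s_i,s_j}}$ for each pair. The commuting case $m_{s_i,s_j}=2$ is the most transparent, coming directly from a dual braid relation inside the rank-two parabolic $\langle s_i,s_j\rangle$; for $m_{s_i,s_j}\geq 3$ the relation has to be assembled from the chain of divisors of the Coxeter element of that rank-two parabolic, and it is precisely here that the verification becomes sensitive to the type.

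For $\psi$ I would use the uniform atom formula of Proposition \ref{prop:formula_dual_atoms}, which rests on Proposition \ref{reflections in W}. Writing a reflection as $t=\lexp{(c^ks_1\cdots s_{i-1})}{s_i}$, define $\hat t:=\lexp{((\bs_1\cdots\bs_n)^k\bs_1\cdots\bs_{i-1})}{\bs_i}\in B(W)$ and set $\psi(\bt):=\hat t$ on the atoms. The content of this step is twofold: that $\hat t$ depends only on $t$ and not on the chosen pair $(k,i)$, and that whenever $t_1t_2=t_2t_3\ldiv_T c$ one has $\hat t_1\hat t_2=\hat t_2\hat t_3$ in $B(W)$, i.e.\ that the dual braid relations survive. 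Since the dual braid relations correspond to the elementary Hurwitz moves on reduced decompositions of $c$, this reduces, via Theorem \ref{hurwitz}, to a finite list of local identities checked inside rank-two and rank-three parabolic subsystems. As $B^*_c(W)$ is an Ore monoid, the resulting monoid homomorphism into the group $B(W)$ extends uniquely to $\psi\colon G\to B(W)$.

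Finally I would check that $\phi$ and $\psi$ are mutually inverse on generators: $\psi(\phi(\bs_i))=\hat{s_i}$, and a short computation with the braid relations of $B(W)$ yields $\hat{s_i}=\bs_i$ (already in type $A_2$ one has $\hat{s_2}=\bs_1\bs_2\bs_1\bs_2\inv\bs_1\inv=\bs_2$), while $\phi(\psi(\bt))=\phi(\hat t)$ is obtained by substituting the dual atoms $\bs_i$ into the word $\hat t$ and is identified with $\bt$ using the dual relations; hence $\psi\phi=\mathrm{id}$ and $\phi\psi=\mathrm{id}$. For the second assertion, both $p\circ\psi$ and the canonical surjection $B^*_c(W)\to W$ are monoid homomorphisms to $W$ which agree on every atom, since $p(\hat t)=t$ by construction, so they coincide. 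The main obstacle is the pair of relation checks in the two middle steps: there is no known case-free verification that the simple-reflection atoms satisfy the braid relations (equivalently, that the reflection lifts satisfy the dual braid relations), and this is exactly the point at which \cite{Dual} proceeds type by type.
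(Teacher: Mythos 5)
Your two-morphism architecture is exactly that of the paper's sketch, but you have attributed the difficulty to the wrong half, and this is not a cosmetic slip. The verification that the atoms of $B^*_c(W)$ lifting the simple reflections satisfy the Artin braid relations (your $\phi$) is uniform, not ``sensitive to the type''. Indeed, for $i<j$ one has $s_is_j\ldiv_T c$ (an easy induction on the rank, using that $s_1^{-1}c=s_2\cdots s_n$ is a Coxeter element of the standard parabolic it generates), and every reflection of the dihedral parabolic $\langle s_i,s_j\rangle$ divides $c$ (\cite[Lemma 1.3.3]{Dual}, recalled in Section \ref{sub:dbm}); hence all dual braid relations coming from length-additive factorizations of $s_is_j$ are among the defining relations of $B^*_c(W)$. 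These form a single cyclic chain $\bt_1\bt_2=\bt_2\bt_3=\cdots=\bt_m\bt_1$ with $t_1=s_i$, $t_2=s_j$, $m=m_{s_i,s_j}$, and from such a chain the Artin relation on $\bt_1,\bt_2$ follows by a computation uniform in $m$: setting $\delta=\bt_1\bt_2$, the chain gives $\delta\bt_{k+1}=\bt_{k-1}\delta$ for all $k$ (indices mod $m$), and iterating this identity converts one side of the Artin relation into the other. This is precisely the paper's ``easy computation''. The genuinely case-by-case step is the one inside your $\psi$: that the lifts $\hat t\in B(W)$ of the reflections satisfy the dual braid relations, which is \cite[Fact 2.2.4]{Dual}. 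Your parenthetical claim that the two checks are ``equivalent'' is therefore false, and your proposed reduction of the $\psi$-check ``via Theorem \ref{hurwitz} to rank-two and rank-three parabolic subsystems'' does not work: Hurwitz transitivity moves between decompositions of $c$, but says nothing about verifying identities among the elements $\hat t$, whose very definition involves conjugation by powers of $\bc$ rather than data internal to a low-rank parabolic.

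There is a second concrete gap in $\psi$. Bessis's case-by-case verification is carried out for a \emph{bipartite} Coxeter element, whereas you define $\hat t$ directly for an arbitrary standard $c$; the citation therefore does not cover your construction. The paper bridges exactly this gap with an argument you omit: the lifts in $B^+(W)$ of all standard Coxeter elements of $(W,S)$ are conjugate in $B(W)$, and the Hurwitz action commutes with conjugation, so the bipartite case transports to every standard $c$. Relatedly, the well-definedness of $t\mapsto\hat t$ (independence of the pair $(k,i)$), which you rightly flag, cannot be obtained from Proposition \ref{prop:formula_dual_atoms}: that statement and its proof are formulated ``through the embedding of $B^*_c(W)$ into $B(W)$'' (they rest on Corollary \ref{hurwitz in B}), i.e.\ they presuppose the proposition you are proving, so invoking it here is circular; only Proposition \ref{reflections in W}, a statement purely about $W$, is available at this stage. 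Finally, once $\psi\circ\phi=\mathrm{id}$ is known, your check of $\phi\circ\psi$ on atoms is unnecessary if you instead prove, as the paper does, that $\phi$ is surjective (by Theorem \ref{hurwitz}, every element of $\bT_c$ is obtained from the $\bs'_i$ by dual braid relations, hence lies in the image); injectivity plus surjectivity then gives the isomorphism. With these repairs---the uniform dihedral computation for $\phi$, and Bessis's bipartite verification plus the conjugation transport for $\psi$---your outline becomes the paper's proof.
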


We explain how this isomorphism is defined and sketch a proof that it is an
isomorphism. 
\begin{proof}[Sketch of proof]
Let us enumerate the elements $s_1,\ldots,s_n$ of $S$
in such a way that $c=s_1\ldots s_n$. 
We have $S\subset T$ and an easy computation using the dual braid relations
shows that the lift $\bS'$ of $S$ in $\bT_c\subset B_c^*(W)$ satisfies the braid relations. 
This gives a morphism of monoids $B^+(W)\rightarrow B^*_c(W)$ mapping $\bs_i$ to
$\bs'_i$ where $\bs'_i$ is the lift of $s_i$ in $\bS'$. This extends to
a group morphism from $B(W)$ to the group of fractions of $B^*_c(W)$.

Since $c=s_1\ldots s_n$, whence $\bc=\bs'_1\ldots\bs'_n$,
Theorem \ref{hurwitz} implies that all elements of $\bT_c$ can be obtained from
$\bS'$ using dual braid relations, so that $\bS'$ generates the group of
fractions of $B^*_c(W)$, hence
the above morphism from $B(W)$ to this group is surjective.

To show that it is injective one first considers the particular case of a
Coxeter element $c$ which is bipartite, that is, such that either for
$m=\lfloor n/2\rfloor$ or for $m=\lfloor (n+1)/2\rfloor$  the elements
$s_1$, $s_2$, \dots, $s_m$ pairwise
commute and so do $s_{m+1}$, \dots, $s_n$. 
Bessis \cite{Dual} proves that
in this case the set $\cup_{k\in\BZ}\bc^k\bS\bc^{-k}$ is a lift in
$B(W)$ of $T$ and that its elements satisfy the dual braid relations (this last
fact by a case by case analysis, see \cite[Fact 2.2.4]{Dual}). Hence we get a
morphism in the other direction such that the composition from $B(W)$ to
itself is the identity (since it is the identity on $\bS$).

To get the result for an arbitrary standard Coxeter element, one can use the fact that
the lifts in $B^+(W)$ of all Coxeter elements of a given Coxeter system
are conjugate in $B(W)$ (see
\cite[Chapter V, \S 6 Lemme 1]{bourbaki}) and
that the Hurwitz action commutes with conjugation.
\end{proof}

We will now identify $B_c^*(W)$ with its image in $B(W)$ and $\bS'$ with $\bS$. We
then see $\bT_c$ as a subset of $B(W)$. Using this identification, we note
that, by the above proof, the lift of the Coxeter element $c=s_1\ldots s_n$ in $B_c^*(W)$ is the same
as its lift to the classical braid monoid, that is
$\bc=\bs_1\ldots\bs_n\in\bW$. Theorem \ref{hurwitz} implies:
\begin{corollary}\label{hurwitz in B} The 
set of decompositions of $\bc$ into a product of $\rank(W)$
elements of $\bT_c$ in $B(W)$ 
is a single orbit under the Hurwitz action and the
restriction of $\pi:B(W)\rightarrow W$
to $\bT_c$ induces a bijection from this Hurwitz orbit to  
the set of decompositions of $c$ into $\rank(W)$ reflections.
\end{corollary}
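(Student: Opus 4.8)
The plan is to transport the transitivity of the Hurwitz action from $W$ up to $B(W)$ along the projection $\pi$, using that the lifted Hurwitz moves are exactly the dual braid relations. First I record the elementary bijection underlying everything: since every reflection divides $c$, the restriction of $\pi$ to $\bT_c$ is a bijection onto $T$ (surjective because each $t\in T$ has a lift in $\bT_c$, injective because distinct reflections have distinct lifts). Applying $\pi$ coordinatewise therefore yields a bijection $\Pi\colon\bT_c^{\,n}\to T^{\,n}$ on $n$-tuples, where $n=\rank(W)$.

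The key step is a compatibility lemma for a single Hurwitz move. Let $(\bt_1,\dots,\bt_n)\in\bT_c^{\,n}$ with $\bt_1\cdots\bt_n=\bc$, and write $t_j=\pi(\bt_j)$, so that $t_1\cdots t_n=c$ is a reduced $T$-decomposition. I claim that the Hurwitz move $\sigma_i$ sends $(\bt_1,\dots,\bt_n)$ to a tuple again lying in $\bT_c^{\,n}$, still with product $\bc$, and that $\Pi$ intertwines this with the Hurwitz move $\sigma_i$ on $(t_1,\dots,t_n)$. Only the entries in positions $i,i+1$ change, from $(\bt_i,\bt_{i+1})$ to $(\bt_i\bt_{i+1}\bt_i\inv,\bt_i)$, so the product is unchanged; the point is that $\bt_i\bt_{i+1}\bt_i\inv$ lies in $\bT_c$. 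This is where I invoke the dual braid relations (Proposition \ref{presentation of B*}): writing $u=t_it_{i+1}t_i\inv\in T$ and $\bu\in\bT_c$ for its lift, one has the relation $\bt_i\bt_{i+1}=\bu\,\bt_i$ in $B^*_c(W)\subset B(W)$, whence $\bt_i\bt_{i+1}\bt_i\inv=\bu\in\bT_c$. The relation applies because $t_it_{i+1}\ldiv_T c$: the initial product $t_1\cdots t_{i-1}$ divides $c$, so (there being no distinction between left and right divisibility, as $T$ is conjugation invariant) $t_i\cdots t_n=(t_1\cdots t_{i-1})\inv c$ also divides $c$, its own initial product $t_it_{i+1}$ divides it, and $\ldiv_T$ is transitive. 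This is the one genuinely technical verification.

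With this compatibility in hand the two assertions follow quickly. For the single-orbit statement, take two decompositions $\bc=\bt_1\cdots\bt_n=\bt_1'\cdots\bt_n'$ with all factors in $\bT_c$. Their $\pi$-images are reduced $T$-decompositions of $c$, hence are linked by some $\beta\in\mathcal{B}_n$ via Theorem \ref{hurwitz} (applicable since $c$ is a Coxeter element). Applying the Hurwitz action of $\beta$ to $(\bt_1,\dots,\bt_n)$ in $B(W)$ keeps us in $\bT_c^{\,n}$ by the lemma and, by $\Pi$-equivariance, lands on a tuple whose $\Pi$-image equals $(t_1',\dots,t_n')=\Pi(\bt_1',\dots,\bt_n')$; injectivity of $\Pi$ then forces $\beta\cdot(\bt_1,\dots,\bt_n)=(\bt_1',\dots,\bt_n')$, so the decompositions of $\bc$ into $\bT_c$ form one Hurwitz orbit. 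For the bijection, $\Pi$ restricts to an injection of this orbit into the decompositions of $c$ into $n$ reflections; it is surjective because, given any such decomposition $c=u_1\cdots u_n$, Theorem \ref{hurwitz} links it to $(s_1,\dots,s_n)$, and lifting the connecting braid from $\bc=\bs_1\cdots\bs_n$ via the lemma produces a preimage in the orbit.

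The main obstacle is the compatibility lemma, and within it the verification that adjacent products $t_it_{i+1}$ of a reduced $T$-decomposition of $c$ divide $c$, so that the dual braid relation is available. Once this is settled, the transport of transitivity and the bijection are formal consequences of Theorem \ref{hurwitz} together with the coordinatewise bijection $\Pi$.
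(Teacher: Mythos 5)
Your proof is correct and follows essentially the same route as the paper's: the Hurwitz action upstairs preserves decompositions of $\bc$ into elements of $\bT_c$ because each move is implemented by a dual braid relation (your divisibility check $t_it_{i+1}\ldiv_T c$ makes explicit what the paper leaves implicit), and the coordinatewise bijection induced by $\pi$ is Hurwitz-equivariant, so the transitivity of Theorem \ref{hurwitz} downstairs transports up. The only cosmetic difference is that you obtain surjectivity onto $\Dec_n(c)$ by lifting a Hurwitz path from the standard decomposition $(\bs_1,\dots,\bs_n)$, whereas the paper invokes directly that, by definition of $B_c^*(W)$, every reduced $T$-decomposition of $c$ lifts to a decomposition of $\bc$.
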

\noindent We recall the proof for sake of completeness.
\begin{proof} Let $\Dec_n(c)$ (\resp $\Dec_n(\bc)$) be the set
of decomposition of $c$ (\resp $\bc$) into $n$ reflections (\resp into $n$
elements of $\bT_c$) where $n=\rank(W)$.
The morphism $\pi$ bijectively maps $\bT_c$ to $T$, hence maps $\Dec_n(\bc)$ to
$\Dec_n(c)$.
By definition of $B_c^*(W)$ a decomposition $(t_1,\ldots ,t_n)\in\Dec_n(c)$
is lifted in $B(W)$ to $(\bt_1,\ldots, \bt_n)\in\Dec_n(\bc)$.
Hence $\pi$ induces a bijection from $\Dec_n(\bc)$ to $\Dec_n(c)$.

By the dual braid relations, the Hurwitz action in $B(W)$
preserves $\Dec_n(\bc)$.
Since $\pi$ is compatible with the Hurwitz action and
by Theorem \ref{hurwitz} $\Dec_n(c)$ is a single orbit, we 
get the result.
\end{proof}

A question is then to understand which elements of $B(W)$ correspond to the
subset $\bD_c$ of $B_c^*(W)$ through the embedding of $B_c^*(W)$ into $B(W)$.
This may be of interest for both computational reasons and for a possible
case-free proof of Proposition \ref{frac(B*)=B}.

We will address this question in Sections \ref{sec:a} and \ref{sec:b} when $W$ is of type $A_n$ or $B_n$. In the case of reflections the following subsection gives an answer.
\subsection{A formula for dual atoms}

The following proposition is the analog
for the braid monoid of Proposition \ref{reflections in W}
and the proof is parallel. 

\begin{proposition}\label{prop:formula_dual_atoms}
Let $c=s_1\ldots s_n$ be a standard Coxeter element,
then, through the embedding of $B_c^*(W)$ into $B(W)$, we have
$$\bT_c=\{
\bs_1\bs_2\ldots\bs_i\bs_{i+1}\bs_i\inv\bs_{i-1}\inv\ldots\bs_1\inv,
\mid 0\leq i< 2|T|\},$$ where the index $i$ in $\bs_i$ is taken modulo $n$.
\end{proposition}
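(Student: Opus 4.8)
The plan is to prove the two inclusions separately, the key structural input being the braid-group analogue of the conjugation that appears in Proposition \ref{reflections in W}. Write $\bt_i=\bs_1\bs_2\cdots\bs_i\bs_{i+1}\bs_i\inv\cdots\bs_1\inv$ for the element indexed by $i$ (indices mod $n$). Since the first $n$ of the $i+n$ factors of $\bs_1\cdots\bs_{i+n}$ collapse to $\bc$ and the remaining $i$ factors are again $\bs_1\cdots\bs_i$, one has $\bs_1\cdots\bs_{i+n}=\bc\,\bs_1\cdots\bs_i$, whence the shift identity $\bt_{i+n}=\bc\,\bt_i\,\bc\inv$, and more generally $\bt_{kn+j}=\bc^{k}\bt_j\bc^{-k}$ for $0\le j<n$. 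This reduces the whole statement to the base range $0\le j<n$ together with the behaviour of $\bT_c$ under conjugation by $\bc$.

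First I would show that $\bt_j\in\bT_c$ for $0\le j<n$, by exhibiting $\bt_j$ as the first letter of a decomposition of $\bc$ into $n$ elements of $\bT_c$. Starting from the standard decomposition $(\bs_1,\ldots,\bs_n)$ and applying the Hurwitz moves $\sigma_j,\sigma_{j-1},\ldots,\sigma_1$ in succession slides a conjugate of $\bs_{j+1}$ to the front, where it becomes exactly $\bs_1\cdots\bs_j\bs_{j+1}\bs_j\inv\cdots\bs_1\inv=\bt_j$. By Corollary \ref{hurwitz in B} every letter of every Hurwitz transform of $(\bs_1,\ldots,\bs_n)$ lies in $\bT_c$, so $\bt_j\in\bT_c$. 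Next I would check that conjugation by $\bc$ preserves $\bT_c$: given a decomposition $\bc=\bt_{i_1}\cdots\bt_{i_n}$, a direct computation shows that the rotated tuple $(\bc\,\bt_{i_n}\,\bc\inv,\bt_{i_1},\ldots,\bt_{i_{n-1}})$ is again a decomposition of $\bc$, so $\bc\,\bt_{i_n}\,\bc\inv\in\bT_c$; since every element of $\bT_c$ occurs as a last letter of some decomposition of $\bc$, the map $x\mapsto\bc\,x\,\bc\inv$ sends $\bT_c$ into itself. Combined with the shift identity this yields $\bt_{kn+j}=\bc^{k}\bt_j\bc^{-k}\in\bT_c$ for all $k\ge0$, i.e.\ the inclusion $\{\bt_i\mid 0\le i<2|T|\}\subseteq\bT_c$.

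For the reverse inclusion I would argue by a projection/counting argument rather than re-running the Hurwitz analysis. By Corollary \ref{hurwitz in B} the projection $p$ restricts to a bijection $\bT_c\to T$, so it suffices to see that $\{p(\bt_i)\mid 0\le i<2|T|\}=T$. One computes $p(\bt_{kn+j})=c^{k}\bigl(s_1\cdots s_j s_{j+1} s_j\cdots s_1\bigr)c^{-k}$, and as $i=kn+j$ ranges over $0\le i<2|T|=nh$ the pair $(k,j)$ ranges over $0\le k<h$, $0\le j<n$, where $h$ is the Coxeter number (the order of $c$). Since $c^{h}=1$ in $W$, Proposition \ref{reflections in W} shows these images are precisely all of $T$. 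As $p$ is injective on $\bT_c$ and the set $\{\bt_i\}\subseteq\bT_c$ already surjects onto $T$ under $p$, the two sets have the same cardinality $|T|$ and hence coincide.

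The main obstacle is the second paragraph: the content is not that $\bt_i$ projects to a reflection (which is clear) but that it is the \emph{correct} lift, namely the one lying in $\bT_c$ rather than merely some lift in $B(W)$ of the corresponding reflection. This is exactly what the identification of $\bt_i$ as a letter of a decomposition of $\bc$, together with the invariance of $\bT_c$ under conjugation by $\bc$, supplies. Once both inclusions are established the equality is immediate; the bound $0\le i<2|T|=nh$ is precisely what is needed so that $(k,j)$ exhausts $\{0,\ldots,h-1\}\times\{0,\ldots,n-1\}$, and hence so that $p$ exhausts $T$ via Proposition \ref{reflections in W}.
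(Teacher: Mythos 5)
Your strategy is sound and, in its second half, genuinely different from the paper's proof; but one step needs a real repair, so let me address both points.

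On the comparison: both you and the paper obtain the inclusion $\{\bt_i\}\subseteq\bT_c$ the same way, as letters of Hurwitz transforms of $(\bs_1,\ldots,\bs_n)$ via Corollary \ref{hurwitz in B}. You diverge on the reverse inclusion. The paper never invokes the \emph{statement} of Proposition \ref{reflections in W} for the given $c$; it replays that proposition's \emph{proof} inside $B(W)$: first the case $\ls(c^i)=i\ls(c)$ (e.g.\ $c$ bipartite), where the image of the $\bt_i$ in $W$ is all of $T$, then a transfer of the cardinality bound to arbitrary standard Coxeter elements by cyclic conjugation, and finally a truncation of the index range to $0\le i<2|T|$ using $\bc^h=\bw_0^2$, an identity it justifies only for bipartite $\bc$. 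You instead quote Proposition \ref{reflections in W} directly (the paper proves it for an arbitrary enumeration $s_1,\ldots,s_n$ of $S$, hence for your $c$), note that $c^h=1$ lets one restrict to $0\le k<h$, and close both inclusions at once by counting, using injectivity of $p$ on $\bT_c$. This is a genuine simplification: it eliminates the bipartite case and the cyclic-shift transfer at the braid level, and it establishes the truncated range $0\le i<2|T|$ uniformly for every standard Coxeter element, a point the paper's own proof addresses only for bipartite $\bc$.

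The weak point is the rotation step: from ``the rotated tuple $(\bc\bt_{i_n}\bc\inv,\bt_{i_1},\ldots,\bt_{i_{n-1}})$ is again a decomposition of $\bc$'' you conclude ``so $\bc\bt_{i_n}\bc\inv\in\bT_c$''. That inference is invalid as stated: a factorization of $\bc$ into $n$ elements of $B(W)$ whose last $n-1$ entries lie in $\bT_c$ does not force the first entry into $\bT_c$. For instance, taking entries $2,\ldots,n$ all equal to $\bs_1$ forces the first entry to be $\bc\bs_1^{-(n-1)}$, which is ``a decomposition of $\bc$'' of this kind, yet for $n\ge 3$ the image of $\bc\bs_1^{-(n-1)}$ in $W$ is not even a reflection. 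What makes your rotated tuple special is that it is a Hurwitz transform of $(\bt_{i_1},\ldots,\bt_{i_n})$: applying the moves $\sigma_{n-1},\sigma_{n-2},\ldots,\sigma_1$ conjugates the last letter by $\bt_{i_1}\cdots\bt_{i_{n-1}}=\bc\bt_{i_n}\inv$, and $(\bc\bt_{i_n}\inv)\,\bt_{i_n}\,(\bc\bt_{i_n}\inv)\inv=\bc\bt_{i_n}\bc\inv$. Since by Corollary \ref{hurwitz in B} the set of decompositions of $\bc$ into $n$ elements of $\bT_c$ is a single Hurwitz orbit, in particular stable under these moves, the rotated tuple is again such a decomposition and its first letter lies in $\bT_c$. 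This is exactly the mechanism you already use for $\bt_j\in\bT_c$, so the repair is one line. (It also lets you bypass the auxiliary claim, likewise left unproved, that every element of $\bT_c$ is the last letter of some decomposition of $\bc$: just rotate your explicit tuple $(\bt_j,\bs_1,\ldots,\bs_j,\bs_{j+2},\ldots,\bs_n)$ a total of $kn$ times to exhibit $\bc^k\bt_j\bc^{-k}=\bt_{kn+j}$ directly as a letter of a Hurwitz transform.)
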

\begin{proof}
Let $\bT'_c$ be the set of elements
$\bs_1\bs_2\ldots\bs_i\bs_{i+1}\bs_i\inv\bs_{i-1}\inv\ldots\bs_1\inv$ with
$i\geq 0$. The elements of $\bT'_c$ are components of the elements of the Hurwitz
orbit of $(\bs_1,\ldots,\bs_n)$: this is clear if $i\leq n$ and the elements with $i>n$ are
conjugates from the previous ones by a power of $c$.
Hence by Corollary \ref{hurwitz in B}  we have 
$\bT'_c\subset \bT_c$. Since by definition $|\bT_c|=|T|$, for proving
$\bT_c=\bT'_c$ it is sufficient to prove $|\bT'_c|\geq |T|$.
If $c$ is a Coxeter element such that $\ls(c^i)=i\ls(c)$ for any $i\leq |T|/h$,
where $h=2|T|/n$ is the Coxeter number (the order of $c$),
\eg, if $c$ is bipartite,
then the image in $W$ of $\bT'_c$ is the full set $T$
of reflections of $W$ as seen in the proof of Proposition
\ref{reflections in W}, so that the result holds in that case.
We now remark that if $\bc=\bs_1\ldots\bs_n$ and $\bc'=\bs_i\ldots
\bs_n\bs_1\ldots\bs_{i-1}$ then $\bT'_{c'}$ contains
$(\bs_i\ldots\bs_n)\bT'_c(\bs_i\ldots\bs_n)\inv$ so has cardinality at least
$|\bT'_c|$, whence the result holds, using the fact that all standard Coxeter elements of a given
Coxeter system are cyclically
conjugate.
By \cite[Chapter V, \S 6 exercise 2]{bourbaki}  if $\bc$ is bipartite, we have
$\bc^h=\bw_0^2$, where 
$\bw_0$ is the lift in $B^+(W)$ of the longest element of $W$.
Since $\bw_0^2$ is central in $B(W)$,
one gets the same set $\bT'_c$ of elements
$\bs_1\bs_2\ldots\bs_i\bs_{i+1}\bs_i\inv\bs_{i-1}\inv\ldots\bs_1\inv$ for
 $0\le i< 2|T|$ as for $i\in \BN$.
\end{proof}

\section{Rational permutation braids}\label{sec:good_permutation_braids}

\subsection{Square-free braids}
Let $(W,S)$ be a finite Coxeter system. Recall that we denote by $p:B(W)\rightarrow W$ the canonical surjection. 

\begin{defn}
An element $\beta\in B(W)$ is a \emp{square-free braid} if it can be
represented by a braid word $$\bs_{i_1}^{\varepsilon_1}\cdots \bs_{i_k}^{\varepsilon_k}$$
with $\varepsilon_j\in\{-1,1\}$ such that $k=\ell_S(p(\beta))$. We denote by
$\bp\subset B(W)$ the set of square-free braids.
\end{defn}

One has $\bW=B^+(W)\cap\bp$ which is also the set of prefixes of the Garside
element $\Delta$ (see \cite[Example 1]{DP}). It follows from the definition
that any square-free braid is obtained by replacing the various $s_{i_j}$ for $j=1,\dots, k$ in a reduced $S$-decomposition $s_{i_1}\cdots s_{i_k}$ of an element $w\in W$ by $\bs_{i_j}^{\pm 1}$. 

\subsection{Rational permutation braids}

As any group of fractions of a cancellative monoid $B(W)$ is endowed with a left-invariant
partial order
that we denote by $\ldiv$ defined by $u\ldiv v$ if $u^{-1}v\in B^+(W)$
which extends the left-divisibility relation on $B^+(W)$.
\begin{defn}
An element $\beta\in B(W)$ is a \emp{rational permutation braid} if it lies in the interval $[\Delta^{-1},\Delta]$ for the partial order $\ldiv$.
\end{defn}
The following proposition can be seen as a particular case of
\cite[Proposition 5.3]{Dehornoy}.
It explains the terminology as the elements of $\bW$ are usually called
permutation braids.
\begin{proposition}\label{prop:gperm}
Let $\beta\in B(W)$. The following are equivalent:
\begin{enumerate}
\item The braid $\beta$ is a rational permutation braid,
\item There exist $\bx$, $\by\in \bW$ such that $\beta=\bx^{-1}\by$,
\item There exist $\bx$, $\by\in \bW$ such that $\beta=\bx\by^{-1}$.
\end{enumerate}
\end{proposition}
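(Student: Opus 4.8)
```latex
The plan is to prove the equivalences by a cycle of implications, exploiting the left-invariance of the order $\ldiv$ together with the standard fact that $\Delta$ is the Garside element, so that $\bW=\{\bx\in B^+(W)\mid \bx\ldiv\Delta\}$ is precisely the interval $[1,\Delta]$ in the divisibility lattice. The key symmetry to keep in mind throughout is the anti-automorphism of $B(W)$ that reverses words; this exchanges left- and right-divisibility and fixes $\Delta$ (since $\Delta$ lifts $w_0$ and $w_0$ has a palindromic reduced expression up to the diagram automorphism), which is what will make conditions (2) and (3) interchangeable. I would also use repeatedly that for $\bx\in\bW$ the complement $\bx^{-1}\Delta$ is again in $\bW$, i.e.\ that $\Delta$ admits both left and right complements within the simples.

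First I would show $(1)\Rightarrow(2)$. Suppose $\beta\in[\Delta^{-1},\Delta]$, so $\Delta^{-1}\ldiv\beta$ and $\beta\ldiv\Delta$. The second relation says $\beta^{-1}\Delta\in B^+(W)$; set $\by:=\Delta(\beta^{-1}\Delta)^{-1}$—but more directly, the first relation $\Delta^{-1}\ldiv\beta$ means $\Delta\beta\in B^+(W)$, so $\Delta\beta$ is a positive braid, and the second means it left-divides $\Delta^2$. Writing $\bz:=\Delta\beta\in B^+(W)$ with $\bz\ldiv\Delta^2$, I would invoke the Garside normal form / the lattice structure on $B^+(W)$ to factor $\bz=\bx'\by$ with $\bx',\by\in\bW$ simple and $\bx'$ the left gcd-complement producing $\beta=\Delta^{-1}\bz$. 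The cleanest route is: since $\beta\ldiv\Delta$, set $\by:=\beta\cdot(\text{its right complement})$ and let $\bx:=\Delta\cdot\by^{-1}\cdots$; I would pin down the correct bookkeeping by taking $\bx$ to be the simple element with $\bx^{-1}=\Delta^{-1}\wedge\beta$ (the left gcd) and checking $\bx^{-1}\beta^{-1}\in\bW$. The point is that $\Delta^{-1}\ldiv\beta\ldiv\Delta$ forces the negative part of $\beta$ to be bounded by $\Delta^{-1}$, hence expressible as $\bx^{-1}$ with $\bx\in\bW$, and the residual positive part is then a simple $\by$.

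For $(2)\Leftrightarrow(3)$ I would use the word-reversing anti-automorphism $\iota$ described above. If $\beta=\bx^{-1}\by$ with $\bx,\by\in\bW$, then replacing $\bx$ and $\by$ by their complements $\bar\bx:=\bx^{-1}\Delta$ and $\bar\by:=\by^{-1}\Delta$, both of which again lie in $\bW$, and using the centrality-free identity $\bx^{-1}\by=(\bar\bx\Delta^{-1})(\Delta\bar\by^{-1})=\bar\bx\bar\by^{-1}$ after the appropriate $\Delta$-conjugation, one converts a left-fraction into a right-fraction. The genuine content is that $\Delta\bW\Delta^{-1}=\bW$ (conjugation by $\Delta$ permutes the simples via the diagram automorphism), so the $\Delta$'s can be absorbed; I would spell this out carefully since it is exactly where the palindromic structure of $w_0$ enters. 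The symmetric argument gives $(3)\Rightarrow(2)$.

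Finally, $(2)\Rightarrow(1)$ (equivalently $(3)\Rightarrow(1)$) is the easy direction: if $\beta=\bx^{-1}\by$ with $\bx,\by\in\bW$, then $\beta\ldiv\Delta$ because $\by\ldiv\Delta$ and left-multiplication by $\bx^{-1}$ can only shrink the element with respect to $\ldiv$ (more precisely $\beta^{-1}\Delta=\by^{-1}\bx\cdot\Delta\cdots$ is positive once one checks $\by^{-1}\Delta\in B^+$ and $\bx\ldiv\Delta$), and symmetrically $\Delta^{-1}\ldiv\beta$ follows from $\Delta\bx^{-1}\in B^+(W)$, i.e.\ from $\bx\ldiv\Delta$. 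I expect the main obstacle to be the direction $(1)\Rightarrow(2)$: extracting a clean simple-by-simple factorization $\beta=\bx^{-1}\by$ from the mere two-sided bound $\Delta^{-1}\ldiv\beta\ldiv\Delta$ requires the full strength of the Garside lattice structure (existence of left and right gcd's and the complement operation on simples), rather than a one-line manipulation, and getting the complements on the correct side without spurious $\Delta$-factors is where the care is needed.
```
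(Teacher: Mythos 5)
Your proposal follows essentially the same route as the paper: the decisive step in both is that $\Delta\beta$ is a positive left-divisor of $\Delta^2$, hence by the Garside structure a product $a_1a_2$ of two simples, giving $\beta=(a_1^{-1}\Delta)^{-1}a_2$, and the converse direction is likewise the same divisibility chain. The only cosmetic difference is that you pass from (2) to (3) via the explicit complement identity $\bx^{-1}\by=(\bx^{-1}\Delta)(\by^{-1}\Delta)^{-1}$ (where, contrary to your remark, no $\Delta$-conjugation is needed since the $\Delta$'s cancel outright), whereas the paper just notes that the proof of $(1)\Leftrightarrow(3)$ is symmetric to that of $(1)\Leftrightarrow(2)$.
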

\begin{proof}
We prove the equivalence of $(1)$ and $(2)$. The equivalence of $(1)$ and
$(3)$ is similar.

Since $B^+(W)$ satisfies the Ore conditions, any element of $B(W)$ can be
uniquely written as $x\inv y$ with $x$ and $y$ in $B^+(W)$ having no common
left-divisor. We have $\Delta\inv\ldiv x\inv y\ldiv\Delta$ if and only if 
$x\inv y=\Delta\inv a=\Delta b\inv$ with $a,b$ in $B^+(W)$. So $(1)$ implies that
$a$ is a divisor of $\Delta^2$ which by the general properties of Garside
monoids (see \cite[V, Proposition 3.24]{DDGKM}) means that $a=a_1a_2$ with $a_1,a_2\in \bW$,
whence $x\inv y=(a_1\inv\Delta)\inv a_2$. The Ore conditions imply then that
$x$ and $y$ are right-divisors of respectively $a_1\inv\Delta$ and $a_2$ in $B^+(W)$, so that $x$ and $y$ are in $\bW$.
Conversely, if $x$ and $y$ are in $\bW$, that is divide $\Delta$,
then $x\inv y\ldiv x\inv\Delta\ldiv\Delta$ and
$\Delta\inv\ldiv\Delta\inv y\ldiv x\inv y$.
\end{proof}

\noindent We thank Matthew Dyer for having 
pointed to us the property stated in the following lemma which is a particular case of \cite[Section 9.4]{Dyernil}. We reprove it here in our particular case.

\begin{lem}\label{good=>permutation}
If $\beta$ is a rational permutation braid and if $s_1s_2\cdots s_k$ is any
reduced expression of its image in $W$, then for $i=1,\dots, k$ there exists
$\varepsilon_i=\pm 1$ such that
$\beta=\bs_1^{\varepsilon_1}\bs_2^{\varepsilon_2}\cdots
\bs_k^{\varepsilon_k}$. In particular, any rational permutation braid is a
square-free braid.
\end{lem}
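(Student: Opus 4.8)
The plan is to prove the statement by induction on $k=\ell_S(w)$, where $w:=p(\beta)$, peeling off the first letter of the reduced expression. Since $s_1s_2\cdots s_k$ is reduced, $s_1$ is a left descent of $w$; as the reduced expression is arbitrary, $s_1$ may be any left descent of $w$, so it suffices to treat one such peeling step. The key reduction I would isolate is: if $\beta$ is a rational permutation braid and $s\in S$ is a left descent of $w=p(\beta)$, then there is a sign $\varepsilon\in\{\pm1\}$ such that $\bs^{-\varepsilon}\beta$ is again a rational permutation braid, necessarily with image $sw$ of length $k-1$. Granting this, setting $\varepsilon_1=\varepsilon$ and $\beta_1:=\bs_1^{-\varepsilon_1}\beta$, the word $s_2\cdots s_k$ is a reduced expression of $p(\beta_1)=s_1w$, so the induction hypothesis yields signs $\varepsilon_2,\dots,\varepsilon_k$ with $\beta_1=\bs_2^{\varepsilon_2}\cdots\bs_k^{\varepsilon_k}$, whence $\beta=\bs_1^{\varepsilon_1}\beta_1=\bs_1^{\varepsilon_1}\cdots\bs_k^{\varepsilon_k}$.

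To prove the reduction step I would use Proposition \ref{prop:gperm} to write $\beta=\bx^{-1}\by$ with $\bx,\by\in\bW$, and set $x=p(\bx)$, $y=p(\by)$, so that $w=x^{-1}y$. The argument splits according to whether $s$ is a right descent of $x$. If $\ell_S(xs)<\ell_S(x)$, then $x=(xs)s$ is reduced, so in $B^+(W)$ the simple element $\bx$ factors as $\bx=\bx'\bs$ with $\bx'\in\bW$ the lift of $xs$, whence $\bs\beta=\bs(\bx'\bs)^{-1}\by=\bx'^{-1}\by$; by Proposition \ref{prop:gperm} this is a rational permutation braid, and $p(\bs\beta)=(xs)^{-1}y=sw$, so one takes $\varepsilon=-1$. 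If instead $\ell_S(xs)>\ell_S(x)$, then $\bx\bs\in\bW$ is the lift of $xs$, so $\bs^{-1}\beta=(\bx\bs)^{-1}\by$ is again of the form required by Proposition \ref{prop:gperm}, hence a rational permutation braid, with image $(xs)^{-1}y=sw$; here one takes $\varepsilon=+1$. In either case $\bs^{-\varepsilon}\beta$ is a rational permutation braid whose image $sw$ has length $k-1$ because $s$ is a left descent of $w$. Observe that this step does not require $\bx,\by$ to be chosen without common left divisor: any factorisation furnished by Proposition \ref{prop:gperm} works.

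For the base case $k=0$ one has $p(\beta)=1$, and I would argue that $\beta=1$: taking the normal form $\beta=\bu^{-1}\bv$ with $\bu,\bv\in B^+(W)$ having no common left divisor (as in the proof of Proposition \ref{prop:gperm}, where in fact $\bu,\bv\in\bW$), the equality $p(\bu)^{-1}p(\bv)=1$ forces $p(\bu)=p(\bv)$, hence $\bu=\bv$ since $p$ restricts to a bijection $\bW\to W$; the absence of a common left divisor then gives $\bu=\bv=1$. The resulting expression $\beta=\bs_1^{\varepsilon_1}\cdots\bs_k^{\varepsilon_k}$ has length $k=\ell_S(p(\beta))$, so $\beta$ is square-free by definition, yielding the final assertion. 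The only genuine subtlety, and the point I would verify most carefully, is the dichotomy in the reduction step: one must choose the sign so that the relevant one-sided product $\bx\bs^{\pm1}$ stays inside $\bW$, that is, remains a lift of a reduced expression, which is exactly what comparing $\ell_S(xs)$ with $\ell_S(x)$ controls.
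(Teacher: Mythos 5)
Your proof is correct, and it reaches the result by a route that is organized differently from the paper's, though it rests on the same underlying Coxeter-theoretic facts. The paper fixes one decomposition $\beta=\bx\by^{-1}$ (form (3) of Proposition \ref{prop:gperm}) and proves, by induction on $k$, an auxiliary claim living entirely inside $\bW$: for any $\by\in\bW$ and any reduced word $s_1\cdots s_k$, the element $\bs_1^{\varepsilon_1}\cdots\bs_k^{\varepsilon_k}\by$ lies in $\bW$, where the signs are given explicitly by $\varepsilon_i=1$ exactly when $\ls(s_is_{i+1}\cdots s_ky)=\ls(s_{i+1}\cdots s_ky)+1$; the inductive step invokes the exchange lemma, the base case is trivial, and the lemma follows by identifying this element with $\bx$ through the bijectivity of $p$ on $\bW$. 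You instead induct directly on the statement, using form (2): your key step is that for a left descent $s_1$ of $p(\beta)$ there is a sign $\varepsilon_1$ such that $\bs_1^{-\varepsilon_1}\beta$ is again a rational permutation braid with shorter image, proved by refactoring $\bx$ as $\bx'\bs_1$, or extending it to $\bx\bs_1$, inside $\bW$ according to whether $s_1$ is a right descent of $x$ --- essentially the same fact the exchange lemma supplies in the paper, in the guise that a right descent can be made the last letter of a reduced expression. The cost of your organization is a non-trivial base case (a rational permutation braid with trivial image is trivial), which you settle correctly via the coprime normal form $\bu^{-1}\bv$ and the bijectivity of $p$ on $\bW$; the paper avoids this, since its auxiliary claim at $k=0$ just asserts $\by\in\bW$. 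What your route buys is a reusable closure statement --- the set of rational permutation braids is stable under peeling off a left descent of the image --- while the paper's buys explicit formulas for the signs and the slightly stronger identity $\bx=\bs_1^{\varepsilon_1}\cdots\bs_k^{\varepsilon_k}\by$.
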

\begin{proof}
We prove by induction on $k$ that if $s_1s_2\ldots s_k$ is a reduced decomposition of an element of $W$, then for $\by\in\bW$ the element 
$\bs_1^ {\varepsilon_1}\ldots\bs_k^{\varepsilon_k}\by$, where $\varepsilon_i=1$ if
$\ls(s_is_{i+1}\ldots s_ky)=\ls(s_{i+1}\ldots s_ky)+1$ and $\varepsilon_i=-1$ otherwise is in $\bW$.
This concludes the proof, since if $\beta=\bx\by\inv$ is a rational permutation braid with $\bx$ and $\by$ in $\bW$ and if $s_1\ldots
s_k$ is a reduced decomposition of the image of $\beta$ in $W$, then $\bs_1^ {\varepsilon_1}\ldots\bs_k^{\varepsilon_k}\by$
is in $\bW$ and has same image as $\bx$, hence is equal to $\bx$, so that
$\beta=\bx\by\inv=\bs_1^ {\varepsilon_1}\ldots\bs_k^{\varepsilon_k}$ is a
square-free braid.

By induction hypothesis $\bx'=\bs_2^ {\varepsilon_1}\ldots\bs_k^{\varepsilon_k}\by$ is in $\bW$. If $\varepsilon_1=1$ then
$\bs_1\bx'$ is in $\bW$. If $\varepsilon_1=-1$ then, by the exchange lemma, $x'$ has a reduced expression of the form
$s_1s'_2\ldots s'_m$  so that $\bs_1^{\varepsilon_1}\bx'=\bs'_2\ldots\bs'_m$ is again in $\bW$.
\end{proof}
\begin{exple}\label{exple:typeamixed}
Let $W$ be of type $A_2$, with simple generating reflections $s_1$ and
$s_2$;
the element $w:=s_1s_2s_1$ in $W$ has exactly two reduced expressions
$s_1s_2s_1=s_2s_1s_2$. There are six square-free braids having $w$ as image.  For example,
$\beta=\bs_1^{-1}\bs_2\bs_1$. Here the reduced expression $s_1s_2s_1$ has been lifted to the braid word $\bs_1^{-1}\bs_2\bs_1$;  Lemma \ref{good=>permutation} says that we can lift any reduced expression, in particular we can also lift the reduced expression $s_2s_1s_2$ of $w$ in which case one has $\beta=\bs_2\bs_1\bs_2^{-1}$.
\end{exple}
\begin{remark}
Note that the set of rational permutation braids is the set of braids where
"one can apply (mixed) braid relations in reduced words as in the Coxeter
group". Let us be more precise: if $\beta$ is a rational permutation braid, then one can pass from any shortest expression of $\beta$ with respect to
$\bS\cup\bS\inv$ to any other one by mixed braid relations as defined in
\cite{Dyernil}, that is, braid
relations possibly involving inverses. Moreover, to any reduced expression of the image of $\beta$ corresponds a shortest braid word for $\beta$. Precisely given any two reduced
expressions of the image of a braid $\beta$ in $W$ which differ by a braid
relation, then by Lemma \ref{good=>permutation} both reduced expressions can
be lifted to reduced braid words for $\beta$ which differ by a mixed braid
relation. For example the relation $\bs_1^{-1}\bs_2\bs_1=\bs_2\bs_1\bs_2^{-1}$ in Example \ref{exple:typeamixed} is a mixed braid relation. 
\end{remark}

\section{Mikado braids of type $A_n$}\label{sec:a}
In this section $(W,S)$ is a Coxeter system of type $A_n$. The
group $W$ is identified with the symmetric group $\mathfrak{S}_{n+1}$ on
$\{1,2\ldots,n+1\}$ and $S$ with the set of simple transpositions, writing $s_i:=(i,i+1)$. The Artin-Tits group $B(W)$ is identified with $\mathcal{B}_{n+1}$.
\subsection{Square-free braids}
We give some geometrical properties of square-free braids of type $A_n$.
Firstly, notice that $\beta\in\mathcal{B}_{n+1}$ is a square-free braid if and only if there is a braid diagram for $\beta$ where any two strands cross at most once. We call a braid diagram \emp{reduced} if it has the minimal number of crossings. 
\begin{lem}\label{lem:remove}
Removing any strand in a diagram of a square-free braid $\beta\in
\mathcal{B}_{n+1}$ gives a diagram for a square-free braid $\beta'\in \mathcal{B}_n$.
\end{lem}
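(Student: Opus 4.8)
The plan is to argue entirely at the level of braid diagrams, using the geometric characterization recalled immediately before the statement: an element of $\mathcal{B}_{n+1}$ is square-free if and only if it admits a diagram in which any two strands cross at most once. Accordingly, I would take $\beta$ to be presented by such a diagram $D$, in which every pair of its $n+1$ strands crosses at most once, and let $\beta'$ be the braid obtained by removing a chosen strand $\gamma$ from $D$. It then suffices to show that the resulting diagram $D'$ on $n$ strands still has the property that any two of its strands cross at most once, for then the same characterization yields that $\beta'$ is square-free.

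First I would make the removal operation precise. After perturbing so that crossings occur at distinct heights, I read $D$ as a word $\bs_{i_1}^{\varepsilon_1}\cdots\bs_{i_k}^{\varepsilon_k}$ and track the position of the strand $\gamma$ from top to bottom. Each elementary crossing either involves $\gamma$, in which case it is deleted, or it does not, in which case it survives but may be relabelled because the positions to the right of $\gamma$ shift by one after the deletion. After this relabelling the surviving crossings constitute a genuine braid diagram $D'$ on $n$ strands, representing some $\beta'\in\mathcal{B}_n$.

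The key point is then that, for any two strands $a,b$ of $D$ both distinct from $\gamma$, the number of crossings between $a$ and $b$ is the same in $D$ and in $D'$. Indeed, the only crossings suppressed when passing from $D$ to $D'$ are those in which $\gamma$ itself is one of the two participating strands; a crossing between $a$ and $b$ is neither created nor destroyed by deleting $\gamma$, and distinct surviving crossings stay distinct. Since $a$ and $b$ cross at most once in $D$ by hypothesis, they cross at most once in $D'$. Hence $D'$ is a diagram in which any two strands cross at most once, so $\beta'$ is a square-free braid in $\mathcal{B}_n$.

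I expect the only point requiring genuine care to be the verification that the removal operation yields a bona fide braid diagram and, above all, that it leaves the pairwise crossing numbers of the surviving strands unchanged: that is, checking that closing up the gap left by $\gamma$ introduces no new crossing between the remaining strands. This is a bookkeeping fact about tracking one strand through the word, not a structural difficulty, and once it is granted the conclusion is immediate from the crossing characterization.
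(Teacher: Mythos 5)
There is a gap, and it sits exactly where the actual content of the paper's proof lies. You begin by choosing a diagram $D$ of $\beta$ in which any two strands cross at most once, and you then verify (correctly, and this part is essentially trivial) that deleting a strand from such a diagram does not change the pairwise crossing numbers of the surviving strands, so the resulting diagram again has every pair of strands crossing at most once. But the lemma asserts this for an \emph{arbitrary} diagram $D$ of the square-free braid $\beta$: in such a diagram two strands may cross many times, and the claim is that the diagram $D'$ obtained by deleting a strand from \emph{this} $D$ still represents a square-free braid. Your opening move ``take $\beta$ to be presented by such a diagram'' silently restricts to the favourable diagrams, and nothing in your argument relates the braid represented by $D'$ (removal from a bad diagram) to the braid you actually analyse (removal from a good one). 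What is missing is the compatibility of strand removal with isotopy, and that is precisely the paper's proof: take an isotopy carrying the arbitrary diagram $D$ to a diagram $\widetilde{D}$ in which any two strands cross at most once; forgetting the strand to be removed, this isotopy deforms $D'$ into the diagram $\widetilde{D}'$ obtained from $\widetilde{D}$ by removing the corresponding strand, so $D'$ and $\widetilde{D}'$ represent the same braid $\beta'$, and the crossing count you carried out, applied to $\widetilde{D}'$, concludes.

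In short, your word-level bookkeeping reproduces only the last (and easiest) sentence of the paper's proof; the substance of the lemma is the reduction of an arbitrary diagram to the good one, i.e.\ the well-definedness of ``remove this strand'' on isotopy classes of diagrams, and that step is absent. The gap is fixable: either insert the isotopy argument above, or invoke the standard fact that forgetting a strand induces a well-defined map from $(n+1)$-strand braids (with a marked strand) to $\mathcal{B}_n$. Incidentally, the point you single out as the delicate one --- that closing up the gap left by $\gamma$ might create new crossings among the survivors --- is a non-issue: the remaining strands are left in place and only a planar isotopy sliding endpoints is needed; the real issue is the one above.
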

\begin{proof} Let $D$ be a diagram for $\beta$ and $D'$ be the diagram
obtained by removing a given strand. 
Since $\beta$ is a square-free braid, there is an isotopy which allows one to pass from $D$ to a diagram $\widetilde{D}$ where any two strands cross at most once. If we forget the strand we want to remove, our isotopy deforms $D'$ in a diagram $\widetilde{D}'$ obtained from $\widetilde{D}$ by removing the strand, and in $\widetilde{D}'$ any two strands among the remaining $n$ strands cross each other at most once since they were crossing each other at  most once in $\widetilde{D}$. 
\end{proof}

\subsection{Mikado braids}\label{sec:mikbr}
\begin{defn}
A strand of a given reduced diagram $D$ of a braid $\beta\in\mathcal{B}_{n+1}$ which is over the other strands it crosses is called \emp{good}.
\end{defn}
\begin{rmq} A strand is good in a reduced diagram for a braid $\beta\in \mathcal{B}_{n+1}^{\mathrm{per}}$ if and only if it is good in any reduced diagram for $\beta$: indeed, we first claim that the number of crossing between any two strands is constant on the set of reduced diagrams of a square-free braid. To see this, notice that this property holds for the reduced diagrams of the permutation obtained as image of $\beta$ in the symmetric group, and that the reduced diagrams for the various square-free braids having this permutation as image are obtained by replacing the crossings in the permutation diagrams by positive or negative crossings.  Hence the claim holds. Now since $\beta$ is a square-free braid, any two strands cross at most once, and obviously one cannot continuously deform a positive crossing in a negative one. Hence the number and type of crossing between any two strands is constant on the set of reduced diagrams for $\beta$. It follows that a strand which is above all the other in some reduced diagram must be above all the others in all the reduced diagrams.
\end{rmq}

\begin{defn}\label{def:mikado}
We define \emp{Mikado braids} recursively as 
\begin{enumerate}
\item The braid $e$ is a Mikado braid in $\cB_1$.
\item A braid $\beta\in \mathcal{B}_{n+1}$ is a Mikado braid if in any reduced diagram for $\beta$, there exists at least one good strand and if removing any such strand yields a Mikado braid in $\mathcal{B}_n$.
\end{enumerate}
\end{defn}
We denote by $\Bgnn$ the set of Mikado braids. An example of a Mikado braid in $\mathcal{B}_7$ is given in Figure \ref{fig:mikado}.
\begin{rmq}\label{def:dehornoy}
Notice that Definition \ref{def:mikado} is equivalent to the definition of an \emp{$f$-realizable} braid from \cite[Section 2]{Dehornoy}.
\end{rmq}

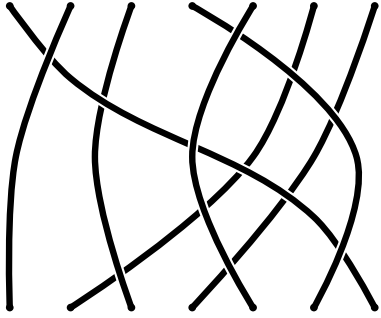
\begin{figure}[h!]
\begin{center}
\psscalebox{0.8}{\begin{pspicture}(0,0)(6,5)
\pscurve[linewidth=3pt](1,0)(4,2.5)(5,5)
\pscurve[linewidth=5pt, linecolor=white](2,0)(1.4,2.5)(2,5)
\pscurve[linewidth=3pt](2,0)(1.4,2.5)(2,5)
\pscurve[linewidth=5pt, linecolor=white](3,0)(5,2.5)(6,5)
\pscurve[linewidth=3pt](3,0)(5,2.5)(6,5)
\pscurve[linewidth=5pt, linecolor=white](0,5)(1,3.8)(5,1.5)(6,0)
\pscurve[linewidth=3pt](0,5)(1,3.8)(5,1.5)(6,0)
\pscurve[linewidth=5pt, linecolor=white](5,0)(5.7,2.5)(3,5)
\pscurve[linewidth=3pt](5,0)(5.7,2.5)(3,5)
\pscurve[linewidth=5pt, linecolor=white](4,0)(3,2.5)(4,5)
\pscurve[linewidth=3pt](4,0)(3,2.5)(4,5)
\pscurve[linewidth=5pt, linecolor=white](1,5)(0.1,2.5)(0,0)
\pscurve[linewidth=3pt](1,5)(0.1,2.5)(0,0)
\psdot(0,5)
\psdot(1,5)
\psdot(2,5)
\psdot(3,5)
\psdot(4,5)
\psdot(5,5)
\psdot(6,5)

\psdot(0,0)
\psdot(1,0)
\psdot(2,0)
\psdot(3,0)
\psdot(4,0)
\psdot(5,0)
\psdot(6,0)

\end{pspicture}}
\end{center}
\caption{A mikado braid in $\mathcal{B}_7$.}
\label{fig:mikado}
\end{figure}

\begin{rmq}\label{rmq:unseulbrin}
If $\beta\in\Bpnn$ with good $i$\ts{th} strand such that removing it yields a
braid $\beta'\in\Bgn$, then $\beta\in\Bgnn$ (\ie, we can assume that the inductive condition in point $(2)$ of Definition \ref{def:mikado} is true for one strand instead of any strand). We prove it by induction on rank: indeed, assume that $\beta$ has another good $j$\ts{th} strand. We must show that $\widetilde{\beta}\in \mathcal{B}_n$ obtained by removing the $j$\ts{th} strand is in $\Bgn$. The $i$\ts{th} and $j$\ts{th} strand do not cross. The $j$\ts{th} strand of $\beta$ may have become the $(j-1)$\ts{th} strand of $\beta'$ but it is still good. In particular, we can remove it, yielding a braid $\beta''\in \mathcal{B}_{n-1}^{\mathrm{Mik}}$. But $\beta''$ is also obtained from $\widetilde{\beta}$ by removing the strand corresponding to the $i$\ts{th} strand in $\beta$ (which may be the $(i-1)$\ts{th} strand of $\widetilde{\beta}$), hence by induction $\widetilde{\beta}\in\Bgn$ since $\beta''\in \mathcal{B}_{n-1}^{\mathrm{Mik}}$ is.
\end{rmq}   

\begin{prop}\label{prop:caract}
Let $\beta\in\mathcal{B}_{n+1}$. Then $\beta$ is a Mikado braid if and only
if $\beta=\bx^{-1}\by$ for some $\bx, \by\in\bW$, hence by Proposition
\ref{prop:gperm} if and only if $\beta$ a rational permutation braid.  
\end{prop}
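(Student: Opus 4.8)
The plan is to argue by induction on the number of strands $n+1$, establishing the two implications separately; the base case $\mathcal{B}_1$ is trivial, since there $\beta=e=e^{-1}e$ is simultaneously Mikado and of the form $\bx^{-1}\by$. Throughout I would use that a rational permutation braid is square-free (Lemma \ref{good=>permutation}), so that it admits reduced diagrams in which any two strands cross at most once, and that the property of being good is independent of the chosen reduced diagram.

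For the implication ``rational permutation $\Rightarrow$ Mikado'', suppose $\beta=\bx^{-1}\by$ with $\bx,\by\in\bW$. First I would build a diagram for $\beta$ by stacking a reduced diagram of $\bx^{-1}$ on top of one of $\by$, and label the positions $1,\dots,n+1$ along the horizontal interface separating the two blocks. The candidate good strand $g$ is the one passing through the interface at the extreme (say rightmost) position $n+1$. Within the positive block $\by$, since $g$ reaches the interface in the rightmost position it crosses every other strand while moving to the right, hence passes over it; within the negative block $\bx^{-1}$, since $g$ leaves the interface from the rightmost position it crosses every other strand while moving to the left, hence again passes over it. Thus $g$ lies above every strand it meets in each block. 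A strand crossing $g$ in both blocks then forms a bigon with $g$ above at both crossings, which is removed by a Reidemeister~II move; after this reduction $g$ crosses each remaining strand at most once and always from above, so $g$ is good. Removing $g$ deletes the corresponding strand from each block and yields $\beta'=(\bx')^{-1}\by'$, where $\bx',\by'\in\bW$ are simple braids of $\mathcal{B}_n$ (deletion preserves positivity and, by Lemma \ref{lem:remove}, the square-free property). By induction $\beta'\in\Bgn$, and by Remark \ref{rmq:unseulbrin} it suffices to exhibit a single good strand whose removal is Mikado; hence $\beta\in\Bgnn$.

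For the converse, suppose $\beta\in\Bgnn$. By definition $\beta$ has a good strand $g$, and removing it gives $\beta'\in\Bgn$, which by induction equals $(\bx')^{-1}\by'$ for some $\bx',\by'\in\bW$ in $\mathcal{B}_n$. I would then reconstruct $\beta$ by reinserting $g$ into the stacked diagram of $\beta'$ as a strand that stays above everything and passes through the interface at the extreme position $n+1$: inside the lower block it runs from its bottom endpoint up to interface position $n+1$, crossing over all strands in between, and inside the upper block from interface position $n+1$ up to its top endpoint, again crossing over everything. Since a strand lying above all the others is determined up to isotopy by its two endpoints together with the rest of the diagram, this reinsertion recovers $\beta$; and because the inserted strand crosses each block monotonically and with the correct sign, the two augmented blocks are again simple braids, giving $\beta=\bx^{-1}\by$ with $\bx,\by\in\bW$ in $\mathcal{B}_{n+1}$. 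The stated equivalence with rational permutation braids is then Proposition \ref{prop:gperm}.

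I expect the main obstacle to be the geometric bookkeeping in the two reductions: in the first implication, verifying rigorously that the interface-extreme strand passes above every strand it meets and that every doubled crossing is a genuinely removable bigon rather than a clasp; and in the second, justifying that reinserting a good strand with prescribed endpoints as an ``over everything'' strand both recovers $\beta$ and produces honest simple braids in each block. Once the convention relating the sign of a crossing to the left/right motion of the over-strand is fixed, both points reduce to the observation that in a simple braid a strand crosses each other strand at most once, so its over/under behaviour is dictated solely by whether it moves to the left or to the right relative to that strand.
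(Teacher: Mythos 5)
Your proposal is correct and takes essentially the same route as the paper: induction on the number of strands, with the good strand located at (or reinserted through) the extreme interface position of a stacked diagram of $\bx^{-1}$ over $\by$, plus Remark \ref{rmq:unseulbrin} to reduce to a single good strand. The only differences are in execution: where you verify geometrically that the two augmented blocks are simple (positive, respectively negative, with each pair of strands crossing at most once), the paper writes the explicit word $\bs_i^{-1}\bs_{i+1}^{-1}\cdots\bs_n^{-1}\,\bx'^{-1}\by'\,\bs_n\bs_{n-1}\cdots\bs_j$ and invokes left-reducedness of $s_n\cdots s_k$ with respect to the parabolic subgroup $\langle s_1,\dots,s_{n-1}\rangle$, and your Reidemeister~II treatment of the possibly non-reduced concatenated diagram makes explicit a point the paper passes over silently.
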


\begin{proof}
First, assume that $\beta$ is a rational permutation braid. We argue by induction on $n$. The trivial
braid $e$ has the claimed form. Now assume $\beta\in\Bgnn$ and consider any reduced braid
diagram for it. Then there exists at least one good strand. Removing the rightmost such strand we
get a braid $\beta'\in \Bgn$ which by induction can be written as $\bx'^{-1}\by'$ for
$\bx',\by'\in\bW'$, where $W'=\left\langle s_1,\dots, s_{n-1}\right\rangle$. Assuming that the
strand removed joins the $i$\ts{th} point of the sequence above to the $j$\ts{th} point of the
sequence below, one then has that $\beta=\bs_i^{-1}\bs_{i+1}^{-1}\dots \bs_n^{-1} \bx'^{-1} \by'
\bs_n \bs_{n-1}\dots \bs_j$ (see Figure \ref{figure:brindevant}), where $\bx'$ and $\by'$ are seen
in $\mathcal{B}_{n+1}$ under the embedding $\iota_n:\mathcal{B}_n\hookrightarrow
\mathcal{B}_{n+1}$. But for any $k$, the parabolic Coxeter element $s_n s_{n-1}\dots s_k$ is left-reduced with respect to $W'$. In particular for $k=i,j$, it implies that both $\by' \bs_n \bs_{n-1}\dots\bs_j$ and $\bx' \bs_n \bs_{n-1}\dots\bs_i$ lie in $\bW$.

Conversely, assume that $\beta=\bx^{-1}\by$ with $\bx,\by\in\bW$. Note
that in a reduced braid diagram for a (positive) permutation braid, \ie, an
element of $\bW$, for all $i$ the $i$\ts{th} strand is above the stands numbered
$1$ to $i-1$. Hence in $\bx^{-1}$ the strand which ends at
position $n+1$ is above all the others, the one ending at
position $n$ is just below, and so on. It implies that when
concatenating reduced diagrams for
$\bx^{-1}$ and $\by$, in the resulting diagram, the $(n+1)$\ts{th}
strand of $\by$ is above all the others, the $n$\ts{th} strand of $\by$ is
just below, and so on. Hence the Mikado property is
satisfied, so that $\beta$ is a Mikado braid.
\end{proof}
\begin{center}
\begin{figure}
\psscalebox{0.9}{\begin{pspicture}(8,0)(6,3.5)
\psdot(0.5,0)
\psdot(1,0)
\psdot(1.5,0)
\psdot(2,0)
\psdot(2.5,0)
\psdot(3,0)
\psdot(3.5,0)
\psdot(4,0)
\psdot(4.5,0)
\psdot(0.5,3)
\psdot(1,3)
\psdot(1.5,3)
\psdot(2,3)
\psdot(2.5,3)
\psdot(3,3)
\psdot(3.5,3)
\psdot(4,3)
\psdot(4.5,3)
\psline(0.5,0)(0.5,3)
\psline(1,0)(1,3)
\psline(1.5,0)(1.5,1)
\psline(2,0)(2,3)
\psline(2.5,0)(2.5,3)
\psline(3,1)(3,3)
\psline(3.5,0)(3.5,3)
\psline(4,0)(4,3)
\psline(4.5,0)(4.5,3)
\psframe[linecolor=gray, fillstyle=solid, fillcolor=gray](0.25, 2.1)(4.75, 0.6)
\pscurve[linewidth=0.1](3,0)(2.7,0.9)(1.6,2.1)(1.5,3)
\psdot(6.5,0)
\psdot(7,0)
\psdot(7.5,0)
\psdot(8,0)
\psdot(8.5,0)
\psdot(9,0)
\psdot(9.5,0)
\psdot(10,0)
\psdot(10.5,0)
\psdot(6.5,3)
\psdot(7,3)
\psdot(7.5,3)
\psdot(8,3)
\psdot(8.5,3)
\psdot(9,3)
\psdot(9.5,3)
\psdot(10,3)
\psdot(10.5,3)
\psline(6.5,0)(6.5,3)
\psline(7,0)(7,3)
\pscurve(7.5,2.1)(7.55,2.4)(7.9,2.7)(8,3)
\pscurve(8,2.1)(8.05,2.4)(8.4,2.7)(8.5,3)
\pscurve(8.5,2.1)(8.55,2.4)(8.9,2.7)(9,3)
\pscurve(9,2.1)(9.05,2.4)(9.4,2.7)(9.5,3)
\pscurve(9.5,2.1)(9.55,2.4)(9.9,2.7)(10,3)
\pscurve(10,2.1)(10.05,2.4)(10.4,2.7)(10.5,3)
\psline(7.5,0)(7.5,1)
\psline(8,0)(8,1.5)
\psline(8.5,0)(8.5,1.5)
\psline(9,1)(9,1.5)
\pscurve(9.5,0)(9.1,0.4)(9,0.6)
\pscurve(10,0)(9.6,0.4)(9.5,0.6)
\pscurve(10.5,0)(10.1,0.4)(10,0.6)
\psframe[linecolor=gray, fillstyle=solid, fillcolor=gray](6.25, 2.1)(10.25, 0.6)
\pscurve[linecolor=white, linewidth=0.27](7.5,3)(7.7,2.8)(10.4,2.3)(10.5,2.1)
\pscurve[linewidth=0.1](7.5,3)(7.7,2.8)(10.4,2.3)(10.5,2.1)
\pscurve[linecolor=white, linewidth=0.27](9,0)(9.1,0.2)(10.4,0.4)(10.5,0.6)
\pscurve[linewidth=0.1](9,0)(9.1,0.2)(10.4,0.4)(10.5,0.6)
\pscurve[linewidth=0.1](10.5,0.6)(10.6, 1.35)(10.5,2.1)
\rput(8,1.4){\large ${\beta'}$}
\psline[linestyle=dotted](6,0)(13.5,0)
\psline[linestyle=dotted](6,0.6)(13.5,0.6)
\psline[linestyle=dotted](6,2.1)(13.5,2.1)
\psline[linestyle=dotted](6,3)(13.5,3)
\rput(5.5,1.5){$\rightsquigarrow$}
\rput(12,0.2){$\bs_n\bs_{n-1}\cdots \bs_j$}
\rput(12.1,2.5){$\bs_i^{-1}\bs_{i+1}^{-1}\cdots \bs_n^{-1}$}
\rput(11.8,1.35){$\iota(\beta')$}
\end{pspicture}}
\caption{Illustration of the inductive process used in the proof of Proposition \ref{prop:caract}.}
\label{figure:brindevant}
\end{figure}
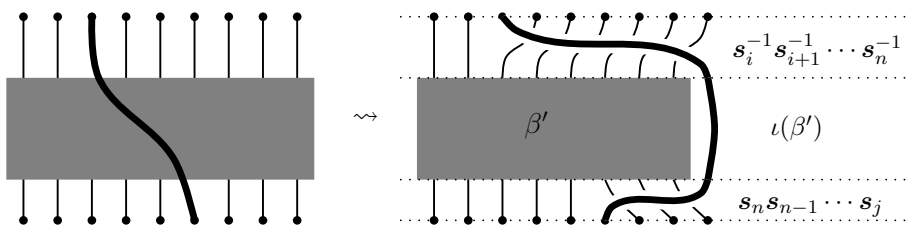
\end{center}

This gives in particular a non-inductive algebraic characterization of
rational permutation braids. To summarize, putting Propositions \ref{prop:gperm}, \ref{prop:caract} and Remark \ref{def:dehornoy} together we get:
\begin{thm}\label{thm:equivA}
Let $\beta\in \mathcal{B}_{n+1}$. The following are equivalent:
\begin{enumerate}
\item The braid $\beta$ is a Mikado braid.
\item The braid $\beta$ is a rational permutation braid.
\item There exist $\bx, \by\in\bW$ such that $\beta=\bx^{-1}\by$.
\item There exist $\bx,\by\in\bW$ such that $\beta=\bx\by^{-1}$.
\item The braid $\beta$ is $f$-realizable in the sense of \cite{Dehornoy}.
\end{enumerate}
\end{thm}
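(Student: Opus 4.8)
The plan is to assemble the theorem from the three results cited immediately before its statement, treating it as a synthesis rather than a fresh argument: essentially all of the content has already been established, and the task is to chain the equivalences correctly. First I would invoke Proposition \ref{prop:gperm}, which furnishes the equivalence of conditions (2), (3), and (4) directly. Being a rational permutation braid is defined as lying in the interval $[\Delta^{-1},\Delta]$ for $\ldiv$, and that proposition shows this is equivalent to admitting an expression $\bx^{-1}\by$ and equivalent to admitting an expression $\bx\by^{-1}$, with $\bx,\by\in\bW$. So the cluster (2)$\Leftrightarrow$(3)$\Leftrightarrow$(4) is immediate.

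Next I would bring in Proposition \ref{prop:caract}, which is where the genuine topological work resides: it asserts that $\beta$ is a Mikado braid if and only if $\beta=\bx^{-1}\by$ for some $\bx,\by\in\bW$. This links (1) to (3), and hence---via the previous step---to (2) and (4) as well. Recall that the forward direction there proceeds by induction on $n$, removing the rightmost good strand and using that the parabolic Coxeter element $s_n s_{n-1}\cdots s_k$ is left-reduced with respect to the standard parabolic $W'=\langle s_1,\dots,s_{n-1}\rangle$; the converse uses the observation that concatenating reduced diagrams for $\bx^{-1}$ and $\by$ places the strands of $\by$ in a descending ``over'' order, which realizes the Mikado property.

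Finally, Remark \ref{def:dehornoy} records that Definition \ref{def:mikado} of a Mikado braid coincides with Dehornoy's notion of an $f$-realizable braid from \cite[Section 2]{Dehornoy}, which supplies (1)$\Leftrightarrow$(5). Chaining the three pieces---the cluster (2)$\Leftrightarrow$(3)$\Leftrightarrow$(4) from Proposition \ref{prop:gperm}, the link (1)$\Leftrightarrow$(3) from Proposition \ref{prop:caract}, and (1)$\Leftrightarrow$(5) from Remark \ref{def:dehornoy}---closes the loop and yields the mutual equivalence of all five conditions.

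Because the theorem is purely a collation, there is essentially no obstacle at the level of this statement itself; the hard work is deferred to Proposition \ref{prop:caract}. The only point demanding attention is bookkeeping consistency: one must check that the $\bx,\by$ appearing in Propositions \ref{prop:gperm} and \ref{prop:caract} refer to the same set $\bW$ of permutation braids, and that all statements are read under the identification $B(W)=\mathcal{B}_{n+1}$ fixed at the start of the section. Once those identifications are verified, the theorem follows formally.
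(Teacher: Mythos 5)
Your proposal is correct and follows exactly the paper's own route: the theorem is stated there as a summary obtained ``putting Propositions \ref{prop:gperm}, \ref{prop:caract} and Remark \ref{def:dehornoy} together,'' which is precisely your chaining of (2)$\Leftrightarrow$(3)$\Leftrightarrow$(4), (1)$\Leftrightarrow$(3), and (1)$\Leftrightarrow$(5). Nothing is missing; the substantive work is indeed deferred to Proposition \ref{prop:caract}, just as you note.
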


\begin{rmq}\label{enumerationA}
If one is interested in enumerative combinatorics, it is natural to ask for
the number $\mathrm{Mik}(n)$ of Mikado braids in $\mathcal{B}_{n}$. Using
Theorem \ref{thm:equivA}, we see that counting Mikado braids is equivalent to
counting pairs of permutations
$(x,y)\in\mathfrak{S}_{n}\times\mathfrak{S}_{n}$ with no common left
descents. Indeed, there may be distinct pairs $(\bx, \by)$ and $(\bx',
\by')\in\bW\times \bW$ such that $\bx^{-1}\by=\bx'^{-1}\by'$, but taking
$\bx$ and $\by$ with no common left divisor in $B(W)^+$ gives unicity. These
pairs have been counted in \cite{CSV} and correspond to the coefficients of a
power series given by a Bessel function. The first values are
$\mathrm{Mik}(1)=1$, $\mathrm{Mik}(2)=3$, $\mathrm{Mik}(3)=19$,
$\mathrm{Mik}(4)=211$, \dots . There does not seem to exist a simple closed formula for $\mathrm{Mik}(n)$. 
\end{rmq}

\subsection{Simple dual braids are Mikado braids}

The aim of this Section is to show that the images in $\mathcal{B}_{n+1}$ of the simple elements of a dual braid monoid $B_c^*$, where $c$ is a standard Coxeter element, are Mikado braids. To this end we first describe a model for the group of fractions of $B_c^*$ using braids in a cylinder and then explain how the embedding of $B_c^*$ into the Artin braid group $\mathcal{B}_{n+1}$ can be viewed using this model. 

\subsubsection{Noncrossing partitions}\label{sec:coxeterarbitr}

Given a standard Coxeter element $c$, we graphically describe the elements of $\DIV(c)$ as follows. The Coxeter
elements in $W$ are exactly the $(n+1)$-cycles; a Coxeter element $c$ is standard if and only if it
is an $(n+1)$-cycle $c=(i_1, i_2,\ldots,i_k,\ldots i_{n+1})$ with $i_1=1$, $i_k=n+1$, with the property that
the sequence $i_1i_2\cdots i_k$ increasing and the sequence $i_k\cdots i_{n}
i_{n+1}$ decreasing (see \cite[Lemma 8.2]{GobWil}). To represent the
elements of $\DIV(c)$ we use $n+1$ points labeled by $i_1,
i_2,\ldots,i_k,\ldots i_{n+1}$ in clockwise order on a circle. Like in the case where
$c=s_1s_2\cdots s_n$, the elements of $\DIV(c)$ can be represented as a union
of polygons having vertices the marked points as follows: to
each cycle occurring in the decomposition of $x\in\DIV(c)$ into
a product of
disjoint cycles, one associates the polygon obtained as convex hull of
the set of points on the circle labeled by the elements in the
support of the cycle (in particular a polygon can be reduced to an edge or
even a single point). Such a polygon with vertices labelled
$(i_1,i_2,\ldots,i_k)$ in clockwise order corresponds to the cycle
$(i_1,i_2,\ldots,i_k)$ which is the unique cycle dividing $c$ with
set of vertices $\{i_1,\ldots, i_k\}$.
The polygons obtained are pairwise disjoint;
equivalently the partition defined by the cycle decomposition of $x$ is
noncrossing for this choice of labeling of the circle depending on $c$.  To
emphasize that the property to be a noncrossing partition depends on
the labeling we will speak of a noncrossing partition of the sequence
$(i_1, i_2,\ldots,i_k,\ldots i_{n+1})$.
So we have defined a bijection between $\DIV(c)$ and the set of noncrossing
partitions of the sequence $(i_1, i_2,\ldots,i_k,\ldots i_{n+1})$.

It will be convenient for the proofs 
to draw the point with label $1$ at the top of the circle, the
point with label $n+1$ at the bottom, the points with label in $i_k\cdots i_n
i_{n+1}$ on the left and the points with label in $i_1i_2\cdots i_k$ on the
right, each point having a specific height depending on its label: that is,
if $P$, $Q$ are two points with respective labels $i, j\in\{1,\dots, n+1\}$,
$i<j$, then $P$ is higher than $Q$. An example is given in Figure \ref{figure:orientation}. In case we represent a noncrossing partition we may use curvilinear polygons instead or regular polygons for a more comfortable reading (see Figure \ref{fig:noncarb} on the left).  

\begin{figure}[h!]
\begin{center}
\begin{tabular}{ccc}
& \begin{pspicture}(1,0)(6,3)
\pscircle(3,1.5){1.5}
\psdots(4.5,1.5)(3.75,2.79)(3.75,.21)(2.25,.21)(2.25,2.79)(1.5,1.5)
\psdots[dotsize=4.5pt](3.75,.21)(2.25,.21)
\psdots[dotsize=4.5pt](1.5,1.5)(2.25,2.79)(4.5,1.5)(3.75,2.79)
\rput(4.05,2.92){\textrm{{\footnotesize \textbf{3}}}}
\rput(4.8,1.5){\textrm{{\footnotesize \textbf{4}}}}
\rput(4,.03){\textrm{{\footnotesize \textbf{6}}}}
\rput(2,.03){\textrm{{\footnotesize \textbf{5}}}}
\rput(1.17,1.5){\textrm{{\footnotesize \textbf{2}}}}
\rput(1.88,2.92){\textrm{{\footnotesize \textbf{1}}}}
\end{pspicture} & \begin{pspicture}(0,0)(6,3)
\pscircle(3,1.5){1.5}

\psline[linestyle=dotted, linewidth=0.4pt](1,3)(5,3)
\psline[linestyle=dotted, linewidth=0.4pt](1,2.4)(5,2.4)
\psline[linestyle=dotted, linewidth=0.4pt](1,1.8)(5,1.8)
\psline[linestyle=dotted, linewidth=0.4pt](1,1.2)(5,1.2)
\psline[linestyle=dotted, linewidth=0.4pt](1,0.6)(5,0.6)
\psline[linestyle=dotted, linewidth=0.4pt](1,0)(5,0)

\psdots(3,3)(1.8, 2.4)(4.4696, 1.8)(4.4696, 1.2)(1.8, 0.6)(3,0)
\rput(5.2,3){\small $1$}
\rput(5.2,2.4){\small $2$}
\rput(5.2,1.8){\small $3$}
\rput(5.2,1.2){\small $4$}
\rput(5.2,0.6){\small $5$}
\rput(5.2,0){\small $6$}
\end{pspicture}
\end{tabular}
\end{center}
\caption{Example of a labeling of the vertices given by the standard Coxeter element $c=s_2 s_1 s_3 s_5 s_4=(1,3,4,6,5,2)$.}
\label{figure:orientation}
\end{figure}
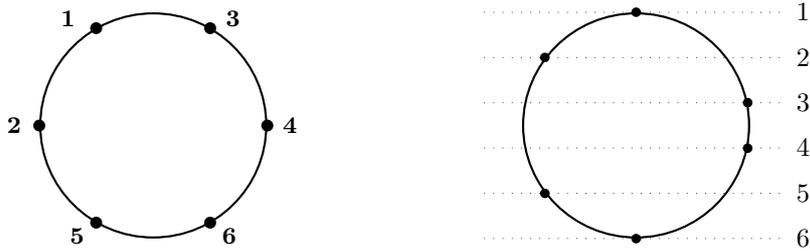

\subsubsection{Dual braids and graphical representation}\label{sec:grapha}

We follow in this part \cite{BDM}, in particular {\it loc.\ cit.} Section 1
and Corollary 3.5, and refer to it for the results.

\begin{definition}
A \emp{dual braid} is an element of the group of fractions of $B_c^*$. A \emp{simple dual braid} is a dual braid which is a simple element of $B_c^*\subset\mathrm{Frac}(B_c^*)$ as described in Section \ref{sub:dbm}.
\end{definition}

To describe the simple dual braids we consider braids where the starting points and the ending points of the strands are on two parallel circles. The points of the two circles are labeled as in Section \ref{sec:coxeterarbitr}. More precisely, we consider one unit circle in a plane with one complex coordinate at level $t=0$ and a
second unit circle in another such plane at level $t=1$. 
So the starting points
of the strands have coordinates $(z_k,0)_{k=1,\ldots,n+1}$ with $|z_k|=1$,
and the ending points have coordinates $ (z_k,1)$.  The Coxeter element is $c=(i_1,\dots, i_{n+1})$ as in Section \ref{sec:coxeterarbitr}, when along the circle in clockwise order starting from $(z_{i_1}, 0)$ the startpoints of the strands appear in the order $(z_{i_1},0)$, $(z_{i_2},0)$, \dots, $(z_{i_{n+1}},0)$, and the same for the endpoints, replacing the second coordinate by $1$.
  
For each pair $\{i,j\}$ one lifts the reflection $(i,j)$ to
the braid $\delta_{i,j}$ where the strands starting
with $(z_k,0)$ for $k\neq i,j$ has fixed first coordinate and the strands
starting with $z_i$ (resp.\ $z_j$) is represented by the path $t\mapsto
(t,(z_i+z_j)/2+(z_i-z_j)/2(\cos(\pi t)+i\varepsilon\sin(\pi t))$ (resp.\ same
with $i$ and $j$ exchanged)  where $\varepsilon$ is small enough (see Figure \ref{tresse (i,j)}).
We draw the figures with the level $t=0$ above and the level $t=1$ below. 

\begin{figure}[h!]
\resizebox{40mm}{60mm}{\input{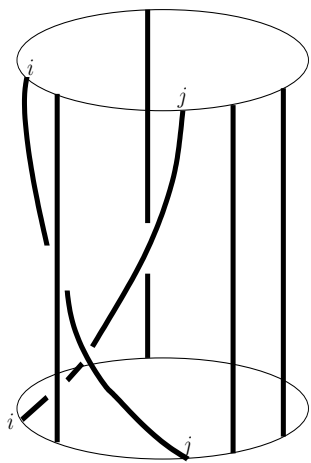}}
\caption{Braid diagram for the dual braid $\delta_{i,j}$.}\label{tresse (i,j)}
\end{figure}

These braids statisfy the relations in the dual presentation of Proposition \ref{presentation of B*} for the braid
group $\cB_{n+1}$, that is $\delta_{i,j}\delta_{j,k}=\delta_{j,k}\delta_{k,i}$
when $z_i,z_j,z_k$ are in clockwise order and $\delta_{i,j}\delta_{k,l}=\delta_{k,l}\delta_{i,j}$ when 
$i,j,k,l$ are 4 distinct points with $[i,j]$ and $[k,l]$ noncrossing (these are exactely the cases where the
product of two reflections divide the Coxeter element).
The simple dual braids are in one-to-one correspondence with the 
noncrossing partitions of the sequence $(i_1,\ldots,i_{n+1})$ as described in Section \ref{sec:coxeterarbitr}. The braid diagram associated to a simple dual braid represented by a polygon
$(z_{j_1},\ldots, z_{j_k})$ where the points are in clockwise ordering is the
product $\delta_{j_1,j_2}\delta_{j_2,j_3}\ldots\delta_{j_{k-1},j_k}$. In the
resulting braid diagram the strand starting with $(z_{j_m},0)$ ends with $(z_{j_{m-1}},1)$ for $1<m\leq k$ while the strand starting with $(z_{j_1}, 0)$ ends with $(z_{j_k}, 1)$.
(see Figure \ref{dualsimple}).
\begin{figure}[h!]
\resizebox{40mm}{60mm}{\input{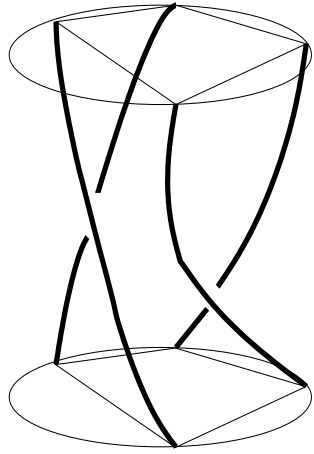}}
\caption{The braid associated to a quadrangle.}\label{dualsimple}
\end{figure}

If the Coxeter element is standard the dual braids $\delta_{i,i+1}$ satisfy the usual
braid relations and the identification of the group of fractions of the dual braid
monoid with the braid group $\cB_{n+1}$ maps $\delta_{i,i+1}$ to $\sigma_i$.
Graphically to recover the usual braid diagram of the image in $\mathcal{B}_{n+1}$ of a simple dual braid $\beta$, one has to put the points $z_{i_1},\ldots z_{i_{n+1}}$ on the
circle in such a way that the imaginary parts are in decreasing order and
``look from the right'' to the braid. This gives a diagram for an Artin braid, which is a horizontal mirror diagram of a diagram for the image in $\mathcal{B}_{n+1}$ of $\beta$. Equivalently starting from the noncrossing partition representation of $\beta$, one orders the polygons in counterclockwise order and then projects everything to the right; this gives the braid seen from the bottom (see Figure \ref{fig:noncarb}). 

The point in this description of the embedding which will be crucial later and follows from our graphical descriptions of the embedding above is that if a polygon in the noncrossing partition representation of $\beta$ has no polygon at its right (we assume that an edge and a single point are polygons), then the strands from this polygon are above all the strands coming from other polygons in the classical Artin braid representation of the image of $\beta$, as one can see in Figure \ref{fig:noncarb} with the polygon reduced to the edge labeled by $4$: on the very right, the strand labeled by $4$ appears above all the other strands. 

\begin{remark}
Note that the point where the Coxeter element needs to be standard is the
identification of $\delta_{i,i+1}$ with $\sigma_i$. If the Coxeter element is
not standard the braids $\delta_{i,i+1}$ do not satisfy the braid
relations.
\end{remark}

\begin{figure}[h!]
\psscalebox{1.08}{
\begin{pspicture}(1,-0.5)(10.9,3)
\pscircle(3,1.5){1.5}
\pscurve[linecolor=gray, linewidth=1.5pt](3,3)(2.5, 2.5)(1.8, 2.4)
\pscurve[linecolor=gray, linewidth=1.5pt](3,3)(3.55, 2.1)(4.4696, 1.8)
\pscurve[linecolor=gray, linewidth=1.5pt](3,0)(2.6,1.2)(1.8, 2.4)
\pscurve[linecolor=gray, linewidth=1.5pt](3,0)(3.5,0.9)(4.4696, 1.8)

\psline[linestyle=dotted, linewidth=0.4pt](1,3)(5,3)
\psline[linestyle=dotted, linewidth=0.4pt](1,2.4)(5,2.4)
\psline[linestyle=dotted, linewidth=0.4pt](1,1.8)(5,1.8)
\psline[linestyle=dotted, linewidth=0.4pt](1,1.2)(5,1.2)
\psline[linestyle=dotted, linewidth=0.4pt](1,0.6)(5,0.6)
\psline[linestyle=dotted, linewidth=0.4pt](1,0)(5,0)

\psline[linestyle=dotted, linewidth=0.4pt](5.4,3)(7.5,3)
\psline[linestyle=dotted, linewidth=0.4pt](5.4,2.4)(7.5,2.4)
\psline[linestyle=dotted, linewidth=0.4pt](5.4,1.8)(7.5,1.8)
\psline[linestyle=dotted, linewidth=0.4pt](5.4,1.2)(7.5,1.2)
\psline[linestyle=dotted, linewidth=0.4pt](5.4,0.6)(7.5,0.6)
\psline[linestyle=dotted, linewidth=0.4pt](5.4,0)(7.5,0)

\psdots(3,3)(1.8, 2.4)(4.4696, 1.8)(4.4696, 1.2)(1.8, 0.6)(3,0)
\rput(5.2,3){\small $1$}
\rput(5.2,2.4){\small $2$}
\rput(5.2,1.8){\small $3$}
\rput(5.2,1.2){\small $4$}
\rput(5.2,0.6){\small $5$}
\rput(5.2,0){\small $6$}

\pscurve[linecolor=gray, linewidth=1.5pt]{<-}(6.7,3)(7.2,2.4)(6.7,1.8)
\pscurve[linecolor=gray, linewidth=1.5pt]{->}(6.7,3)(6.2,2.7)(6.7,2.4)
\pscurve[linecolor=gray, linewidth=1.5pt]{<-}(6.7,0)(7.2,0.4)(5.7,1.4)(6.7,2.4)
\pscurve[linecolor=gray, linewidth=1.5pt]{->}(6.7,0)(7.1,-0.1)(7.4,0.5)(6.2,1.4)(6.7,1.8)

\psdots(6.7,0)(6.7,0.6)(6.7,1.2)(6.7,1.8)(6.7,2.4)(6.7,3)

\psline[linewidth=1.5pt](10,0)(10,3)

\pscurve[linecolor=white, linewidth=3pt](8.8,3)(9.2,2)(10.2,1.1)(10.4,0)
\pscurve[linecolor=gray, linewidth=1.5pt]{->}(8.8,3)(9.2,2)(10.2,1.1)(10.4,0)

\pscurve[linecolor=white, linewidth=3pt]{->}(10.4,3)(10.2,2)(9.4,1)(9.2,0)
\pscurve[linecolor=gray, linewidth=1.5pt]{->}(10.4,3)(10.2,2)(9.4,1)(9.2,0)

\psline[linecolor=white, linewidth=3pt](9.6,0)(9.6,3)
\psline[linewidth=1.5pt](9.6,0)(9.6,3)

\pscurve[linecolor=white, linewidth=3pt]{->}(8.4,3)(8.45,1.8)(8.7,1)(8.8,0)
\pscurve[linecolor=gray, linewidth=1.5pt]{->}(8.4,3)(8.45,1.8)(8.7,1)(8.8,0)

\pscurve[linecolor=white, linewidth=3pt]{->}(9.2,3)(9,2)(8.6,1)(8.4,0)
\pscurve[linecolor=gray, linewidth=1.5pt]{->}(9.2,3)(9,2)(8.6,1)(8.4,0)

\psdots(8.4,0)(8.8,0)(9.2,0)(9.6,0)(10,0)(10.4,0)
\psdots(8.4,3)(8.8,3)(9.2,3)(9.6,3)(10,3)(10.4,3)

\rput(8.4,-0.4){\footnotesize $1$}
\rput(8.8,-0.4){\footnotesize $2$}
\rput(9.2,-0.4){\footnotesize $3$}
\rput(9.6,-0.4){\footnotesize $4$}
\rput(10,-0.4){\footnotesize $5$}
\rput(10.4,-0.4){\footnotesize $6$}

\end{pspicture}}
\caption{Passing from a dual to a classical braid.}
\label{fig:noncarb} 
\end{figure}
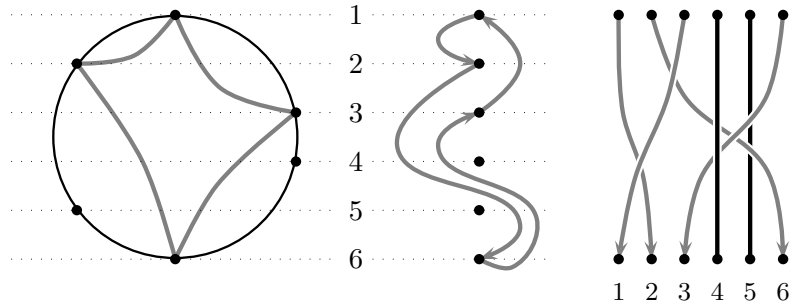

\subsubsection{Removing the strands of simple dual braids}
The aim here is to use the graphical representation of simple elements from
the previous section to show that simple elements of dual braid monoids are
rational permutation braids. The argumentation is therefore topological.

Any $\bu\in\bD_c$ always has at least one polygon. The polygons of the graphical representation correspond to the cycles (or blocks) of the corresponding permutation (or noncrossing partition). Consider any block represented by either a polygon or a point which we denote by $P$ and which is right to any other polygon or point, that is, with no other block at its right in the picture (with our conventions described above). It always exists, since the polygons are disjoint and with vertices on the circle. One can then inductively remove all the strands corresponding to edges of $P$ one after the other such that the removed strand at each step is good, starting with any strand having no other strand from $P$ at its right (it always exists and one can keep going inductively since $P$ is a polygon): such a strand is necessarily good. After having removed it one can find a strand which is good at this step, and so on.

After having removed all the strands from $P$, we go on with another polygon $Q$ which is right to all the remaining polygons, and so on, until all the strands have been removed. Hence we showed:

\begin{thm}\label{simplesmikadoa}
Let $c$ be a Coxeter element in a Coxeter group $(W,S)$ of type $A_n$. Any element of
$\bD_c\subset B_c^*\subset B(W)$ is a Mikado braid, and thus a rational permutation braid.
\end{thm}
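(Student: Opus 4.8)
The plan is to read off the Mikado property directly from the graphical model of simple dual braids built in Section~\ref{sec:grapha}, by exhibiting an explicit order in which the strands of $\bu\in\bD_c$ can be deleted so that the strand removed at each stage is good in the current diagram. By the recursive Definition~\ref{def:mikado} together with Remark~\ref{rmq:unseulbrin}, producing such a sequence witnesses, level by level, that $\bu$ is a Mikado braid; the last assertion then follows from Proposition~\ref{prop:caract}.

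Recall that $\bu$ is encoded by a noncrossing partition of the cyclically ordered points on the circle, whose blocks are drawn as pairwise disjoint polygons (an edge or a single point counting as a polygon). Since these polygons are disjoint with vertices on the circle, there is always a \emph{rightmost} one, namely a block $P$ with no other block to its right; I would single out such a $P$. The crucial input is the property emphasized at the end of Section~\ref{sec:grapha}: in the classical Artin diagram of the image of $\bu$, every strand issued from a rightmost polygon $P$ lies above all strands coming from the other polygons.

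I would then dismantle $P$ from the right. Among the strands belonging to $P$, choose the one with no other strand of $P$ to its right; since $P$ is a polygon this strand crosses only strands of $P$ situated to its left, and combined with the above property it lies over every strand it meets, hence is good. After deleting it, the remaining strands of $P$ form a smaller polygon while the other polygons are untouched, so the same selection rule again yields a good strand of $P$; continuing, I exhaust $P$, then move to a rightmost polygon $Q$ among the survivors, and repeat until no strand remains. This produces a total order on the strands in which each is good at the moment of its removal, which is precisely the recursive condition of Definition~\ref{def:mikado} verified level by level (using Remark~\ref{rmq:unseulbrin} so that a single good strand suffices at each level); hence $\bu$ is Mikado.

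The step I expect to be the main obstacle is justifying rigorously that the chosen strand is good at every intermediate stage, that is, that deleting a vertex preserves the geometric ``above'' relation among the remaining strands so that both the rightmost-polygon property and the internal rightmost-strand property survive each deletion. Once this persistence is secured the whole removal sequence goes through, so $\bu$ is Mikado, and by Proposition~\ref{prop:caract} it is a rational permutation braid.
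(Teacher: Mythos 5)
Your proposal is correct and follows essentially the same route as the paper's own proof: both arguments pick a rightmost polygon of the noncrossing-partition representation, remove its strands in right-to-left order so that each removed strand is good at its removal (using the key fact from Section~\ref{sec:grapha} that strands of a rightmost polygon lie above all strands of the other polygons), then iterate over the remaining polygons and conclude via Proposition~\ref{prop:caract}. The persistence issue you flag as the main obstacle is treated at the same informal, topological level in the paper, which simply asserts that such a strand ``is necessarily good'' at each step.
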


\subsubsection{Additional properties in case the Coxeter element is linear}
We denote by $<$ the Bruhat order on $W$. In the next proposition, we show that in case $c=s_1s_2\cdots s_n$, one can always remove a strand in a braid representing $\bu\in\bD_c$ such that the resulting braid $\bv$ lies in $\bD_{c'}$ for $c'=s_1s_2\cdots s_{n-1}$; this allows us to say a bit more in that case.

\begin{prop}
Let $c=s_1s_2\cdots s_n$, $\bu\in\bD_c$. Then $\bu\neq e$ can be written in the form $\bx^{-1}\by$ with $\bx,\by\in\bW$ and $p(\bx)< p(\by)$.
\end{prop}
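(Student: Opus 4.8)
The plan is to argue by induction on $n$, peeling off the largest point $n+1$. Since the restriction of $p$ to $\bD_c$ is a bijection onto $\DIV(c)$, we have $p(\bu)\neq e$, so $p(\by)=p(\bx)\,p(\bu)\neq p(\bx)$ and it suffices to produce $\bx,\by\in\bW$ with $\bu=\bx^{-1}\by$ and $p(\bx)\le p(\by)$ (strictness is then automatic). For the base case $n=1$ we have $\bu=\bs_1$ and may take $\bx=e$, $\by=\bs_1$. For the inductive step, look at the noncrossing partition attached to $\bu$ and let $B=\{a_1<\dots<a_{m-1}<a_m=n+1\}$ be the block containing $n+1$; in the clockwise order coming from the linear Coxeter element these vertices appear as $a_1,\dots,a_{m-1},n+1$, so the path-polygon braid of $B$ ends with the atom $\delta_{a,n+1}$, where $a:=a_{m-1}$. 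Using Proposition \ref{presentation of B*} and the fact that the factors coming from the other blocks commute past it, one writes $\bu=\bv\,\delta_{a,n+1}$, where $\bv\in\bD_{c'}$ ($c'=s_1\cdots s_{n-1}$) is the simple dual braid whose noncrossing partition is obtained from that of $\bu$ by deleting $n+1$ from $B$.

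By the induction hypothesis applied to $\bv\in\bD_{c'}$ (or directly if $\bv=e$), there are permutation braids $\bx',\by'\in\bW'$ of $W'=\langle s_1,\dots,s_{n-1}\rangle$ with $\bv=\bx'^{-1}\by'$ and $p(\bx')\le p(\by')$. The clean case is $a=n$, i.e.\ $n\in B$: then $\delta_{n,n+1}=\bs_n$, so $\bu=\bx'^{-1}(\by'\bs_n)$; as $\by'$ fixes $n+1$ it has no right descent $s_n$, whence $\by:=\by'\bs_n\in\bW$, and with $\bx:=\bx'$ one gets $p(\bx)=p(\bx')\le p(\by')<p(\by')s_n=p(\by)$. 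In the general case $a<n$ I would write the atom as the band generator $\delta_{a,n+1}=\bw\,\bs_a\,\bw^{-1}$ with $\bw=\bs_n\bs_{n-1}\cdots\bs_{a+1}$, and then push the computation through using repeatedly that each staircase $s_n s_{n-1}\cdots s_k$ is a minimal length representative of $W'\backslash W$ (the left-reducedness already exploited in the proof of Proposition \ref{prop:caract}). The aim is to reorganise $\bv\,\delta_{a,n+1}$, by braid moves and Lemma \ref{good=>permutation}, into a decomposition $\bu=\bx^{-1}\by$ with $\bx,\by\in\bW$ whose images have the staircase form $p(\bx)=p(\bx')\,\rho$ and $p(\by)=p(\by')\,\rho'$ for two such minimal coset representatives $\rho=s_n\cdots s_i$ and $\rho'=s_n\cdots s_j$ with $i\ge j$; the membership $\bx,\by\in\bW$ is then guaranteed, as before, by left-reducedness. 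When $\bv=e$ the same recipe degenerates to exhibiting $\bx,\by$ directly from a reduced word of the reflection $(a,n+1)$, lifted through Lemma \ref{good=>permutation}.

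The last step is the comparison $p(\bx)\le p(\by)$, i.e.\ $p(\bx')\,\rho\le p(\by')\,\rho'$. Here one feeds in $p(\bx')\le p(\by')$ from the induction, the inequality $\rho\le\rho'$ (equivalent to $i\ge j$, since $s_n\cdots s_i$ is a suffix, hence a subword, of $s_n\cdots s_j$), and the special chain structure of the representatives $s_n\cdots s_k$: one lifts a reduced word of $p(\by')$ to one containing a reduced word of $p(\bx')$ as a subword (possible by the subword characterisation of Bruhat order), appends the staircases, and reads off $p(\bx)$ as a subword of $p(\by)$ via the standard subword and lifting properties of the Bruhat order.

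I expect the genuine obstacle to be exactly this final Bruhat bookkeeping in the case $a<n$: unlike the adjacent case, the long band generator $\delta_{a,n+1}$ interleaves positive and negative letters, so one must make sure the extra staircase attaches to the $\by$-side and that right multiplication by these particular coset representatives is compatible with the inductively given inequality $p(\bx')\le p(\by')$. Verifying $i\ge j$ and that $p(\bx')\le p(\by')$ together with $\rho\le\rho'$ forces $p(\bx')\rho\le p(\by')\rho'$ for these staircase representatives is where the combinatorics of type $A$ (the subword property together with the chain $s_n<s_ns_{n-1}<\cdots$) must be used carefully.
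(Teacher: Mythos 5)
Your factorization $\bu=\bv\,\delta_{a,n+1}$ with $\bv\in\bD_{c'}$ is correct (block braids of a noncrossing partition commute), your clean case $a=n$ is complete, and your closing Bruhat lemma (if $u\le v$ in $W'=\langle s_1,\dots,s_{n-1}\rangle$, then $u(s_n\cdots s_i)\le v(s_n\cdots s_j)$ for $i\ge j$, both products being length-additive) is true; it is essentially what the paper uses as well. The gap is the case $a<n$, which you leave as a plan, and it is not ``final Bruhat bookkeeping'': the decomposition you aim for, namely $\bu=\bx^{-1}\by$ with $p(\bx)=p(\bx')(s_n\cdots s_i)$ and $p(\by)=p(\by')(s_n\cdots s_j)$, where $\bx',\by'$ are the pieces produced by induction applied to $\bv$, does not exist in general. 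Take $n=3$, $c=s_1s_2s_3$, and $u=(1,4)(2,3)$, with blocks $\{1,4\}$ and $\{2,3\}$; then $a=1$, $\bv=\delta_{2,3}=\bs_2$, so $\bx'=e$, $\by'=\bs_2$, and your scheme requires $p(\bx)\in\{e,s_3,s_3s_2,s_3s_2s_1\}$ and $p(\by)\in s_2\cdot\{e,s_3,s_3s_2,s_3s_2s_1\}$ with $p(\bx)^{-1}p(\by)=(1,4)(2,3)$. Checking the sixteen pairs shows this is impossible already at the level of permutations; in fact one computes $\bu=\delta_{2,3}\delta_{1,4}=(\bs_2\bs_3)^{-1}(\bs_1\bs_2\bs_3\bs_2)$, and neither $s_2s_3$ nor $s_1s_2s_3s_2$ has your product form. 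The structural reason is that multiplying $\bv=\bx'^{-1}\by'$ on the right by the long band generator $\delta_{a,n+1}$ is not a (near-)conjugation by staircases, so it scrambles the inductive pieces rather than extending them.

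This is precisely where the paper's induction differs: it never peels off the edge $(a,n+1)$. Instead it uses that for the linear Coxeter element either some block of $u$ is a singleton $\{i\}$, in which case the $i$-th strand is unbraided and above all others, giving $\bu=(\bs_n\cdots\bs_i)^{-1}\bu'(\bs_n\cdots\bs_i)$ with $\bu'\in\bD_{c'}$, or no block is a singleton, in which case some block has an edge $(i-1,i)$ between \emph{adjacent} labels, giving $\bu=(\bs_n\cdots\bs_i)^{-1}\bu'(\bs_n\cdots\bs_{i-1})$ with $\bu'\in\bD_{c'}$ obtained by contracting that edge. In both cases the staircases attach simultaneously to the two inductive pieces (identical staircases, or staircases differing by one final letter), so left-reducedness of $s_n\cdots s_k$ with respect to $W'$ gives $\bx,\by\in\bW$ and your Bruhat lemma then applies verbatim; salvaging your route would require genuinely resolving the interaction between $\delta_{a,n+1}$ and $\by'$, not just comparing images. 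A secondary point: with the paper's conventions ($\delta_{i,i+1}=\bs_i$ and $\bT_c$ equal to the set of components of the Hurwitz orbit of $(\bs_1,\dots,\bs_n)$, cf.\ Proposition \ref{prop:formula_dual_atoms}), the atom is $\delta_{a,n+1}=(\bs_a\bs_{a+1}\cdots\bs_{n-1})\bs_n(\bs_a\cdots\bs_{n-1})^{-1}=(\bs_{a+1}\cdots\bs_n)^{-1}\bs_a(\bs_{a+1}\cdots\bs_n)$; your formula $(\bs_n\cdots\bs_{a+1})\bs_a(\bs_n\cdots\bs_{a+1})^{-1}$ is the band passing on the other side, a different braid which does not lie in $\bT_c$ (already for $n=2$ one has $\bs_2\bs_1\bs_2^{-1}\ne\bs_1\bs_2\bs_1^{-1}$).
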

\begin{proof}
We argue by induction on the rank. We prove the statement with the inequality $<$ replaced by $\leq$. It is then clear that the inequality is an equality if and only if $\bu=e$. The result is trivially true for $n=1$.  Assume $n>1$. With our choice of Coxeter element, all indices lie on the right part of the circle. If there is a block of $u$ reduced to a single index $i$, then the $i$\ts{th} strand of $\bu$ must be unbraided, and moreover since all the points lie on the right of the circle, the $i$\ts{th} strand lies over all the other strands. We then argue as in the proof of Proposition \ref{prop:caract}; that is, one has $\bu=\bs_i^{-1}\bs_{i+1}^{-1}\dots \bs_n^{-1} \bu' \bs_n \bs_{n-1}\dots \bs_i$ where $\bu'\in \mathcal{B}_n$ is the braid obtained from $\bu$ by removing the $i$\ts{th} strand.  But $\bu'$ is then in $\bD_{c'}$, where $c'=s_1\cdots s_{n-1}$: indeed, it is graphically obtained from the noncrossing representation of $\bu$ by removing the point with label $i$ and subtracting $1$ from each label larger than $i$, still yielding a diagram of a simple element $\bu'$ for the smaller linear Coxeter element $c'$. By induction $\bu'=\bx'^{-1}\by'$ with $p(\bx')<p(\by')$. Since $s_n\cdots s_i$ is left-reduced with respect to the parabolic subgroup $\left\langle s_1,\dots, s_{n-1}\right\rangle$ we get that $p(\bx'\bs_n\cdots \bs_i)<p(\by' \bs_n\cdots \bs_i)$, hence we have the claimed property with $\bx=\bx'\bs_n\cdots \bs_i$, $\by=\by'\bs_n\cdots \bs_i$. 

Hence we can assume that there is no unbraided strand. But in that case, since the Coxeter element is linear, there must be a polygon in the graphical representation of $\bu$ having an edge $(i-1,i)$, that is an $i$\ts{th} strand ending at $i-1$ which is over all the other strands that it crosses.  We argue as above, writing this time $\bu=\bs_i^{-1}\bs_{i+1}^{-1}\dots \bs_n^{-1} \bu' \bs_n \bs_{n-1}\dots \bs_{i-1}$. The element $\bu'$ is the simple element for $c'$ obtained by contracting the edge $(i-1,i)$, hence also identifying the points with labels $i$ and $i-1$ and subtracting $1$ from any point with label bigger than $i$. Arguing as above we get the claim.  
\end{proof}

\section{Mikado braids of type $B_n$}\label{sec:b}

\subsection{Coxeter and Artin-Tits groups of type $B_n$}

The Coxeter group $W$ of type $B_n$ is the group of fixed points in the
Coxeter group $W'$ of type $A_{2n-1}$ under the diagram automorphism $\tau$ which
exchanges $s_i$ and $s_{2n-i}$. This fact lifts to the corresponding Artin-Tits
groups (see \eg, \cite[Corollary $4.4$]{Jean}). So we shall see  the Artin-Tits
group $B(W)$ of type $B_n$ as the group of fixed points in the Artin-Tits group of type $A_{2n-1}$ under the diagram automorphism  which we still denote by $\tau$,  that is, $B(W')^{\tau}=B(W)$.
Topologically this means that the Artin-Tits group $B(W)$ can be seen as
the group of symmetric braids in $\cB_{2n}$ (the symmetry exchanges the
$i$\ts{th} and the $(2n+1-i)$\ts{th} strands and exchanges under-/over-crossings). 

We write $\mathcal{B}_{2n}^B$ for the braids of type $B_n$ identified with 
$\mathcal{B}_{2n}^\tau$. For convenience, since $\mathcal{B}_{2n}^B$ consists of symmetric braids, we label the
strands of $\cB_{2n}$ by $-n, -n+1,\dots, -1,1,\dots, n$ rather than by $1,2,\dots, 2n$. We write $t_0,\dots, t_{n-1}$ for the Coxeter generators of $W$ with relations

\begin{center}
$\begin{array}{lcll}
t_0t_1t_0t_1&=&t_1t_0t_1t_0, & \\
t_i t_{i+1} t_i&=&t_{i+1}t_i t_{i+1} & \text{if }1\leq i\leq n-2,\\
t_i t_j&=&t_j t_i & \text{if }|i-j|>1.
\end{array}$
\end{center}
Inside $W'$ one has  $t_0=s_n$ and $t_i=s_is_{2n-i}$ for $i=1,\ldots,
n-1$, where 
$s_1=(-n,-n+1)$, $s_2=(-n+1,-n+2)$, \dots, $s_n=(-1,1)$, $s_{n+1}=(1,2)$,
\dots, $s_{2n-1}=(n-1,n)$  in  $\mathfrak{S}_{2n}$ seen as the group of
permutations of $\{-n,\ldots,-1,1,\ldots,n\}$.

This lifts to the braid groups: the generators of $B(W)$ seen in
$\cB_{2n}$ are $\bt_0=\bs_n$ and $\bt_i=\bs_i\bs_{2n-i}$ for
$i=1,\ldots,n-1$.

\subsection{Mikado braids of type $B_n$}\label{sec:mikbrb}

\begin{defn}
A braid $\beta\in \mathcal{B}_{2n}^B$ is a \emp{Mikado braid} (of type
$B_n$) if when viewed in $\cB_{2n}$, starting from any diagram $D$ for
$\beta$ one can inductively remove pairs of symmetric strands, one being
above all the other strands (so that its symmetric is under all the
other strands).
\end{defn}
We write $\mb$ for the set of Mikado braids of type $B_n$. It follows from
the definition that
$\mb=\cB_{2n}^B\cap\mathcal{B}_{2n}^{\mathrm{Mik}}$. Hence we can use the geometric properties of Mikado braids of type $A_n$ given in Section \ref{sec:mikbr}.

\begin{prop}\label{prop:caractb}
Let $\beta\in\mathcal{B}_{2n}^B$; then $\beta$ is a Mikado braid of type
$B_n$ if and only if $\beta=\bx^{-1}\by$ for some $\bx, \by\in\bW$, hence by
Proposition \ref{prop:gperm} if and only if $\beta$ a rational permutation braid.  
\end{prop}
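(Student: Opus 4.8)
The plan is to reduce everything to the type $A_{2n-1}$ result (Proposition \ref{prop:caract}) through the identification $B(W)=B(W')^{\tau}\subseteq\mathcal{B}_{2n}$, the only genuinely new ingredient being a dictionary between type $B_n$ and symmetric type $A_{2n-1}$ simple elements. Concretely, I would first record the identity
\[
\bW \;=\; \mathcal{B}_{2n}^{B}\cap \bW',
\]
where $\bW'$ denotes the permutation braids of the classical type $A_{2n-1}$ Garside structure. The inclusion $\subseteq$ rests on the coincidence of the two Garside elements, $\Delta_{B}=\Delta_{A}=\bw_0$: the longest element $w_0$ of $W$, realized as the signed permutation $i\mapsto -i$, is precisely the longest element of $\mathfrak S_{2n}$, and expanding a reduced word for it in $\bt_0,\dots,\bt_{n-1}$ into the $\bs_i$ yields an $A$-reduced word (a length count, using that each $\bt_i$ occurs $n$ times so that $\bt_0$ contributes $n$ single letters and the $A$-length is $2n^2-n=\ell_A(w_0)$); hence $\Delta_B$ is $\tau$-fixed and square-free in type $A$. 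Since moreover $B^+(W)\subseteq B^+(W')$, any left-divisor of $\Delta_B$ in $B^+(W)$ is a left-divisor of $\Delta_A$ in $B^+(W')$, giving $\subseteq$. For $\supseteq$, a $\tau$-fixed $\bv\in\bW'$ projects to some $v\in W'^{\tau}=W$; its type $B_n$ lift $\bv_B\in\bW$ is, by the inclusion just proved, a type $A_{2n-1}$ simple projecting to the same $v$, and as $p\colon\bW'\to\mathfrak S_{2n}$ is a bijection we conclude $\bv=\bv_B\in\bW$.

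Granting this identity, the backward implication is immediate: if $\beta=\bx^{-1}\by$ with $\bx,\by\in\bW$, then $\bx,\by\in\bW'$, so Proposition \ref{prop:caract} gives $\beta\in\mathcal{B}_{2n}^{\mathrm{Mik}}$; since $\beta$ is also symmetric, $\beta\in\mathcal{B}_{2n}^{B}\cap\mathcal{B}_{2n}^{\mathrm{Mik}}=\mb$, i.e. $\beta$ is a Mikado braid of type $B_n$.

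For the forward implication I would avoid redoing the geometric induction of Proposition \ref{prop:caract} and instead exploit uniqueness of normal forms. Let $\beta\in\mb=\mathcal{B}_{2n}^{B}\cap\mathcal{B}_{2n}^{\mathrm{Mik}}$. By Proposition \ref{prop:caract}, $\beta=\bx'^{-1}\by'$ with $\bx',\by'\in\bW'$; dividing out their left gcd I may assume $\bx'$ and $\by'$ have no common left divisor, which keeps them in $\bW'$ since a right divisor of a simple is again simple. Now $\tau$ is a monoid automorphism of $B^+(W')$ permuting the $\bs_i$, so it preserves $\bW'$, preserves left-divisibility and hence coprimality, and fixes $\Delta_A$. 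As $\beta$ is $\tau$-fixed, $(\tau(\bx'),\tau(\by'))$ is again a coprime decomposition of $\beta$ into type $A$ simples, whence by uniqueness $\tau(\bx')=\bx'$ and $\tau(\by')=\by'$. Thus $\bx',\by'\in\mathcal{B}_{2n}^{B}\cap\bW'=\bW$, and $\beta=\bx'^{-1}\by'$ is the required expression; the final equivalence with rational permutation braids is then Proposition \ref{prop:gperm}.

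The main obstacle is precisely the dictionary $\bW=\mathcal{B}_{2n}^{B}\cap\bW'$, and inside it the equality $\Delta_B=\Delta_A$ together with the monoid inclusion $B^+(W)\subseteq B^+(W')$ (the group-level statement $B(W)=B(W')^{\tau}$ being \cite[Corollary 4.4]{Jean}); once this is in place both implications are short. A purely topological alternative, mirroring Proposition \ref{prop:caract}, would induct on $n$ by removing a symmetric pair of strands (the over-strand together with its under-partner) and rebuilding $\beta$ from the resulting type $B_{n-1}$ braid using the generators $\bt_i$; there the delicate point is the bookkeeping of the central generator $\bt_0=\bs_n$ when the removed pair lies on opposite sides of the axis and must cross it, and the verification that the pull-out factors are the minimal-length representatives of the cosets of the parabolic $\langle \bt_0,\dots,\bt_{n-2}\rangle$.
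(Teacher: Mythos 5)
Your proof is correct and is essentially the paper's own argument: both reduce to the type $A_{2n-1}$ statement (Proposition \ref{prop:caract}) and then combine uniqueness of the left-coprime fraction decomposition (existence of gcds in the spherical Artin-Tits monoid) with the fact that $\tau$ is a monoid automorphism to conclude that $\bx$ and $\by$ are individually $\tau$-fixed, hence lie in $\bW^{\prime\tau}=\bW$. The only place you go beyond the paper is in proving the dictionary $\bW=\mathcal{B}_{2n}^{B}\cap\bW'$, which the paper simply invokes as part of the standard identification $B(W)=B(W')^{\tau}$; within that argument, note that the correct counting statement is that the letter $\bt_0$ occurs exactly $n$ times in any reduced word for $w_0$ (since exactly $n$ of the inverted reflections are sign changes), not that every $\bt_i$ occurs $n$ times, though this is precisely the fact your length count $2n^2-n$ actually requires.
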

Note that here $\bW$ refers to the lift of the Coxeter group $W$ of type
$B_n$ in the braid group of type $B_n$.

\begin{proof}

A braid $\beta\in\cB_{2n}$ is in $\mb$ if and only if it is a Mikado braid in
$\mathcal{B}_{2n}$ and it is fixed by $\tau$. By Proposition \ref{prop:caract}, this is equivalent to 
$\beta$ being of the form $\bx\inv \by$ with $\bx$ and $\by$ in $\bW'$ and
$\beta$ fixed by $\tau$. Since there are gcds in Artin-Tits monoids of spherical
type, the expression $\bx\inv\by$ is unique under the condition that $\bx$ and
$\by$ have no common left-divisor (in the Artin-Tits monoid of type $A_{2n-1}$).
Since $\tau$ induces an automorphism of the monoid, if $\bx$ and $\by$ satisfy the unicity
condition,  then $\bx\inv \by$ is $\tau$-fixed if and only if each one of
$\bx$ and $\by$ is $\tau$-fixed, that is, is in $\bW^{\prime\tau}=\bW$.
\end{proof}

This gives in particular a non-inductive algebraic characterization of
rational permutation braids of type $B_n$. To summarize, putting Propositions \ref{prop:gperm} and \ref{prop:caractb} together we get:
\begin{thm}\label{thm:caractb}
Let $\beta\in B(W)\cong \mathcal{B}_{2n}^B$. The following are equivalent:
\begin{enumerate}
\item The braid $\beta$ is a Mikado braid.
\item The braid $\beta$ is a rational permutation braid.
\item There exist $\bx, \by\in\bW$ such that $\beta=\bx^{-1}\by$.
\item There exist $\bx,\by\in\bW$ such that $\beta=\bx\by^{-1}$.
\end{enumerate}
\end{thm}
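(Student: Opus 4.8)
The plan is to read off the result by assembling the two propositions already established, since the statement is nothing more than their conjunction specialised to type $B_n$. First I would invoke Proposition \ref{prop:gperm}: it is stated for an arbitrary finite Coxeter system and hence applies verbatim to $W$ of type $B_n$, so that its three items yield at once the equivalence of conditions $(2)$, $(3)$ and $(4)$ of the theorem (its items $(1)$, $(2)$, $(3)$ being respectively our $(2)$, $(3)$, $(4)$). This reduces everything to tying the Mikado condition $(1)$ to any one of the remaining three, and the most convenient target is $(3)$.

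Next I would appeal to Proposition \ref{prop:caractb}, which states precisely that for $\beta\in\mathcal{B}_{2n}^B$ one has that $\beta$ is a Mikado braid of type $B_n$ if and only if $\beta=\bx\inv\by$ with $\bx,\by\in\bW$; this is exactly the equivalence $(1)\Leftrightarrow(3)$. Chaining it with the previous step produces the full cycle $(1)\Leftrightarrow(2)\Leftrightarrow(3)\Leftrightarrow(4)$, which is the assertion of the theorem. No further argument is required.

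I do not expect any genuine obstacle at this level, because all the substance has already been absorbed into Proposition \ref{prop:caractb}. The step I would single out as the real heart of the matter---already carried out there---is the descent from the type $A_{2n-1}$ characterisation of Proposition \ref{prop:caract} to type $B_n$: it rests on the unique left-coprime factorisation $\beta=\bx\inv\by$ in the Artin-Tits monoid of type $A_{2n-1}$, together with the fact that the diagram automorphism $\tau$ is a monoid automorphism. These two facts force a $\tau$-fixed braid $\beta$ to have both factors $\tau$-fixed, i.e.\ $\bx,\by\in\bW^{\prime\tau}=\bW$. With that in hand, the present theorem is a purely formal bookkeeping assembly of the two cited propositions.
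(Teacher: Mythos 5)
Your proposal is correct and is exactly the paper's own argument: the theorem is stated there as a summary obtained by ``putting Propositions \ref{prop:gperm} and \ref{prop:caractb} together,'' i.e.\ Proposition \ref{prop:gperm} gives $(2)\Leftrightarrow(3)\Leftrightarrow(4)$ and Proposition \ref{prop:caractb} gives $(1)\Leftrightarrow(3)$. Your closing remark about the $\tau$-fixed coprime factorisation also accurately identifies where the real work was done, namely in the proof of Proposition \ref{prop:caractb}.
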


\begin{problem}\label{enumerationB}
What is the number of Mikado braids of type $B_n$? Unlike in type $A_n$ (see Remark \ref{enumerationA}), we have no hint towards an answer to this question. 
\end{problem}

\subsection{Simple dual braids are Mikado braids}

\subsubsection{Graphical representation of simple elements}

Through the embbeding $W=W^{\prime\tau}\subset W'$, the standard Coxeter
elements of $W$ identify with the $\tau$-fixed standard Coxeter elements of $W'$. Moreover if $T'$ denotes
the set of reflections of $W'$ and $T$ the set of reflections of $W$, and if $c$
is a $\tau$-fixed standard Coxeter element of $W'$, then for $x$
and $y$ in $W$ one has $x\ldiv_T y\ldiv_T c$ if and only if $x\ldiv_{T'}
y\ldiv_{T'} c$ (see \cite[Lemma 4.8]{kapi}).
\begin{remark}
Note that a reflection of $W$ is not a reflection of $W'$ in general.
\end{remark}

Since the dual braid monoid of type $A_{2n-1}$ is defined by a presentation with generators the left-divisors of the chosen Coxeter
element $c$ and with relations given by $\ell_{T'}$-shortest decompositions
of $c$, the above observations imply that the dual braid monoid of type $B_n$ identifies with the fixed points 
of $\tau$ in the dual braid monoid
of type $A_{2n-1}$ for the same  Coxeter element.

The $\tau$-fixed standard Coxeter elements of $W'$ are the $2n$-cycles of the
form $c=(i_1, i_2,\dots, i_n, -i_1, -i_2,\dots, -i_n)$ with $\{i_1,i_2,\dots, i_n\}=\{1,2,\dots, n\}$ and the sequence $i_1i_2\cdots i_n$ first increasing, then decreasing. 

We also get that the elements of $\DIV(c)$ for a Coxeter element  $c\in W$ are in
one-to-one correspondence with the $\tau$-fixed noncrossing partitions of 
$2n$ points labelled $-n,\ldots,-1,1,2,\ldots,n$ on a circle with the
clockwise order on the labels in the order given by $c$ (see Figure \ref{figure:orientationb}).  

These symmetric partitions are called \textit{noncrossing partitions of type $B_n$} associated to $c$ (see \cite{Reiner}).

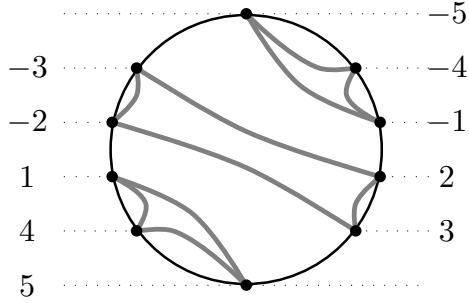
\begin{figure}[h!]
\begin{center}
\psscalebox{1.2}{
\begin{pspicture}(0,0)(6,3)
\pscircle(3,1.5){1.5}

\pscurve[linecolor=gray, linewidth=1.5pt](3,3)(3.6, 2.2)(4.4696, 1.8)
\pscurve[linecolor=gray, linewidth=1.5pt](3,3)(3.8,2.4)(4.2,2.4)
\pscurve[linecolor=gray, linewidth=1.5pt](4.4696, 1.8)(4.1,2.1)(4.2,2.4)

\pscurve[linecolor=gray, linewidth=1.5pt](3,0)(2.4, 0.8)(1.5304,1.2)
\pscurve[linecolor=gray, linewidth=1.5pt](3,0)(2.2,0.6)(1.8,0.6)
\pscurve[linecolor=gray, linewidth=1.5pt](1.8,0.6)(1.9,0.9)(1.5304,1.2)

\pscurve[linecolor=gray, linewidth=1.5pt](4.4696, 1.2)(3,1.7)(1.8,2.4)
\pscurve[linecolor=gray, linewidth=1.5pt](4.2, 0.6)(3,1.3)(1.5304,1.8)
\pscurve[linecolor=gray, linewidth=1.5pt](4.2, 0.6)(4.2,0.9)(4.4696, 1.2)
\pscurve[linecolor=gray, linewidth=1.5pt](1.5304,1.8)(1.8,2.1)(1.8,2.4)

\psline[linestyle=dotted, linewidth=0.4pt](1,3)(5,3)
\psline[linestyle=dotted, linewidth=0.4pt](1,2.4)(1.8,2.4)
\psline[linestyle=dotted, linewidth=0.4pt](4.2,2.4)(5,2.4)
\psline[linestyle=dotted, linewidth=0.4pt](1,1.8)(1.5304,1.8)
\psline[linestyle=dotted, linewidth=0.4pt](4.4696,1.8)(5,1.8)
\psline[linestyle=dotted, linewidth=0.4pt](1,1.2)(1.5304,1.2)
\psline[linestyle=dotted, linewidth=0.4pt](4.4696,1.2)(5,1.2)
\psline[linestyle=dotted, linewidth=0.4pt](1,0.6)(1.8,0.6)
\psline[linestyle=dotted, linewidth=0.4pt](4.2,0.6)(5,0.6)
\psline[linestyle=dotted, linewidth=0.4pt](1,0)(5,0)

\psdots(3,3)(1.8, 2.4)(4.2,2.4)(4.4696, 1.8)(1.5304,1.8)(4.4696, 1.2)(1.5304,1.2)(1.8, 0.6)(4.2,0.6)(3,0)
\rput(5.2,3){\small $-5$}
\rput(5.2,2.4){\small $-4$}
\rput(5.2,1.8){\small $-1$}
\rput(5.2,1.2){\small $2$}
\rput(5.2,0.6){\small $3$}
\rput(0.6,0){\small $5$}

\rput(0.6,2.4){\small $-3$}
\rput(0.6,1.8){\small $-2$}
\rput(0.6,1.2){\small $1$}
\rput(0.6,0.6){\small $4$}
\end{pspicture}}

\end{center}
\caption{Example of a labeling of the vertices given by the Coxeter element
$c=t_1 t_2 t_0 t_4 t_3$ and of a noncrossing partition of type $B_5$ for this Coxeter element. The
corresponding element of $\DIV(c)$ is $(-5,-4,-1)(5,4,1)(2,3,-2,-3)$.}
\label{figure:orientationb}
\end{figure}

Such a partition corresponds to a $\tau$-fixed braid in $\cB_{2n}$ by the
same process as explained in Subsection \ref{sec:grapha}.

\subsubsection{Removing pairs of strands of simple dual braids}
The argumentation to show inductively that any simple dual braid $\bx\in\bD_c$ of type $B_n$ is a Mikado braid can be led as in type $A_n$, that is, by looking at the graphical representation of $\bx$, equivalently the number of blocks of the noncrossing partition $x\in\DIV(c)$, and removing pairs of strands. We start from a polygon $P$ which is to the right of any other polygon. Either $P$ has a symmetric polygon $\bar{P}$ such that $j$ indexes a vertex of $P$ if and only if $-j$ indexes a vertex of $\bar{P}$; or $P$ is its own symmetric, in particular $P$ has vertices on both sides of the circle and an index $j$ indexes a vertex of $P$ if and only if $-j$ indexes a vertex of $P$. 

In the first case, we remove inductively pairs of strands of $P$, one corresponding to an edge of $P$ and its symmetric which is an edge of $\bar{P}$ as we did in type $A_n$, in such an order that at each step, the removed strand from $P$ is above all the others (which implies that the corresponding symmetric strand which is also removed is below all the others), that is, the corresponding edge should be right to any other edge of $P$. In the second case, we do exactly the same, except that pairs of strands of $P$ are removed simultaneously. 

After having removed all the strands from $P$, we go on with another polygon $Q$ which is right to all the remaining polygons, and so on, until all the strands have been removed. Hence we showed:

\begin{thm}\label{simplesmikadob}
Let $c$ be a Coxeter element in a Coxeter group $(W,S)$ of type $B_n$. Any element of
$\bD_c\subset B_c^*\subset B(W)$ is a Mikado braid of type $B_n$, and thus a
rational permutation braid.
\end{thm}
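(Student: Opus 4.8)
The plan is to run the type $A_n$ argument of Theorem \ref{simplesmikadoa} inside $\mathcal{B}_{2n}$, but to strip strands in $\tau$-symmetric pairs so that every intermediate braid stays fixed by $\tau$ and hence genuinely lives in $\mathcal{B}_{2n}^B$. Recall that a Mikado braid of type $B_n$ is one from which, viewed in $\mathcal{B}_{2n}$, one can inductively remove a pair of symmetric strands, one lying above all the other strands (so that its mirror lies below all of them). By the identification $B(W)=B(W')^\tau$ and the fact, established just above, that the dual braid monoid of type $B_n$ is the set of $\tau$-fixed points of the dual braid monoid of type $A_{2n-1}$ for the same $\tau$-fixed standard Coxeter element $c$, any $\bx\in\bD_c$ of type $B_n$ is a $\tau$-fixed simple dual braid of type $A_{2n-1}$. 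Its noncrossing partition $x\in\DIV(c)$ is drawn on the $2n$ marked points $-n,\dots,-1,1,\dots,n$ as a $\tau$-symmetric union of pairwise disjoint polygons, exactly as in Section \ref{sec:grapha}.

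I would then argue by induction on $n$. First choose a polygon $P$ lying to the right of every other polygon; this exists since the polygons are disjoint with vertices on the circle. By the key geometric fact recalled in Section \ref{sec:grapha}, the strands issuing from the vertices of $P$ lie above all strands coming from the other polygons in the classical braid diagram. Distinguish two cases according to whether $P$ coincides with its mirror $\bar P$ (obtained by negating all labels) or not. If $\bar P\neq P$, I remove the strands of $P$ one edge at a time, starting from the rightmost edge of $P$ (whose strand is always good), and simultaneously remove the mirror edge of $\bar P$, whose strand is correspondingly below all the others by $\tau$-symmetry. If $\bar P=P$, the polygon straddles both halves of the circle and its vertices come in $\pm$ pairs, so I strip its edges off in symmetric pairs simultaneously. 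In either case, after all strands of $P$ (and $\bar P$) have been removed, what remains is again a $\tau$-symmetric noncrossing partition, that is, the graphical representation of a simple dual braid for a dual braid monoid of smaller rank of type $B$, and the induction proceeds until no strand is left. This shows $\bx\in\mb$, and then $\bx$ is a rational permutation braid immediately by Proposition \ref{prop:caractb}.

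The hard part will be the bookkeeping that guarantees, at each step, that the strand removed from $P$ is good (above all the strands it crosses) while its mirror is below all the strands it crosses: this rests on the geometric input from Section \ref{sec:grapha} combined with $\tau$-symmetry. The self-symmetric case $P=\bar P$ is the most delicate, since there one removes a top strand together with its bottom mirror at once and must check both that the two removals are consistent and that the leftover configuration is still $\tau$-symmetric, noncrossing, and the set of divisors of a $\tau$-fixed Coxeter element of smaller rank, so that the recursion is legitimate. A cleaner route that avoids re-running the induction is to invoke Theorem \ref{simplesmikadoa} directly: since $\bx$, viewed in $\mathcal{B}_{2n}$, is a type $A$ Mikado braid and is $\tau$-fixed, it lies in $\mathcal{B}_{2n}^B\cap\mathcal{B}_{2n}^{\mathrm{Mik}}=\mb$. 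The only point to check for this shortcut is precisely the compatibility between single-strand removal in type $A$ and symmetric-pair removal in type $B$, which is the same obstacle reappearing in the justification of that set equality.
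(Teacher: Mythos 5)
Your proposal is correct and follows essentially the same route as the paper's proof: identify the type $B_n$ simple dual braids with the $\tau$-fixed simple dual braids of type $A_{2n-1}$ for the same Coxeter element, represent them by $\tau$-symmetric noncrossing partitions, pick a polygon $P$ lying to the right of all others, and remove strands in symmetric pairs, distinguishing the cases $\bar{P}\neq P$ and $\bar{P}=P$, then iterate with the next rightmost polygon. Your closing ``shortcut'' via Theorem \ref{simplesmikadoa} is also consistent with the paper's framework, since the paper itself asserts $\mb=\cB_{2n}^B\cap\mathcal{B}_{2n}^{\mathrm{Mik}}$ as following from the definitions, but the paper opts (as you essentially do) for the direct symmetric-pair removal argument rather than relying on that set equality.
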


\section{Dihedral and exceptional types}\label{sec:exceptional}

\begin{theorem}\label{simplesmikadoexc}
Let $c$ be a Coxeter element in a Coxeter system $(W,S)$ of type
$H_3$, $H_4$, $I_2(m)$ ($m\geq 3$), $E_n$ ($n=6,7,8$) or $F_4$; any element of
$\bD_c\subset B_c^*\subset B(W)$ is a rational permutation braid.
\end{theorem}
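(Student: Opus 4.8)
The plan is to separate the one infinite family $I_2(m)$, which I would treat by a direct hand computation, from the finitely many groups $H_3,H_4,E_6,E_7,E_8,F_4$, which I would dispose of by machine. In every case the goal, by Proposition~\ref{prop:gperm}, is to show that each simple dual element $\bu\in\bD_c$, once pushed into $B(W)$, can be written $\bx^{-1}\by$ with $\bx,\by\in\bW$, equivalently that it lies in the interval $[\Delta^{-1},\Delta]$ for $\ldiv$. I would obtain the image of $\bu$ in $B(W)$ by picking a $T$-reduced factorization $u=t_1\cdots t_k$ of the associated element $u\in\DIV(c)$ and replacing each $t_j$ by its lift in $\bT_c$; this is well defined by Corollary~\ref{hurwitz in B}, and the lifts of reflections are given explicitly by Proposition~\ref{prop:formula_dual_atoms}.

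For $I_2(m)$ the computation is short because $\lt(c)=2$: the set $\DIV(c)$ consists only of the identity, of the reflections dividing $c$, and of $c$ itself, and in a dihedral group every reflection $r$ divides any Coxeter element $c$ since $rc$ is again a reflection. Thus there are exactly $m+2$ simple dual elements. The identity and the lift $\bc$ are rational permutation braids at once, as $e,\bc\in\bW$. For a standard Coxeter element $c=s_1s_2$, Proposition~\ref{prop:formula_dual_atoms} writes the atoms as $(\underbrace{\bs_1\bs_2\cdots}_{i+1})(\underbrace{\bs_1\bs_2\cdots}_{i})^{-1}$ for $0\le i\le m-1$; both alternating positive words have length at most $m=\ls(w_0)$, hence are prefixes of $\Delta$ and lie in $\bW$, so each atom already has the form $\bx\by^{-1}$. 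The Coxeter elements $s_1s_2$ and $s_2s_1$ are interchanged by the symmetry $s_1\leftrightarrow s_2$; a non-standard Coxeter element is standard for another simple system $\{r_a,r_b\}$, and since the entire list of $m+2$ simple elements is so short, I would simply identify the corresponding lifts of $r_a,r_b$ in $B(W)$ and check membership in $[\Delta^{-1},\Delta]$ in the same elementary way.

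For the six remaining types the check is finite: each group has finitely many Coxeter elements and for each the set $\DIV(c)$ is finite, so I would enumerate these divisors, lift each to $B(W)$ as above, and decide membership in $[\Delta^{-1},\Delta]$ by a Garside normal-form computation, namely by verifying that both $\Delta\beta$ and $\beta^{-1}\Delta$ are positive braids. All the conceptual work is already carried out in the earlier sections, so the only genuine obstacle is the size of the verification in type $E_8$, where the number of Coxeter elements and the large number of divisors of each make a by-hand check hopeless and force reliance on a careful, efficient implementation; the accompanying minor subtlety is ensuring that the lifting via Proposition~\ref{prop:formula_dual_atoms} and the enumeration of $\DIV(c)$ are performed correctly for every Coxeter element, standard or not.
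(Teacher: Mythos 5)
Your proposal is correct and follows essentially the same route as the paper: both reduce via Proposition \ref{prop:gperm}, dispose of the exceptional types by a finite machine verification (the paper uses CHEVIE), and in the dihedral case note that $\bD_c=\{e,\bc\}\cup\bT_c$ and exhibit the atoms as $(\underbrace{\bs\bt\bs\cdots}_{k})(\underbrace{\bs\bt\bs\cdots}_{k-1})^{-1}$, $1\leq k\leq m$. The only cosmetic difference is that the paper derives this alternating-word form directly from the Hurwitz action on $(\bs,\bt)$, whereas you invoke Proposition \ref{prop:formula_dual_atoms}, which is proved by exactly that Hurwitz computation.
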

\begin{proof}
For exceptional types we have checked the result using the program CHEVIE
(\cite{chevie}). Let us prove it for dihedral types.
By Proposition \ref{prop:gperm}., it suffices to show that any element in
$\bD_c$ has the form $\bx\by^{-1}$ for $\bx$, $\by\in\bW$. Since $S=\{s,t\}$
there are only two possible choices of Coxeter element. Let $c=st$. Then the
elements of $\bD_c$ are the identity element, $\bc$ and the elements of $\bT_c$. By considering the Hurwitz action on $(\bs,\bt)$ we see that the elements of $\bT_c$ are exactly the $(\underbrace{\bs\bt\bs\cdots}_{k})(\underbrace{\bs\bt\bs\cdots}_{k-1})^{-1}$ with $1\leq k\leq m$, which concludes the proof. 
\end{proof}

\section{Positivity properties}

The whole paper is motivated by a positivity conjecture on the
expansion of the images of simple dual braids in a canonical basis of the
Iwahori-Hecke algebra of the Coxeter group. Using work from the previous sections we
prove these properties here for finite irreducible Coxeter groups of type
other than type $D_n$.

\subsection{Iwahori-Hecke algebra of a Coxeter system} 

\begin{defn}
Let $(W,S)$ be a Coxeter system. For $s,t\in S$, recall that $m_{s,t}$ denotes the order
of $st$ in $W$. The \emp{Iwahori-Hecke algebra} $H(W)=H(W,S)$ of $(W,S)$ is the associative, unital $\mathbb{Z}[v,v^{-1}]$-algebra generated by a copy $\{T_s\mid s\in S\}$ of $S$ with relations
\begin{center}
$\begin{array}{ll}
T_s^2=(v^{-2}-1) T_s+ v^{-2} & \forall s\in S,\\
\underbrace{T_s T_t\cdots}_{m_{s,t}~\text{copies}}=\underbrace{T_t T_s\cdots}_{m_{s,t}~\text{copies}}, & \forall s,t\in S.
\end{array}$
\end{center}
\end{defn}

The algebra $H(W)$ has a standard basis $\{T_w\}_{w\in W}$ where $T_w$ is the image of $\bw\in\bW$ under the unique group morphism $a:B(W)\rightarrow H(W)^{\times}$ such that $\bs\mapsto T_s$ for all $s\in S$. It also has two canonical bases $\{C_w\}_{w\in W}$ and $\{C'_w\}_{w\in W}$ defined in \cite{KL}. The algebra $H(W)$ has a unique semilinear involution $j_{H}$ such that $j_H(T_{s})=-v^2 T_{s}$ for all $s\in S$ and $j_H(v)=v^{-1}$. The two canonical bases are then related by the equalities 
\begin{equation}\label{eq:cc'}
C_w=(-1)^{\ls(w)} j_H(C'_w),~\forall w\in W.
\end{equation}
A reference for this material is \cite[Section 7.9]{Humph}.
\subsection{Positivity properties of simple dual braids}

There are many positivity statements involving the canonical bases of $H(W)$.
In this section we prove positivity results on the expansion of images of
simple dual braids in $H(W)$ when they are expressed in the basis $\{C_w\}$.
This comes as a corollary of the results of the previous sections and of the following:

\begin{thm}[{\cite[Corollary 2.9]{DL}}]\label{thm:dl}
Let $(W,S)$ be a finite Weyl group, then for all $x,y\in W$, one has $$T_x^{-1} T_y\in\sum_{w\in W}\mathbb{N}[v,v^{-1}] C_w.$$
\end{thm}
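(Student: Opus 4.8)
Since the statement is quoted from Dyer--Lehrer \cite{DL}, my plan is to reconstruct their geometric argument on the flag variety; a second route, valid for arbitrary finite Coxeter groups and used by Dyer \cite{D}, categorifies everything by Soergel bimodules. As $(W,S)$ is a finite Weyl group I fix a connected reductive group $G$ with Borel $B$, Weyl group $W$ and flag variety $\mathcal{B}=G/B$, and work in the mixed $B$-constructible derived category of $\ell$-adic sheaves on $\mathcal{B}$, in which $H(W)$ is realised by convolution with $v$ recording cohomological shift together with Tate twist. The first step is the standard dictionary: writing $j_w\colon BwB/B\hookrightarrow\mathcal{B}$, the standard objects $\Delta_w=(j_w)_!\,\overline{\mathbb{Q}}_\ell[\ell(w)]$ realise the $T_w$, the costandard objects $\nabla_w=(j_w)_\ast\,\overline{\mathbb{Q}}_\ell[\ell(w)]$ realise the inverses $T_{w^{-1}}^{-1}$, and the simple perverse sheaves $IC_w$ (intersection cohomology of the Schubert varieties) realise a normalisation of the Kazhdan--Lusztig basis. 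One passes between the two normalisations $\{C_w\}$ and $\{C'_w\}$ by the semilinear involution $j_H$ through \eqref{eq:cc'}; the small computation $T_s^{-1}=v\,C_s+v^2C_e$ (whereas $T_s^{-1}=v\,C'_s-C'_e$) shows that $\{C_w\}$, and not $\{C'_w\}$, is the basis in which elements of this shape are positive, which already signals that the mechanism cannot be a naive decomposition of a single sheaf into simple perverse sheaves.

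With the dictionary in place the reduction is formal: convolution is multiplication, so
$$T_x^{-1}T_y=[\nabla_{x^{-1}}]\,[\Delta_y]=[\nabla_{x^{-1}}\ast\Delta_y],$$
and the problem becomes the study of one convolution of a costandard object with a standard one. I would realise the positivity of its class through a proper pushforward: either along the convolution morphism $G\times^{B}\mathcal{B}\to\mathcal{B}$, or, more robustly for purity, as the pushforward of the constant sheaf from a resolution of a Richardson-type variety cut out by intersecting the Schubert variety attached to $x^{-1}$ with an opposite Schubert variety attached to $y$.

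The decisive point is to identify the $C_w$-coefficient of $[\nabla_{x^{-1}}\ast\Delta_y]$ with a manifestly non-negative quantity. Here one must resist invoking semisimplicity of the object itself, which fails: $\nabla_{x^{-1}}\ast\Delta_y$ is genuinely mixed --- already $\nabla_s=\nabla_s\ast\Delta_e$ is a non-split costandard --- so the Euler-characteristic extraction producing the $\{C'_w\}$-coefficients carries signs. Instead the plan is to exploit purity of the \emph{proper pushforward}: with a smooth source, Gabber's purity and the Decomposition Theorem of Beilinson--Bernstein--Deligne--Gabber make the stalk (respectively costalk) cohomology along each Bruhat stratum pure, so its Poincaré polynomial in $v$ has non-negative coefficients. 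Matching this stalk computation to the base-change between $\{T_w\}$ and the Kazhdan--Lusztig bases --- where the choice of stalk versus costalk, i.e.\ $!$- versus $\ast$-restriction, is exactly what selects $\{C_w\}$ over $\{C'_w\}$ --- then presents every coefficient as such a Poincaré polynomial, hence as an element of $\mathbb{N}[v,v^{-1}]$.

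The main obstacle is precisely this matching: finding a proper model with smooth (hence pure) source that computes the right restrictions, and then pinning down the signs and Tate twists so that the non-negative Betti numbers land in $\mathbb{N}[v,v^{-1}]$ in the basis $\{C_w\}$ --- the half-integer twists must cancel and the shift-signs must be absorbed, which is exactly what forces the $\{C_w\}$ normalisation of \eqref{eq:cc'} rather than $\{C'_w\}$. The Soergel-bimodule alternative replaces this by computing graded multiplicities of indecomposable bimodules in a tensor product of standard and costandard bimodules; for general finite Coxeter groups its positivity input is the Hodge theory of Soergel bimodules (the Soergel conjecture), which for Weyl groups reduces to the geometric Decomposition Theorem used above.
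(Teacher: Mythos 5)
First, a point of comparison: the paper does not prove Theorem \ref{thm:dl} at all --- it is quoted verbatim from \cite[Corollary 2.9]{DL}, and the only proof-related content in the paper is the remark that follows it, which records the structure of Dyer--Lehrer's argument: an algebraic equivalence, valid for any finite Coxeter group and fixed $x,y$, between positivity of $T_x^{-1}T_y$ in the basis $\{C_w\}$ and positivity of the expansion of $C'_{x^{-1}}T_y$ in the standard basis $\{T_w\}$ (\cite[Proposition 2.7]{DL}), followed by a geometric proof of the latter statement for Weyl groups (\cite[Theorem 2.8]{DL}); Dyer's later work \cite{D} together with \cite{EW} extends this to all finite Coxeter groups. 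So your reconstruction has to be measured against Dyer--Lehrer's own proof, not against anything carried out in this paper.

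Against that yardstick there is a genuine gap, and it sits exactly where you flag it. Your dictionary ($[\Delta_w]=T_w$, $[\nabla_{x^{-1}}]=T_x^{-1}$, $[\mathrm{IC}_w]$ a normalisation of the Kazhdan--Lusztig basis), your normalisation check $T_s^{-1}=vC_s+v^2C_e$ versus $T_s^{-1}=vC'_s-C'_e$, and your observation that the object $\nabla_{x^{-1}}\ast\Delta_y$ is not semisimple, so that some purity statement must carry the argument, are all correct. But the proposal then stops: ``finding a proper model with smooth (hence pure) source that computes the right restrictions'' and ``pinning down the signs and Tate twists'' is not an obstacle to be noted --- it \emph{is} the theorem, and no candidate model or matching is produced. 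Moreover, the route you chose makes this step needlessly hard: you attack $\nabla_{x^{-1}}\ast\Delta_y$ head-on and try to read off $C_w$-coefficients, i.e.\ you convolve two non-pure, non-semisimple objects. Dyer--Lehrer never do this. Their Proposition 2.7 --- precisely the algebraic duality step missing from your plan, and the one the paper's remark records --- trades the problem for the expansion of $C'_{x^{-1}}T_y$ in the standard basis. After the trade, one factor of the relevant convolution is the simple perverse sheaf $\mathrm{IC}_{x^{-1}}$, which is pointwise pure by Kazhdan--Lusztig's purity theorem for Schubert varieties; the $T_w$-coefficients of $[\mathrm{IC}_{x^{-1}}\ast\Delta_y]$ are Euler characteristics of stalks along Bruhat cells, and purity forces these to be, up to a predictable sign and power of $v$, Poincar\'e polynomials, whence the positivity. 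Without first performing this reduction, the ``matching'' you need in the $\{C_w\}$ basis has no evident geometric carrier, which is exactly why you could not close the argument. To repair the proposal: prove or cite the equivalence of \cite[Proposition 2.7]{DL}, then run your purity argument on $\mathrm{IC}_{x^{-1}}\ast\Delta_y$ instead of $\nabla_{x^{-1}}\ast\Delta_y$.
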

\begin{rmq}
Dyer and Lehrer showed that in case $(W,S)$ is a finite Coxeter group, the
statement of the above theorem (for fixed $x,y$) is equivalent to the
statement that $C'_{x^{-1}} T_y$ has a positive expansion in the standard
basis; they then show that this last statement holds for Weyl groups
(\cite[Proposition 2.7 and Theorem 2.8]{DL}). Dyer later showed in \cite[Conjecture 7(b)]{D}\footnote{The authors thank Matthew Dyer for pointing out
this fact to the second author.}
(using partially unpublished results) that the Kazhdan-Lusztig positivity
conjecture (now proven in \cite{EW} as a corollary of Soergel conjecture \cite{S}) implies the positivity of the expansion of
$C'_{x^{-1}} T_y$ in the standard basis for all finite Coxeter groups. Hence we can assume that Theorem \ref{thm:dl} holds for all finite Coxeter groups.
\end{rmq}
Theorem \ref{thm:dl} together with Proposition \ref{prop:gperm} yields:
\begin{prop}\label{prop:gpermpositive}
Let $\beta\in B(W)$ be a rational permutation braid. Then $$a(\beta)\in\sum_{w\in W} \mathbb{N}[v,v^{-1}] C_w.$$
\end{prop}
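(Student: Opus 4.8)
The plan is to derive Proposition \ref{prop:gpermpositive} directly from the two ingredients already assembled: the characterization of rational permutation braids from Proposition \ref{prop:gperm} and the positivity statement in Theorem \ref{thm:dl}. First I would invoke Proposition \ref{prop:gperm} to write the rational permutation braid $\beta$ in the form $\beta=\bx^{-1}\by$ with $\bx,\by\in\bW$. The point of choosing this particular normal form (item $(2)$ rather than item $(3)$) is that the algebra morphism $a:B(W)\rightarrow H(W)^{\times}$ sends $\bx$ to the standard basis element $T_{p(\bx)}$ and $\by$ to $T_{p(\by)}$, by the very definition of the standard basis recalled after the Iwahori-Hecke algebra is introduced.

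Next I would compute $a(\beta)$ explicitly. Since $a$ is a group morphism, one has $a(\beta)=a(\bx^{-1}\by)=a(\bx)^{-1}a(\by)=T_{x}^{-1}T_{y}$, where I write $x=p(\bx)$ and $y=p(\by)$ for the images in $W$. At this stage the statement to be proved reads
\[
a(\beta)=T_x^{-1}T_y\in\sum_{w\in W}\mathbb{N}[v,v^{-1}]\,C_w,
\]
which is precisely the conclusion of Theorem \ref{thm:dl} applied to the pair $(x,y)\in W\times W$. Thus the result follows immediately once the normal form has been substituted.

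Since I am appealing to Theorem \ref{thm:dl}, I should make sure its hypotheses are met. As stated, the theorem is for finite Weyl groups, but the remark following it explains (via \cite{D} and the now-established Kazhdan-Lusztig positivity conjecture from \cite{EW}) that the conclusion holds for all finite Coxeter groups, so there is no restriction on the type of $(W,S)$ here. The only other point to verify is that $T_w$ is invertible in $H(W)$, which is guaranteed because $a$ lands in the group of invertible elements $H(W)^{\times}$; this is what makes the expression $T_x^{-1}$ well-defined.

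The argument is essentially a one-line combination of two quoted results, so there is no substantial obstacle: the work is entirely in the correct bookkeeping, namely recognizing that $a$ is multiplicative and that it identifies the positive lifts $\bx,\by\in\bW$ with the standard generators $T_x,T_y$. If anything deserves care, it is simply confirming that the normal form $\beta=\bx^{-1}\by$ (and not merely $\beta=\bx\by^{-1}$) is the one that matches the shape $T_x^{-1}T_y$ appearing in Theorem \ref{thm:dl}; Proposition \ref{prop:gperm} provides exactly this form, so no further manipulation is needed.
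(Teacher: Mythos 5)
Your proof is correct and is exactly the paper's argument: the paper also combines Proposition \ref{prop:gperm} (writing $\beta=\bx^{-1}\by$), the identity $a(\bw)=T_w$ for $\bw\in\bW$, and Theorem \ref{thm:dl} (extended to all finite Coxeter groups via the remark following it). Your write-up merely spells out the bookkeeping that the paper compresses into one line.
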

\begin{proof}
This is an immediate consequence of Theorem \ref{thm:dl}, Proposition \ref{prop:gperm} and the fact that $a(\bw)=T_w$ for any $\bw\in \bW$.
\end{proof}

\begin{thm}\label{thm:positivity}
Let $(W,S)$ be a finite irreducible Coxeter system of type other than $D_n$ and
let $c$ be any Coxeter element in $W$; then for any $\bu\in\bD_c$, one has that $$a(\bu)\in\sum_{w\in W} \mathbb{N}[v,v^{-1}] C_w.$$
\end{thm}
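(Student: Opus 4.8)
The plan is to assemble this statement from the structural results of Sections \ref{sec:a}, \ref{sec:b} and \ref{sec:exceptional} together with the positivity input of Proposition \ref{prop:gpermpositive}, so that no new computation is required. First I would appeal to the classification of finite irreducible Coxeter systems: up to isomorphism, the types different from $D_n$ are exactly $A_n$, $B_n$, the dihedral types $I_2(m)$ with $m\geq 3$ (which include $G_2$), and the exceptional types $H_3$, $H_4$, $E_6$, $E_7$, $E_8$ and $F_4$. It therefore suffices to treat each of these families separately and to check that together they exhaust the hypothesis.

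The key reduction is that, in every one of these types and for an arbitrary choice of Coxeter element $c$, each simple dual braid is a rational permutation braid. In type $A_n$ this is exactly Theorem \ref{simplesmikadoa}, in type $B_n$ it is Theorem \ref{simplesmikadob}, and for the dihedral and exceptional types it is Theorem \ref{simplesmikadoexc}. Concatenating these, any $\bu\in\bD_c\subset B_c^*\subset B(W)$ lies in the interval $[\Delta^{-1},\Delta]$ for the order $\ldiv$, that is, is a rational permutation braid, and this in the full generality of an arbitrary Coxeter element $c$ (not merely a standard or bipartite one, the generality being already built into the cited theorems).

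It then remains to apply Proposition \ref{prop:gpermpositive} to the braid $\bu$: this proposition asserts precisely that $a(\beta)\in\sum_{w\in W}\mathbb{N}[v,v^{-1}]C_w$ for any rational permutation braid $\beta$, and taking $\beta=\bu$ yields the desired expansion of $a(\bu)$ in the basis $\{C_w\}$. This concludes the argument.

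As for where the real difficulty lies, it is entirely front-loaded into the cited results. The genuine content is twofold: first, the proof that simple dual braids are rational permutation braids, which is the topological Mikado strand-removal argument in types $A_n$ and $B_n$ and the (partly computer-assisted) case analysis in the remaining types; and second, the deep positivity statement of Theorem \ref{thm:dl} that underlies Proposition \ref{prop:gpermpositive}, itself a consequence of the now-established Kazhdan--Lusztig positivity. The synthesis above is a routine assembly once these are in hand; the one point deserving genuine attention is to verify that every finite irreducible type other than $D_n$ is indeed covered by one of the three theorems, which the classification listed above guarantees.
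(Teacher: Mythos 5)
Your proposal is correct and follows exactly the paper's own proof, which likewise deduces the theorem by combining Theorems \ref{simplesmikadoa}, \ref{simplesmikadob} and \ref{simplesmikadoexc} (covering, via the classification, every finite irreducible type other than $D_n$) with Proposition \ref{prop:gpermpositive}. Your additional remarks on where the real content lies (the Mikado/computer-assisted reductions and the Dyer--Lehrer positivity behind Proposition \ref{prop:gpermpositive}) accurately reflect the structure of the paper's argument.
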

\begin{proof}
This follows from Theorems \ref{simplesmikadoa}, \ref{simplesmikadob},
\ref{simplesmikadoexc} and Proposition \ref{prop:gpermpositive}.
\end{proof}
Notice that the proof of this theorem required to prove that simple dual
braids can be written in the form $\bx^{-1} \by$, which was done in types $A_n$
and $B_n$ by using the geometry of Artin braids.

We are not able to prove this property for type $D_n$, but both computations and the fact that it holds in any other spherical type leads us to conjecture that it again holds:
\begin{conjecture}\label{conj:d}
Let $(W,S)$ be a Coxeter system of type $D_n$.
Let $c$ be any Coxeter element in $W$; then
any element of
$\bD_c\subset B_c^*\subset B(W)$ is a rational permutation braid.
\end{conjecture}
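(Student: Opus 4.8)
The plan is to follow the template that succeeded in types $A_n$ and $B_n$ (Theorems \ref{simplesmikadoa} and \ref{simplesmikadob}): first realize the braid group $B(W)$ of type $D_n$, together with the embedding of $B_c^*$ inside it, by a concrete topological model; then characterize rational permutation braids by a Mikado-type strand-removal condition, as in Propositions \ref{prop:caract} and \ref{prop:caractb}; and finally read off from the noncrossing-partition picture of an element $\bu\in\bD_c$ that its strands can be peeled off one block at a time, each removed strand lying above all the others. Combined with Proposition \ref{prop:gperm}, this would yield $\bu=\bx^{-1}\by$ with $\bx,\by\in\bW$, which is exactly the rational permutation property.

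Concretely, I would start from the realization of $W$ of type $D_n$ as the group of \emph{even} signed permutations of $\{-n,\dots,-1,1,\dots,n\}$, a reflection subgroup of the type $B_n$ group whose reflections are the paired transpositions $(a,b)(-a,-b)$ and $(a,-b)(-a,b)$ but \emph{not} the sign changes $(a,-a)$. Using the type $D_n$ noncrossing partition model (the type $D$ part of \cite{Reiner}), the set $\DIV(c)$ for a standard Coxeter element $c$ is encoded by antipodally symmetric noncrossing partitions of $2n$ points on a circle, each block drawn as a polygon exactly as in Sections \ref{sec:coxeterarbitr} and \ref{sec:grapha}. The reflections $(a,b)(-a,-b)$ then lift to symmetric pairs of elementary braids, so a simple dual braid is again obtained by concatenating the braids attached to the edges of its polygons; one would then attempt to remove a rightmost polygon $P$ (together with its mirror $\bar P$, or by itself when $P=\bar P$) by the same inductive argument as in the proof of Theorem \ref{simplesmikadob}.

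Two difficulties arise, the second being the genuine obstacle. First, type $D_n$ admits no diagram folding analogous to the map $A_{2n-1}\twoheadrightarrow B_n$ lifting cleanly to the braid groups (the folding available is $D_n\to B_{n-1}$, which goes the wrong way), so the slick argument of Proposition \ref{prop:caractb} that reduces the statement to the already-proven type $A$ result by passing to $\tau$-fixed points is unavailable, and one must argue intrinsically inside the type $D$ model. The main obstacle, however, is the behaviour of the \emph{central} block of a type $D_n$ noncrossing partition: unlike in types $A_n$ and $B_n$, the type $D$ lattice contains blocks that straddle the central axis antipodally with no outer polygon separating their two halves, and for such a block the clean dichotomy used before---some rightmost strand is \emph{good}, i.e. lies above all the strands it crosses---can fail, since the two antipodal strands may be forced to interleave near the centre so that neither can be isotoped above all the others simultaneously. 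Overcoming this would require either a refined notion of good strand adapted to the central block, or a separate algebraic treatment of that single block, for instance via a compatibility of divisibility between the type $D$ and type $B$ structures in the spirit of \cite[Lemma 4.8]{kapi}, after which Theorem \ref{thm:caractb} could be invoked for the remaining strands. Making this central-block analysis rigorous, uniformly over all standard Coxeter elements $c$, is precisely the step we have not been able to complete, which is why the statement is left as a conjecture.
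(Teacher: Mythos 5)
There is no proof in the paper to compare against: the statement is explicitly left as a conjecture, with the authors stating they are unable to prove it for type $D_n$. Your proposal, correctly, does not claim a proof either, and your diagnosis of the obstruction is consistent with the paper's own discussion --- the authors likewise note (in the Problem following the conjecture) that the known Artin-like braid model for type $D_n$ from \cite{allcock} does not produce the expected analogues of Mikado braids, which matches your first difficulty, and your point about antipodally symmetric central blocks in the type $D$ noncrossing partition lattice identifies concretely why the strand-removal induction of Theorems \ref{simplesmikadoa} and \ref{simplesmikadob} breaks down. In short, your attempt takes the same route the paper envisions and stops at the same genuine open gap; the only evidence the paper offers for the conjecture is computation and the analogy with the other spherical types.
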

Note that if this conjecture is true we will have the same positivity property for type
$D_n$ as for all other spherical types.

\begin{problem}
Does there exist a model for the braid group of type $D_n$ by Artin-like
braids which could be used to prove that simple elements of dual braid
monoids are rational permutation braids? A model for type $D_n$ can be found in \cite{allcock}, but the analogues of the Mikado braids in this model are not the expected ones. 
\end{problem}

\begin{problem}
Is there a uniform approach to show that simple elements of dual braid
monoids are rational permutation braids?
\end{problem}

\subsection{Temperley-Lieb algebra and monomial basis}\label{sec:tl}
Since elements of $\bW$, that is, images of the simple elements for the classical Garside structure provide a basis of the Iwahori-Hecke algebra, one might investigate the linear independence of images of elements of $\bD_c$ in $H(W)$. The set $\bD_c$ is too small to give a basis of $H(W)$: indeed, the algebra $H(W)$ has rank $|W|$ while $\DIV(c)$ is in general a strict subset of $W$ counted by the generalized Catalan number of type $W$ (see for instance~\cite{Armst}). Nevertheless, in type $A_n$, the set $\bD_c$ gives a basis of a remarkable quotient of $H(W)$, the \emp{Temperley-Lieb algebra}. From now on, $(W,S)$ is a Coxeter system of type $A_n$.
\begin{defn}
The \emp{Temperley-Lieb algebra}
$\mathrm{TL}_n$ is the associative, unital $\mathbb{Z}[v, v^{-1}]$-algebra
obtained as quotient of $H(W)$ by the two-sided ideal generated by the
elements
$$\sum_{w\in\left\langle s_i,s_{i+1}
\right\rangle} T_w,$$ for $i=1,\ldots,n-1$, where
 $s_i=(i,i+1)$. We write
$\theta:H(W)\twoheadrightarrow \mathrm{TL}_n$ for the quotient map.
Alternatively, one gets an isomorphic algebra by taking the quotient
by the two-sided ideal generated by the
elements $$\sum_{w\in\left\langle s_i,s_{i+1}
\right\rangle}(-1)^{\ell_{\mathcal{S}}(w)} v^{2\ell_{\mathcal{S}}(w)} T_w,$$
for $i=1,\dots, n-1$
(see \eg, \cite[Section 2.3 and Remark 2.4]{Ram}). We write
$\theta':H(W)\twoheadrightarrow \tl$ for this alternative quotient map. 
\end{defn}

The algebra $\tl$ has a presentation by generators $b_{s_1},\dots, b_{s_n}$ and relations
\begin{center}
$\begin{array}{rcll}
b_{s_i} b_{s_{i\pm 1}} b_{s_i} &=&b_{s_i} & \forall i, i\pm 1\in\{1,\dots, n\},\\
b_{s_i}b_{s_j}&=&b_{s_j}b_{s_i} & \forall i,j\in\{1,\dots, n\}\text{ with }|i-j|>1,\\
b_{s_i}^2&=&(v+v^{-1}) b_{s_i} & \forall i\in\{1,\dots, n\}.
 \end{array}$

\end{center}
We have (see \cite[Section 2.3 and Remark 2.4]{Ram})
\begin{eqnarray}\label{quotients}
\theta(T_{s_i})=v^{-1}b_{s_i}-1,~\theta'(T_{s_i})=v^{-2}-v^{-1} b_{s_i}.
\end{eqnarray}
\begin{defn}
An element $w\in W$ is \emp{fully commutative} if one can pass from any
reduced $S$-decomposition of $w$ to any other one by applying only relations of the form $s_i s_j=s_j s_i$ for $|i-j|>1$. We denote by $W_f$ the set of fully commutative elements. For details on fully commutative elements we refer to \cite{Stem}.
\end{defn}
The algebra $\tl$ has a basis indexed by the elements of $W_f$. It
is built as follows; given any $w\in W_f$ with reduced expression
$st\cdots u$, the above presentation shows that the product $b_s b_t\cdots b_u$ does not
depend on the choice of the reduced expression and therefore we can denote it by
$b_w$. The set $\{b_w\}_{w\in W_f}$ turns out to yield a basis of $\tl$ (see \cite[Corollary 5.32]{KaTu}).

\subsection{Projection of the canonical basis}
The basis $\{b_w\}_{w\in W_f}$ turns out to be related to the canonical basis $\{C'_w\}_{w\in W}$ of $H(W)$ as follows:
\begin{thm}[\cite{FG}, Section $3$]\label{thm:fg}
The basis $\{b_w\}_{w\in W_f}$ is the projection of $\{C'_w\}_{w\in W}$ by $\theta$. In symbols, for $w\in W_f$, one has $\theta(C'_w)=b_w$ while $\theta(C'_w)=0$ for $w\notin W_f$.
\end{thm}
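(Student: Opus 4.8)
The plan is to realise $\theta$ as the quotient of $H(W)$ by the two-sided ideal spanned by the non-fully-commutative elements of the Kazhdan--Lusztig basis, and then to match the surviving basis with the monomials $\{b_w\}$. First I would fix the normalisation forced by (\ref{quotients}): since $\theta(T_{s_i})=v^{-1}b_{s_i}-1$, we have $b_{s_i}=\theta\bigl(v(T_{s_i}+1)\bigr)$, and with the standard convention $C'_{s_i}=v(T_{s_i}+1)$ this reads $\theta(C'_{s_i})=b_{s_i}$. Next I would identify $\ker\theta$. The longest element of the rank-two parabolic subgroup $\left\langle s_i,s_{i+1}\right\rangle$ is $s_is_{i+1}s_i$, and all of its Kazhdan--Lusztig polynomials equal $1$, so that $C'_{s_is_{i+1}s_i}=v^{3}\sum_{w\in\left\langle s_i,s_{i+1}\right\rangle}T_w$. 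Hence $\ker\theta$ is exactly the two-sided ideal $J$ generated by the elements $C'_{s_is_{i+1}s_i}$ for $i=1,\dots,n-1$.

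The heart of the argument is to prove that
$$J=\bigoplus_{w\notin W_f}\mathbb{Z}[v,v^{-1}]\,C'_w.$$
I would establish this in two steps. On the one hand, each generator $C'_{s_is_{i+1}s_i}$ is indexed by a non-fully-commutative element, and if $w\notin W_f$ then, by Stembridge's characterisation, some reduced $S$-expression of $w$ contains a factor $s_is_{i+1}s_i$; writing $w$ accordingly and using the positivity of the structure constants (the expansion of a product $C'_xC'_y$ in the basis $\{C'_z\}$ has coefficients in $\mathbb{N}[v,v^{-1}]$, classical in type $A$) one gets $C'_w\in J$. On the other hand, one must show that the span of the $C'_w$ with $w\notin W_f$ is itself a two-sided ideal, i.e. that in the multiplication rule $C'_sC'_w=C'_{sw}+\sum_{sz<z}\mu(z,w)C'_z$ (valid when $sw>w$, with $C'_sC'_w=(v+v^{-1})C'_w$ when $sw<w$) no fully commutative $z$ survives once $w\notin W_f$. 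The leading term $C'_{sw}$ remains non-fully-commutative, since prepending a letter to a reduced word preserves the offending factor; the delicate point is to exclude fully commutative elements among the $\mu$-corrections. This is the main obstacle: it rests on the positivity of Kazhdan--Lusztig polynomials together with the combinatorics of fully commutative elements, and it is precisely the structural content extracted in \cite{FG}.

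Granting this description of $J$, the quotient $H(W)/J\cong\tl$ acquires $\{\theta(C'_w)\}_{w\in W_f}$ as a basis, so it remains to prove $\theta(C'_w)=b_w$ for $w\in W_f$. I would conclude by a uniqueness argument. Both families are fixed by the bar involution of $\tl$: the element $C'_w$ is bar-invariant, each $b_{s_i}=\theta(C'_{s_i})$ is bar-invariant, and since the bar involution is a ring homomorphism commuting with $\theta$, every monomial $b_w=\theta(C'_{s_{i_1}})\cdots\theta(C'_{s_{i_k}})$ is bar-invariant as well. Both are moreover unitriangular with the same leading term $v^{\ls(w)}\theta(T_w)$ with respect to the basis $\{\theta(T_y)\}_{y\in W_f}$ of $\tl$ (itself obtained from $\{\theta(C'_y)\}_{y\in W_f}$ by a unitriangular change), since $C'_w=v^{\ls(w)}T_w+\sum_{y<w}(\ast)T_y$ and, by positivity, the corresponding lower coefficients all lie in $v\mathbb{Z}[v]$. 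By the standard uniqueness of a bar-invariant element with prescribed triangular leading data and this one-sided integrality condition, the two bases coincide, giving $\theta(C'_w)=b_w$ for $w\in W_f$, while $\theta(C'_w)=0$ for $w\notin W_f$ because these basis elements lie in $J=\ker\theta$.
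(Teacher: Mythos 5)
First, note that the paper offers no proof of this statement: it is quoted from Fan--Green \cite{FG}, so your attempt has to stand on its own as a proof of that result. Your skeleton and normalisations are correct: $\theta(C'_{s_i})=b_{s_i}$, and $C'_{s_is_{i+1}s_i}=v^{3}\sum_{w\in\langle s_i,s_{i+1}\rangle}T_w$, so $\ker\theta$ is indeed the two-sided ideal generated by these canonical basis elements. But there is a genuine gap exactly at the point you flag and then set aside: the claim that no fully commutative $z$ occurs among the $\mu$-corrections in $C'_sC'_w$ when $w\notin W_f$ (equivalently, that $\mathrm{span}\{C'_u : u\notin W_f\}$ is a two-sided ideal). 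Deferring it to \cite{FG} is circular, since Theorem \ref{thm:fg} \emph{is} the \cite{FG} result. Moreover, your first step conceals the same hole: to get $C'_w\in J$ from a reduced factorisation $w=x\,(s_is_{i+1}s_i)\,y$, you need the lower-order terms $C'_z$ of $C'_xC'_{s_is_{i+1}s_i}C'_y$ to lie in $J$, which again requires knowing that those $z$ are not fully commutative. Positivity of the structure constants cannot supply this: positivity only rules out cancellation, it says nothing about \emph{which} $z$ appear. So both halves of your ``heart'' reduce to the one statement you do not prove.

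The missing ingredient in type $A_n$ is not Kazhdan--Lusztig positivity but cell theory via RSK: $w$ is fully commutative iff it is $321$-avoiding iff its RSK shape has at most two rows; two-sided cells are the fibres of the RSK shape and the two-sided cell preorder corresponds to dominance order on shapes; and the shapes with at least three rows form a downward-closed set in dominance (if $\lambda\unlhd\mu$ and $\mu_1+\mu_2<n+1$, then $\lambda_1+\lambda_2<n+1$). Hence $\mathrm{span}\{C'_u: u\notin W_f\}$ is a two-sided ideal containing the generators $C'_{s_is_{i+1}s_i}$ (of shape $(n-1,1,1)$), and since the quotient of $H(W)$ by either ideal is free of rank $|W_f|$ (for $\ker\theta$ this is the monomial basis, \cite[Corollary 5.32]{KaTu}), the inclusion of ideals forces equality. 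Finally, your uniqueness step also rests on an unjustified assertion: ``by positivity, the lower coefficients lie in $v\mathbb{Z}[v]$''. Before reduction this follows from the degree bound on Kazhdan--Lusztig polynomials (not from positivity); but after rewriting $\theta(T_y)$ for non-fully-commutative $y$ in terms of the basis $\{\theta(T_z)\}_{z\in W_f}$, negative powers of $v$ enter (the rewriting passes through non-reduced products of the $T$'s, whose expansions involve $v^{-2}-1$ and $v^{-2}$), so the required integrality of both families must be established by a separate induction --- that verification, for the monomial basis, is precisely the technical content of \cite{FG}.
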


\subsection{Zinno basis}
The Temperley-Lieb algebra comes equipped with a semilinear involution $j_{\tl}$ such that $j_{\tl}(v)=v^{-1}$ and $j_{\tl}(b_s)=b_s$ for any $s\in S$. To realize the algebra $\tl$ as a quotient of $\mathbb{Z}[v, v^{-1}][\mathcal{B}_n]$, we use the composition of $a':B(W)\rightarrow H(W)$, $\bs\mapsto v T_s$ with $\theta'$, that is, by~\eqref{quotients} any generator $\bs_i\in\mathcal{B}_n$ is mapped to $v^{-1}-b_{s_i}$. We set $\omega:=\theta'\circ a'$. 

\begin{thm}[\cite{Z}, Theorem $2$, \cite{LL}, Theorem $1$]
Let $c=s_n s_{n-1}\cdots s_1$; then $\omega(\bD_c)$ yields
a basis of $\tl$.
\end{thm}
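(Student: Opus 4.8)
The plan is to show that the $C_{n+1}\times C_{n+1}$ matrix expressing $\omega(\bD_c)$ in the diagram basis $\{b_w\}_{w\in W_f}$ is invertible over $\BZ[v,v^{-1}]$; a dimension count reduces the statement to this. Indeed, by the description of Section~\ref{sec:coxeterarbitr} the set $\DIV(c)$ is in bijection with the noncrossing partitions of $n+1$ points, so $|\bD_c|=|\DIV(c)|$ is the Catalan number $C_{n+1}$, which is also the number $|W_f|$ of $321$-avoiding permutations of $\mathfrak{S}_{n+1}=W$, that is, $\dim_{\BZ[v,v^{-1}]}\tl$. Hence $\omega(\bD_c)$ is a basis as soon as it spans, equivalently as soon as its transition matrix to $\{b_w\}$ is invertible.

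First I would make the coefficients of this matrix explicit. For $\bu\in\bD_c$, Theorem~\ref{simplesmikadoa} lets me write $\bu=\bx^{-1}\by$ with $\bx,\by\in\bW$; since $a'(\bs)=vT_s$ one has $a'(\bw)=v^{\ls(w)}T_w$ for $\bw\in\bW$, so $a'(\bu)=v^{\ls(y)-\ls(x)}T_x^{-1}T_y$ and therefore $\omega(\bu)=v^{\ls(y)-\ls(x)}\theta'(T_x^{-1}T_y)$. Next I would record how $\theta'$ acts on $\{C_w\}$. Comparing \eqref{quotients} on generators shows that $\theta'=j_{\tl}\circ\theta\circ j_H$, both sides being linear algebra maps $H(W)\to\tl$ agreeing on each $T_{s_i}$. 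Using $j_H^2=\mathrm{id}$, equation \eqref{eq:cc'}, the identity $j_{\tl}(b_w)=b_w$, and Theorem~\ref{thm:fg}, this yields
\[
\theta'(C_w)=(-1)^{\ls(w)}b_w\ \ (w\in W_f),\qquad \theta'(C_w)=0\ \ (w\notin W_f).
\]
Combined with Theorem~\ref{thm:dl}, which gives $T_x^{-1}T_y=\sum_w p_w(v)C_w$ with $p_w\in\BN[v,v^{-1}]$, I obtain the explicit expansion
\[
\omega(\bu)=v^{\ls(y)-\ls(x)}\sum_{w\in W_f}(-1)^{\ls(w)}p_w(v)\,b_w .
\]
In particular the coefficients lie in $\pm\BN[v,v^{-1}]$, a refinement of Proposition~\ref{prop:gpermpositive} compatible with the expected positivity in the diagram basis.

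The remaining and main task is to prove that this matrix is invertible, for which I would establish unitriangularity after a suitable reordering. Resolving each crossing of $\bu$ via $\omega(\bs_i^{\pm1})=v^{\mp1}-b_{s_i}$ expands $\omega(\bu)$ as a state sum $\sum_{A}(-1)^{|A|}v^{e_A}(v+v^{-1})^{d_A}\,b_{w_A}$ over subsets $A$ of the crossings of a reduced diagram, where for each $A$ the ordered subproduct of generators reduces in $\tl$ to a loop factor $(v+v^{-1})^{d_A}$ times a single $b_{w_A}$ with $w_A\in W_f$. The Mikado structure of $\bu$ (a layered diagram in which any two strands cross at most once) should single out a distinguished planar matching $\phi(u)\in W_f$, namely the one of maximal length appearing in this resolution, and the plan is to show that (i) $u\mapsto\phi(u)$ is injective, hence bijective by the Catalan count, and (ii) ordering $W_f$ by $\ls$ (refined arbitrarily to a total order), the diagonal term $b_{\phi(u)}$ occurs with a unit coefficient $\pm v^{k}$ while every other $w$ occurring is strictly smaller. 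Triangularity with unit diagonal then forces the determinant to be a unit of $\BZ[v,v^{-1}]$, completing the proof.

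The hard part will be establishing (i) and (ii): controlling the state sum well enough to guarantee that the distinguished matching $\phi(u)$ is never cancelled nor pushed below its expected position, and that $\phi$ is a bijection between noncrossing partitions and $321$-avoiding permutations. This is precisely the triangularity of the base change recorded in \cite{Z} and \cite{GobTh}. To carry it out I would use the geometric description of the embedding $B_c^*\hookrightarrow\mathcal{B}_{n+1}$ from Section~\ref{sec:grapha} together with the good-strand (Mikado) structure to pin down $\phi(u)$ and to bound the lower-order matchings appearing in the resolution.
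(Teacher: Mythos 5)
The paper offers no proof of this statement at all---it is quoted from \cite{Z} and \cite{LL}---so your argument has to stand entirely on its own, and it does not. The preparatory part is correct: the Catalan-number dimension count, the identity $\omega(\bu)=v^{\ls(y)-\ls(x)}\theta'(T_x^{-1}T_y)$ obtained from Theorem \ref{simplesmikadoa}, and the computation $\theta'(C_w)=(-1)^{\ls(w)}b_w$ for $w\in W_f$ (zero otherwise), which combined with Theorem \ref{thm:dl} essentially reproduces the paper's own proof of Theorem \ref{thm:positivitetl}. But positivity of the coefficients says nothing about linear independence, and the whole content of the theorem is the invertibility of the transition matrix. That invertibility is exactly what you postpone as items (i) and (ii), and your fallback---``this is precisely the triangularity of the base change recorded in \cite{Z} and \cite{GobTh}''---is circular, since those are the very sources of the statement you were asked to prove. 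What you have is a strategy outline whose hard core is missing.

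Moreover, the plan as stated breaks down at the first nontrivial case, because the fully commutative term ``of maximal length'' in the expansion of $\omega(\bu)$ need not be unique, so your $\phi$ is not well defined and ordering $W_f$ by $\ls$ cannot by itself give triangularity. Take $n=2$, $c=s_2s_1$: the simple dual braid lifting the reflection $s_1s_2s_1$ is $\bs_1^{-1}\bs_2\bs_1$, and a direct computation in $\tl$ gives
\begin{equation*}
\omega(\bs_1^{-1}\bs_2\bs_1)=v^{-1}-b_{s_1}-b_{s_2}+v^{-1}b_{s_1s_2}+v\,b_{s_2s_1},
\end{equation*}
with two terms of maximal length $2$, while $\omega(\bs_2\bs_1)=v^{-2}-v^{-1}b_{s_1}-v^{-1}b_{s_2}+b_{s_2s_1}$. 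A triangular structure does exist here (assign $s_1s_2$ to the reflection and $s_2s_1$ to $c$, and order accordingly), but exhibiting it in general requires constructing an explicit bijection $\DIV(c)\to W_f$ and a compatible total order---in the literature this is a Bruhat-type order on $\DIV(c)$, cf.\ \cite{Gob}, \cite{GobWil}---together with a non-cancellation argument for the chosen diagonal entries of the state sum. None of that is carried out in your proposal, so the proof is genuinely incomplete.
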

The above Theorem is stated differently in \cite{Z} since dual braid monoids
had not been introduced when that paper appeared. Notice that the
conventions used in \cite{Z} correspond to the quotient map $\theta\circ a$,
which sends $\bs_i$ to $v^{-1}b_{s_i}-1$. Since one passes from one to the other by
composing with a semilinear automorphism of $\mathbb{Z}[v,
v^{-1}]$-modules and we are here interested in bases, this does not affect the result.

This result is also generalized to arbitrary standard Coxeter elements in \cite[Corollary 5.2.9]{Vincenti} and \cite[Theorem 3.8.28 and Remark 3.8.29]{GobTh}:

\begin{thm}\label{thm:basetl}
For any standard Coxeter element $c$, the set $\omega(\bD_c)$ yields a basis of $\tl$.
\end{thm}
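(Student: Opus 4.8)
The plan is to reduce the assertion to a statement of linear independence, to compute each $\omega(\bu)$ explicitly in the diagram basis $\{b_w\}_{w\in W_f}$ using the positivity results established above, and then to deduce invertibility of the resulting transition matrix from a triangularity property. I would begin with a dimension count: for $W$ of type $A_n$ the noncrossing partitions $\DIV(c)$, the fully commutative (equivalently $321$-avoiding) elements $W_f$ of $\mathfrak{S}_{n+1}$, and hence $\dim_{\mathbb{Q}(v)}\tl$, all have cardinality the Catalan number $\mathrm{Cat}(n+1)$. Since $|\bD_c|=|\DIV(c)|=\dim\tl$, it suffices to prove that the family $\{\omega(\bu)\}_{\bu\in\bD_c}$ is linearly independent over $\mathbb{Q}(v)$; this simultaneously forces the images to be pairwise distinct and to span.

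First I would obtain a closed formula for $\omega(\bu)$ in the basis $\{b_w\}$. Writing $\bu=\bt_1\cdots\bt_{\lt(u)}$ as a product of dual atoms, each $\bt_i$ is conjugate to a single classical generator by Proposition \ref{prop:formula_dual_atoms} and so has exponent sum $1$; since $a'(\beta)=v^{e(\beta)}a(\beta)$ where $e$ denotes the total exponent, this gives $a'(\bu)=v^{\lt(u)}a(\bu)$, whence $\omega(\bu)=v^{\lt(u)}\theta'(a(\bu))$. By Theorem \ref{simplesmikadoa} the braid $\bu$ is a rational permutation braid, so Proposition \ref{prop:gpermpositive} yields $a(\bu)=\sum_{w\in W}p_{u,w}C_w$ with $p_{u,w}\in\mathbb{N}[v,v^{-1}]$. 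It remains to evaluate $\theta'$ on the Kazhdan--Lusztig basis. Comparing the formulas in \eqref{quotients} on the generators $T_{s_i}$ shows that $\theta'=j_{\tl}\circ\theta\circ j_H$; combined with \eqref{eq:cc'} (so that $j_H(C_w)=(-1)^{\ls(w)}C'_w$), with Theorem \ref{thm:fg}, and with $j_{\tl}(b_w)=b_w$, this gives $\theta'(C_w)=(-1)^{\ls(w)}b_w$ for $w\in W_f$ and $\theta'(C_w)=0$ otherwise. Substituting, I obtain
$$\omega(\bu)=v^{\lt(u)}\sum_{w\in W_f}(-1)^{\ls(w)}\,p_{u,w}(v)\,b_w.$$

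It then remains to show that the square matrix $P=\bigl((-1)^{\ls(w)}v^{\lt(u)}p_{u,w}\bigr)_{u\in\DIV(c),\,w\in W_f}$ is invertible, and this is where the genuine difficulty lies. The approach I would take is to exhibit a bijection $\psi\colon\DIV(c)\to W_f$ and a partial order — the refinement order on the $c$-labelled noncrossing partitions, transported to $W_f$ — for which $P$ is triangular with invertible, monomial diagonal entries. Using the geometric model of Section \ref{sec:grapha}, one reads off from the noncrossing-partition picture of $\bu$ a distinguished fully commutative element $\psi(u)$ recorded by the over-strands, and one must prove that $b_{\psi(u)}$ occurs in $\omega(\bu)$ with a unit coefficient while every other $b_w$ occurring satisfies $w<\psi(u)$. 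For the linear Coxeter element $c=s_n\cdots s_1$ this triangularity is exactly Zinno's theorem and its refinement in \cite{Z} and \cite{GobTh}, and the geometric description of $\bD_c$ for an arbitrary standard Coxeter element developed in the present paper should reduce the general case to the same combinatorics. The main obstacle is precisely this triangularity: showing that $\psi$ is a bijection onto $W_f$ and that the distinguished term is leading demands a careful analysis of the projected $C$-expansion rather than any further algebraic input. Here the positivity secured above is essential, since it guarantees that no cancellation among the $b_w$ can destroy the diagonal contribution, so that $p_{u,\psi(u)}$ survives as a nonzero monomial; once this is in hand, $\det P$ is a nonzero product of monomials, $P$ is invertible over $\mathbb{Q}(v)$, and the theorem follows.
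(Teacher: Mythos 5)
Your preparatory computations are correct: the Catalan dimension count, the identity $a'(\bu)=v^{\lt(u)}a(\bu)$ (valid since every element of $\bD_c$ is a product of $\lt(u)$ conjugates of generators), and the evaluation $\theta'(C_w)=(-1)^{\ls(w)}b_w$ for $w\in W_f$ and $\theta'(C_w)=0$ otherwise, obtained from $\theta=j_{\tl}\circ\theta'\circ j_H$, \eqref{eq:cc'} and Theorem \ref{thm:fg}. This is exactly the computation the paper itself performs afterwards to deduce Theorem \ref{thm:positivitetl}, and your use of it is not circular, since it rests only on Theorem \ref{simplesmikadoa}, Proposition \ref{prop:gpermpositive} and Theorem \ref{thm:fg}. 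The problem is that your proof stops where the theorem begins.

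The invertibility of the transition matrix is the entire content of the statement, and your proposal does not prove it: you posit a bijection $\psi\colon\DIV(c)\to W_f$ and an order making the matrix triangular with unit diagonal, but you neither construct $\psi$ for an arbitrary standard Coxeter element nor prove the leading-term property; the assertion that the general case ``should reduce'' to the linear Coxeter element $c=s_n\cdots s_1$ is precisely the missing argument. Indeed, that extension from Zinno's case to arbitrary $c$ is exactly the cited content of \cite[Theorem 3.8.28]{GobTh} and \cite[Corollary 5.2.9]{Vincenti} --- note that the paper does not prove Theorem \ref{thm:basetl} either, but quotes it from these references, so what you have written amounts to re-deriving their setup and then invoking their conclusion. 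Moreover, positivity cannot substitute for this analysis: knowing $p_{u,w}\in\mathbb{N}[v,v^{-1}]$ guarantees no cancellation \emph{within} a coefficient, but says nothing about whether a given $p_{u,\psi(u)}$ is nonzero, let alone a monomial; a no-cancellation argument only helps after one has exhibited a specific nonzero contribution to the diagonal entry, which is the very combinatorial work you defer. A further, smaller, imprecision: linear independence over $\mathbb{Q}(v)$ only yields a basis after extending scalars, whereas $\tl$ is a $\mathbb{Z}[v,v^{-1}]$-algebra; for a genuine module basis the transition determinant must be a unit $\pm v^k$, which your (unproven) triangularity with monomial diagonal would give, but which your stated reduction to linear independence does not.
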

Hence the images of the simple elements of the Garside monoids $B_c^*$ yield a basis of $\tl$, in much the same way that the images of the simple elements of the Garside monoid $B^+(W)$ yield the standard basis of $H(W)$.

Note that one may have $x\in\DIV(c)\cap \DIV(c')$ for some $c'\neq c$, in which case the corresponding
elements of $\bD_c$ and $\bD_{c'}$, hence also the corresponding basis elements of $\tl$ are not equal in general. For example, in type $A_2$ with $c=s_1 s_2$, $c'=s_2 s_1$, the reflection $s_1 s_2 s_1$ lies in $\DIV(c)\cap \DIV(c')$ (as any reflection). The corresponding element of $\bD_c$, \resp $\bD_{c'}$ is $\bs_1 \bs_2\bs_1^{-1}$, \resp $\bs_1^{-1} \bs_2 \bs_1$. The two elements are not equal.  

\subsection{Positivity consequences}
The two quotient maps from Section \ref{sec:tl} are related by $\theta=j_{\tl}\circ\theta'\circ j_H$. Using this fact together with Theorems \ref{thm:positivity}, \ref{thm:fg} and Equality \eqref{eq:cc'} we get:
\begin{thm}\label{thm:positivitetl}
Let $c$ be any standard Coxeter element, let $\{Z_x\}_{x\in\DIV(c)}$ be the
corresponding basis of $\tl$, \ie, $Z_x:=\omega(\bx)$ for $x\in\DIV(c)$. Then for any $x\in \DIV(c)$, one has $$Z_x\in\sum_{w\in W_f} \mathbb{N}[v,v^{-1}] (-1)^{\ls(w)} b_w.$$
\end{thm}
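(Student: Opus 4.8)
The plan is to reduce the statement to the positivity already established for the lift $a$ and the Kazhdan--Lusztig basis $\{C_w\}$ (Theorem \ref{thm:positivity}, available here since $W$ is of type $A_n$, hence finite irreducible and not of type $D_n$), transporting it through the two quotient maps $\theta,\theta'$ and the projection formula of Theorem \ref{thm:fg}. First I would record the relation between the two lifts $a$ and $a'$. Since $a(\bs)=T_s$ while $a'(\bs)=vT_s$, since $v$ is central in $H(W)$, and since the exponent sum $d\colon B(W)\to\mathbb{Z}$ (the abelianization degree, well defined because every braid relation has equal length on its two sides) is a group homomorphism, the assignment $\beta\mapsto v^{d(\beta)}a(\beta)$ is a homomorphism $B(W)\to H(W)^{\times}$ agreeing with $a'$ on the generators. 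Hence $a'(\beta)=v^{d(\beta)}a(\beta)$ for all $\beta$, and in particular $Z_x=\omega(\bx)=\theta'(a'(\bx))=v^{d(\bx)}\,\theta'\bigl(a(\bx)\bigr)$. The central monomial $v^{d(\bx)}$ merely shifts degrees, so it suffices to expand $\theta'(a(\bx))$ with coefficients in $\mathbb{N}[v,v^{-1}]$ against the basis $\{(-1)^{\ls(w)}b_w\}$.

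Next I would convert $\theta'$ into $\theta$, so as to use the results stated for $\theta$. From the relation $\theta=j_{\tl}\circ\theta'\circ j_H$ and the fact that $j_H$ and $j_{\tl}$ are involutions, one gets $\theta'=j_{\tl}\circ\theta\circ j_H$. Writing $a(\bx)=\sum_{w\in W}P_w\,C_w$ with $P_w\in\mathbb{N}[v,v^{-1}]$ by Theorem \ref{thm:positivity}, I would then compute in three steps. Applying $j_H$ and using $j_H(C_w)=(-1)^{\ls(w)}C'_w$ (obtained from \eqref{eq:cc'} by applying $j_H$ and $j_H^2=\mathrm{id}$) gives $j_H(a(\bx))=\sum_{w}P_w(v^{-1})(-1)^{\ls(w)}C'_w$; applying $\theta$ and invoking Theorem \ref{thm:fg} kills the terms with $w\notin W_f$ and replaces $C'_w$ by $b_w$ for $w\in W_f$; finally applying $j_{\tl}$, which fixes every $b_w$ and sends $v\mapsto v^{-1}$, turns each coefficient $P_w(v^{-1})$ back into $P_w(v)$. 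The outcome is $\theta'(a(\bx))=\sum_{w\in W_f}P_w\,(-1)^{\ls(w)}b_w$.

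Combining the two steps yields
\[ Z_x=\sum_{w\in W_f}v^{d(\bx)}P_w\,(-1)^{\ls(w)}b_w, \]
and since each $v^{d(\bx)}P_w\in\mathbb{N}[v,v^{-1}]$ this is exactly the asserted expansion. There is no genuinely hard input here beyond the cited results; the one point demanding care is the bookkeeping of the two semilinear involutions together with the two quotient maps, so that the positivity available only for $a$ expanded in $\{C_w\}$ is correctly routed to $\theta'$, while the signs $(-1)^{\ls(w)}$ emerging from \eqref{eq:cc'} match precisely the signs of the target basis $\{(-1)^{\ls(w)}b_w\}$. I would double-check that the two successive substitutions $v\mapsto v^{-1}$ (one hidden in the coefficients through $j_H$, one through $j_{\tl}$) compose to the identity on the coefficients, which is what restores nonnegativity after the detour through $\theta$.
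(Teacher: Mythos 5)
Your proof is correct and follows essentially the same route as the paper, which deduces the theorem from the relation $\theta=j_{\tl}\circ\theta'\circ j_H$ together with Theorems \ref{thm:positivity}, \ref{thm:fg} and Equality \eqref{eq:cc'}. Your only addition is to make explicit the bookkeeping the paper leaves implicit, namely the identity $a'(\beta)=v^{d(\beta)}a(\beta)$ relating the two lifts and the cancellation of the two semilinear involutions on the coefficients, both of which are handled correctly.
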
 
One can show that there exist total orders on the sets $\DIV(c)$ and $W_f$ such that the base
change from $\{b_w\}_{w\in W_f}$ to $\{Z_x\}_{x\in\DIV(c)}$ is upper triangular. Zinno shows it for
$c=s_1s_2\cdots s_n$ in \cite{Z} and the result is extended in the second author's thesis \cite{GobTh} to
arbitrary $c$. As noticed in \cite{Gob}, for $c=s_1s_2\cdots s_n$, the order given by Zinno can
be refined into the Bruhat order on $\DIV(c)$, which is studied extensively and combinatorially in \cite{GobWil}, where the orders to consider for other choices of Coxeter elements are also described. Closed formulas for some of the coefficients (in the inverse matrix) are given in \cite{Gob} in case the Coxeter element is $c=s_1s_2\cdots s_n$ (which can be obviously adapted for $c=s_n s_{n-1}\cdots s_1$), but we do not have a closed formula for them in full generality.

\begin{remark}
In type $B_n$, Theorem \ref{thm:basetl} also holds (see \cite{Vincenti2}). In that case, the diagram basis of the Temperley-Lieb quotient is not the projection of the canonical basis of the corresponding Iwahori-Hecke algebra (see \cite{FG}), hence there is no analogue of Theorem \ref{thm:positivitetl} in that case. In type $D_n$, the cardinality of $|\bD_c|$ is bigger than the dimension of the Temperley-Lieb algebra, but the images of the elements of $\bD_c$ in it still generate the whole algebra, as shown in \cite{Vincenti}. 
\end{remark}

\end{document}